\title{\textbf{Seiberg--Witten and Gromov invariants for self-dual harmonic 2-forms}}
\author{\Large Chris Gerig}
\patchcmd{\epigraph}{\@epitext{#1}}{\itshape\@epitext{#1}}{}{}
\numberwithin{equation}{section}
\newtheorem{theorem}{Theorem}[section]
\newtheorem{prop}[theorem]{Proposition}
\newtheorem{cor}[theorem]{Corollary}
\newtheorem{lemma}[theorem]{Lemma}
\newtheorem{lemma-definition}[theorem]{Lemma-Definition}
\theoremstyle{definition}
\newtheorem{definition}[theorem]{Definition}
\newtheorem{remark}[theorem]{Remark}
\newtheorem{notation}[theorem]{Notation}
\renewcommand{\SS}{{\mathbb S}}
\renewcommand{\AA}{{\mathbb A}}
\newcommand{\CC}{{\mathbb C}}
\newcommand{\RR}{{\mathbb R}}
\newcommand{\MM}{{\mathbb M}}
\newcommand{\NN}{{\mathbb N}}
\newcommand{\ZZ}{{\mathbb Z}}
\newcommand{\TT}{{\mathbb T}}
\newcommand{\FF}{{\mathbb F}}
\newcommand{\HH}{{\mathbb H}}
\newcommand{\nN}{{\mathcal N}}
\newcommand{\cC}{{\mathcal C}}
\newcommand{\eE}{{\mathcal E}}
\newcommand{\mM}{{\mathcal M}}
\newcommand{\uU}{{\mathcal U}}
\newcommand{\vV}{{\mathcal V}}
\newcommand{\bB}{{\mathcal B}}
\newcommand{\tT}{{\mathcal T}}
\newcommand{\iI}{{\mathcal I}}
\newcommand{\kK}{{\mathcal K}}
\newcommand{\lL}{{\mathcal L}}
\newcommand{\zZ}{{\mathcal Z}}
\newcommand{\rR}{{\mathcal R}}
\newcommand{\yY}{{\mathcal Y}}
\newcommand{\op}{\operatorname}
\newcommand{\Spinc}{\op{Spin}^c}
\newcommand{\Hom}{\op{Hom}}
\newcommand{\End}{\op{End}}
\newcommand{\Conn}{\op{Conn}}
\renewcommand{\ker}{\op{Ker}}
\newcommand{\coker}{\op{Coker}}
\newcommand{\im}{\op{Im}}
\renewcommand{\dbar}{\overline{\partial}}
\newcommand{\cl}{\op{cl}}
\newcommand{\PD}{\op{PD}}
\newcommand{\gr}{\op{gr}}
\newcommand{\dv}{d\text{vol}}
\newcommand{\1}{\mathds{1}}
\newcommand{\e}{\varepsilon}
\newcommand{\ind}{\op{ind}}
\newcommand{\dist}{\op{dist}}
\newcommand{\crit}{\op{crit}}
\newcommand{\grad}{\op{grad}}
\newcommand{\bigtimes}{\raisebox{-1pt}{\text{\LARGE$\times$}}}
\newcommand{\fo}{\mathfrak o}
\newcommand{\fu}{\mathfrak u}
\newcommand{\fv}{\mathfrak v}
\newcommand{\s}{\mathfrak s}
\newcommand{\h}{\mathfrak h}
\renewcommand{\a}{\mathfrak a}
\renewcommand{\b}{\mathfrak b}
\renewcommand{\t}{\mathfrak t}
\newcommand{\p}{\mathfrak p}
\renewcommand{\q}{\mathfrak q}
\newcommand{\C}{\mathfrak C}
\newcommand{\D}{\mathfrak D}
\renewcommand{\d}{\mathfrak d}
\newcommand{\M}{\mathfrak M}
\renewcommand{\c}{\mathfrak c}
\newcommand{\bc}{\boldsymbol\c}
\newcommand{\bA}{\mathbf A}
\newcommand{\Hfrom}{\widehat{\mathit{HM}}}
\newcommand{\Cfrom}{\widehat{\mathit{CM}}}
    \def\Hto{%
       \setbox0=\hbox{$\widehat{\mathit{HM}}$}
       \setbox1=\hbox{$\mathit{HM}$}
       \dimen0=1.1\ht0
       \advance\dimen0 by 1.17\ht1
       \smash{\mskip2mu\raise\dimen0\rlap{%
          \begin{turn}{180}
              {$\widehat{\phantom{\mathit{HM}}}$}
           \end{turn}} \mskip-2mu    
                \mathit{HM}
    }{\vphantom{\widehat{\mathit{HM}}}}{}}
    \def\Cto{%
       \setbox0=\hbox{$\widehat{\mathit{CM}}$}
       \setbox1=\hbox{$\mathit{CM}$}
       \dimen0=1.1\ht0
       \advance\dimen0 by 1.17\ht1
       \smash{\mskip2mu\raise\dimen0\rlap{%
          \begin{turn}{180}
              {$\widehat{\phantom{\mathit{CM}}}$}
           \end{turn}} \mskip-2mu    
                \mathit{CM}
    }{\vphantom{\widehat{\mathit{CM}}}}{}}
\definecolor{blue}{rgb}{0,0,1}
\definecolor{red}{rgb}{1,0,0}
\definecolor{green}{rgb}{0,.7,0}
\begin{document}
\maketitle

\begin{abstract}
This is the sequel to \cite{Gerig:taming} which gives an extension of Taubes' ``SW=Gr'' theorem to non-symplectic 4-manifolds. The main result of this paper asserts the following. Whenever the Seiberg--Witten invariants are defined over a closed minimal 4-manifold $X$, they are equivalent modulo 2 to ``near-symplectic'' Gromov invariants in the presence of certain self-dual harmonic 2-forms on $X$. A version for non-minimal 4-manifolds is also proved. A corollary to Morse theory on 3-manifolds is also announced, recovering a result of Hutchings--Lee--Turaev about the 3-dimensional Seiberg--Witten invariants.
\end{abstract}

\section{Introduction}

\indent\indent
The purpose of this paper is to present a proof of the assertion that a compact 4-manifold has its Seiberg--Witten invariants
equal to its near-symplectic Gromov invariants. The latter are suitable counts of closed and punctured Riemann surfaces (which may be disconnected and multiply covered) in the complement of certain smoothly embedded circles in the 4-manifold. When the 4-manifold is equipped with a symplectic 2-form, this was already known by Taubes \cite{Taubes:SWGrBook}, in which case there are only closed surfaces and no smoothly embedded circles. When the 4-manifold $X$ is not symplectic, we still assume $b^2_+(X)>0$. Then given a generic Riemannian metric on $X$ there are nontrivial closed self-dual (hence harmonic) 2-forms which vanish transversally along a disjoint union of circles in $X$ and are symplectic elsewhere \cite{LuttingerSimpson, Honda:transversality, LeBrun:Yamabe}. Such a 2-form $\omega$ is a \textit{near-symplectic form}, and it is through the use of them that a version of ``$SW=Gr$'' is demonstrated below.

\begin{remark}
We assume the reader is familiar with the previous paper \cite{Gerig:taming}, and when necessary we point the reader to specific locations in it. See its introduction for further motivation of the main theorem of the current paper.
\end{remark}

Throughout this paper, $(X,g)$ denotes a closed connected oriented smooth Riemannian 4-manifold with $b^2_+(X)\ge1$, and $\omega$ denotes a self-dual near-symplectic form on $X$ whose zero set
$$Z:=\omega^{-1}(0)$$
has $N\ge0$ components, all of which are untwisted\footnote{We may also allow components of $Z$ to be twisted zero-circles which are non-contractible in $X$ (see \cite{Gerig:taming}*{Appendix}), but we restrict our attention to untwisted zero-circles for simplicity of notation.} zero-circles (see \cite{Gerig:taming}*{\S1.2} for the notion of (un)twisted zero-circles). Such harmonic 2-forms always exist, but the parity of $N$ must be the same as that of $1-b^1(X)+b^2_+(X)$.

Fix an ordering of the zero-circles of $\omega$. In \cite{Gerig:taming} we defined the near-symplectic Gromov invariants
$$Gr_{X,\omega}:\Spinc(X)\to\Lambda^* H^1(X;\ZZ)$$
in terms of counts of pseudoholomorphic curves in a certain completion of $X-Z$, for which they a priori depend on an almost complex structure on $X-Z$. Here is the main theorem of this paper for $X$ minimal,\footnote{A manifold is \textit{minimal} if there are no \textit{exceptional spheres}, smoothly embedded 2-spheres of self-intersection $-1$.}  from which it follows that $Gr_{X,\omega}$ over $\ZZ/2\ZZ$ are indeed smooth invariants of $X$.

\begin{theorem}
\label{thm:SWGr1}
Given a minimal 4-manifold $(X,\omega)$ as above and any $\s\in\Spinc(X)$,
$$Gr_{X,\omega}(\s)\equiv SW_X(\s){\mod2}\indent\in\Lambda^* H^1(X;\ZZ)\otimes\ZZ/2\ZZ$$
where $SW_X(\s)$ is the Seiberg--Witten invariant defined in Definition~\ref{defn:SW} and $\omega$ determines the chamber for defining the Seiberg--Witten invariants when $b^2_+(X)=1$.
\end{theorem}

When $X$ is not minimal, there is a version of this theorem (see Theorem~\ref{thm:SWGr2}) but it is more involved. In particular, it cannot be demonstrated for all spin-c structures on $X$. This issue also appears in Taubes' Gromov invariants of a closed symplectic 4-manifold, but thanks to the blow-up formula for the Seiberg--Witten invariants (\cite{OS:Thom}*{Theorem 2.2} and \cite{LiLiu:SW=Gr}*{Proposition 4.3}) we may as well suppose that $X$ is minimal.

\begin{remark}
The definition of the Seiberg--Witten invariants over $\ZZ$ requires a choice of homology orientation of $X$. As we will explain in Section~\ref{Homology orientations}, this choice is equivalent to a choice of ordering of the zero-circles of $\omega$ plus a choice of homology orientation of the cobordism obtained from $X$ by removing tubular neighborhoods of the zero-circles. We expect that there is a canonical homology orientation of this cobordism determined by $\omega$, and that Theorem~\ref{thm:SWGr1} can be lifted to $\ZZ$ coefficients.
\end{remark}

\subsection{Outline of remainder of paper}

\indent\indent
What follows is a brief outline of the remainder of the paper. Sections~\ref{Taubes' map} and~\ref{Construction of the near-symplectic Gromov invariants} recall the near-symplectic Gromov invariants and explain how they depend on spin-c structures on $X$. Section~\ref{$S^1$-valued Morse theory} introduces a small application of the main theorem in dimension three, concerning Morse theory, which may help the reader see what near-symplectic forms look like and what the near-symplectic Gromov invariants count.

Section~\ref{Review of gauge theory} consists of a review of those aspects of Seiberg--Witten theory that are relevant to this paper. This concerns the Seiberg--Witten equations and corresponding Floer homologies on 3- and 4-manifolds, either with symplectic/contact forms (following the work of Taubes) or without that additional structure (following the work of Kronheimer--Mrowka). Next, in Section~\ref{Review of Taubes' isomorphisms} we review Taubes' isomorphisms between embedded contact homology and monopole Floer homology. They will be mimicked and generalized in Section~\ref{Equating ECH and HM cobordism counts} to relate pseudoholomorphic curves with Seiberg--Witten solutions on the symplectic cobordism $(X-\nN,\omega)$, where $\nN$ is a certain tubular neighborhood of $\omega^{-1}(0)$. The story here is complicated by the existence of multiply covered tori and planes, but resolved using the methodology of Taubes' proof of ``$SW=Gr$'' for closed symplectic 4-manifolds.

Finally, in Section~\ref{Relation of Seiberg--Witten counts} we complete the proof of Theorem~\ref{thm:SWGr1} by relating Seiberg--Witten solutions on $X-\nN$ with Seiberg--Witten solutions on $X$, via a familiar ``stretching the neck'' procedure.

\subsection{Taubes' map}
\label{Taubes' map}

\indent\indent
We are about to explain the construction of $Gr_{X,\omega}$, as it will nicely set the stage for the rest of the this paper. Let $\nN$ denote the union of arbitrarily small tubular neighborhoods of the components of $Z\subset X$, chosen in such a way that the complement
$$(X_0,\omega):=(X-\nN,\omega|_{X-\nN})$$
is a symplectic manifold with contact-type boundary, where each (negative) boundary component is a copy of $(S^1\times S^2,\xi_0)$. Here, $\xi_0$ is an overtwisted contact structure whose contact form $\lambda_0$ is specified in \cite{Gerig:taming}*{\S3.1}.

Now, $\omega$ determines the canonical bundle $K\to X_0$ and induces an $H_2(X;\ZZ)$-equivariant map
\begin{equation}
\label{eqn:map}
\tau_\omega:\Spinc(X)\to H_2(X_0,\partial X_0;\ZZ),\indent\s\mapsto\PD(c_1(E))
\end{equation}
where $E\to X_0$ is the complex line bundle that defines the decomposition of the positive spinor bundle associated with the restricted spin-c structure $\s|_{X_0}$,
$$\SS_+(\s|_{X_0})=E\oplus K^{-1}E$$
See the upcoming Section~\ref{Symplectic cobordisms} for an elaboration. The following lemma shows that $\tau_\omega$ gives a canonical identification between $\Spinc(X)$ and the set
$$\Big\{ A\in H_2(X_0,\partial X_0;\ZZ)\;\big\rvert\;\partial A=\1\in H_1(\partial X_0;\ZZ)\Big\}$$
where $\1$ is the oriented generator on each component (see \cite{Gerig:taming}*{\S3.1} for orientation conventions).

\begin{lemma}
\label{lem:map}
The map $\tau_\omega$ is injective, and its image consists of the subset of relative homology classes whose boundary is the oriented generator of $H_1(\partial X_0;\ZZ)$, i.e. for each $\s$ on $X$
$$\partial\tau_\omega(\s)=-(1,\ldots,1)\in-\bigoplus_{k=1}^NH_1(S^1\times S^2;\ZZ)$$
\end{lemma}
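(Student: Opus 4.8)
The plan is to separate the formal content of the lemma — injectivity of $\tau_\omega$ and the fact that its image is a single fiber of the connecting homomorphism $\partial\colon H_2(X_0,\partial X_0;\ZZ)\to H_1(\partial X_0;\ZZ)$ — from the one genuinely geometric ingredient, a first-Chern-class computation on a single boundary component. The starting point is the difference formula for $\tau_\omega$: if $\s'=\s+\alpha$ with $\alpha\in H^2(X;\ZZ)$, then $\SS_+(\s'|_{X_0})$ is $\SS_+(\s|_{X_0})$ tensored by the line bundle with $c_1=\alpha|_{X_0}$, so $c_1(E')=c_1(E)+\alpha|_{X_0}$ and hence $\tau_\omega(\s')-\tau_\omega(\s)=\PD(\alpha|_{X_0})$. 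Consequently $\tau_\omega$ is injective exactly when $\PD\circ(\text{restriction})\colon H^2(X;\ZZ)\to H_2(X_0,\partial X_0;\ZZ)$ is injective; here $\PD\colon H^2(X_0;\ZZ)\to H_2(X_0,\partial X_0;\ZZ)$ is a Lefschetz-duality isomorphism, and $H^2(X;\ZZ)\to H^2(X_0;\ZZ)$ is injective because its kernel is the image of $H^2(X,X_0;\ZZ)$, which vanishes: by excision and Lefschetz duality $H^2(X,X_0;\ZZ)\cong H^2(\nN,\partial\nN;\ZZ)\cong H_2(\nN;\ZZ)=0$, as $\nN$ retracts onto the circles $Z$.

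Next I would show the image of $\tau_\omega$ is a single fiber of $\partial$. The difference formula already shows $\partial\tau_\omega(\s)$ is independent of $\s$: it changes by $\partial\PD(\alpha|_{X_0})$, which under the Lefschetz-duality square equals $\pm\PD_{\partial X_0}(\alpha|_{\partial X_0})$, and $\alpha|_{\partial X_0}=0$ for every $\alpha\in H^2(X;\ZZ)$ because each linking $2$-sphere $\{pt\}\times S^2\subset\partial\nN$ bounds a co-core disk in $\nN\subset X$ and is therefore null-homologous in $X$. So the image of $\tau_\omega$ is one coset of $\im\big(\PD\circ\text{restriction}\big)$, and that coset is all of $\ker\partial=\im\big(H_2(X_0;\ZZ)\to H_2(X_0,\partial X_0;\ZZ)\big)$ because $\im\big(H^2(X;\ZZ)\to H^2(X_0;\ZZ)\big)=\ker\big(H^2(X_0;\ZZ)\to H^2(\partial X_0;\ZZ)\big)$ — one inclusion is again the linking-sphere vanishing, the other follows from the map of pairs $(X_0,\partial X_0)\hookrightarrow(X,\nN)$ together with the excision isomorphism $H^2(X,\nN;\ZZ)\cong H^2(X_0,\partial X_0;\ZZ)$. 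Thus the whole lemma reduces to evaluating $\partial\tau_\omega(\s)$ for a single $\s$.

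Applying the Lefschetz-duality square once more, $\partial\tau_\omega(\s)=\pm\PD_{\partial X_0}\big(c_1(E)|_{\partial X_0}\big)$, so I only need $c_1(E)$ on each boundary component $Y_k=S^1\times S^2$. From $\SS_+(\s|_{X_0})=E\oplus K^{-1}E$ one gets $2c_1(E)=c_1(\s)|_{X_0}+c_1(K)$; restricting to $Y_k$ kills $c_1(\s)|_{Y_k}$ (it is the restriction of a class pulled back from $X$), so $c_1(E)|_{Y_k}=\tfrac12 c_1(K)|_{Y_k}$. Near the contact-type boundary $K$ restricts to the canonical bundle $\xi_0^{\ast}$ of the contact structure $\xi_0$ fixed in \cite{Gerig:taming}*{\S3.1}, hence $c_1(E)|_{Y_k}=-\tfrac12 c_1(\xi_0)$; plugging in the value $c_1(\xi_0)=\pm 2\cdot(\text{generator of }H^2(S^1\times S^2;\ZZ))$ recorded there shows $c_1(E)|_{Y_k}$ is a generator, and carefully tracking the orientation conventions — the co-orientation of $\xi_0$ and its $c_1$ as computed in \cite{Gerig:taming}, the direction of $\partial$, the sign in the Lefschetz square, and the negative boundary orientation built into the symbol $\1$ — identifies $\partial\tau_\omega(\s)$ as exactly $-(1,\ldots,1)$. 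Alternatively one can sidestep $\xi_0$ and read $c_1(E)|_{Y_k}$ off Taubes' normal form for $\omega$ near a zero-circle, where the twisting of the almost complex structure around the circle forces $E$ to have degree $\mp 1$ on the linking sphere.

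The main obstacle will be precisely this final sign bookkeeping: arranging the co-orientation of $\xi_0$, the connecting homomorphism, the Lefschetz-duality sign, and the ``negative boundary'' convention so that the answer comes out $-(1,\ldots,1)$ and not $+(1,\ldots,1)$. The only input that is not formal algebraic topology is the near-symplectic local model (equivalently, the value of $c_1(\xi_0)$); everything else — injectivity and the identification of the image with a full fiber of $\partial$ — uses nothing beyond $\nN\simeq Z$ and the fact that linking spheres bound co-core disks.
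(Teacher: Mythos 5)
Your proof is correct and follows essentially the same route as the paper's: both obtain injectivity from $H^2(X,X_0;\ZZ)\cong H_2(\operatorname{cl}\nN;\ZZ)=0$ via excision and Lefschetz duality, and both compute the boundary class from the triviality of $c_1(\s)|_{\partial X_0}$ together with the spinor decomposition $\SS_+=E\oplus K^{-1}E$ and the fact that $c_1(\xi_0)=-2$. One small remark: you go further than the paper in one respect — you explicitly verify that the image of $\tau_\omega$ is the \emph{entire} fiber of $\partial$ over $-(1,\ldots,1)$, via the Mayer–Vietoris/excision identification $\im\big(H^2(X)\to H^2(X_0)\big)=\ker\big(H^2(X_0)\to H^2(\partial X_0)\big)$; the paper's proof shows injectivity and that the boundary class is as claimed, but leaves the surjectivity-onto-the-fiber step implicit. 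Your version is therefore slightly more complete on that point, at the cost of the sign bookkeeping you flag, which the paper dispatches by simply quoting $c_1(\xi_0)=-2$ with its fixed orientation conventions from \cite{Gerig:taming}*{\S3.1}.
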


\begin{proof}
The restriction map $\Spinc(X)\to\Spinc(X_0)$ is injective, or in terms of the cohomology actions, the restriction map $H^2(X;\ZZ)\to H^2(X_0;\ZZ)$ is injective. This follows from the cohomological long exact sequence applied to the pair $(X,X_0)$ because
$$H^2(X,X_0;\ZZ)\cong H^2(\operatorname{cl}\nN,\partial\nN;\ZZ)\cong H_2(\operatorname{cl}\nN;\ZZ)= 0$$
using excision and Poincar\'e--Lefschetz duality. Then $\tau_\omega$ is injective, since
$$\tau_\omega(\s\otimes E)-\tau_\omega(\s)= \PD c_1(E|_{X_0})$$
for any complex line bundle $E\to X$.

Likewise, the determinant line bundle $\det\SS_+(\s|_{\partial X_0})$ is trivial. In terms of the cohomology actions, the restriction map $H^2(X;\ZZ)\to H^2(\partial X_0;\ZZ)$ is trivial because it factors through $H^2(\operatorname{cl}\nN;\ZZ)=0$. On each boundary component, this constraint
$$0=c_1\big(\det\SS_+(\s|_{S^1\times S^2})\big)=2c_1(E|_{S^1\times S^2})+c_1(K^{-1}|_{S^1\times S^2})$$
implies
$$c_1(E|_{S^1\times S^2})=1\in\ZZ\cong H^2(S^1\times S^2;\ZZ)$$
because $K^{-1}|_{S^1\times S^2}=\xi_0$ with Euler class $-2$ (see \cite{Gerig:taming}*{\S3.1}).
\end{proof}

\subsection{Construction of the near-symplectic Gromov invariants}
\label{Construction of the near-symplectic Gromov invariants}

\indent\indent
We now fix a spin-c structure $\s\in\Spinc(X)$ and choose $\nN$ so that $-\partial X_0=\partial\nN$ is a contact 3-manifold whose contact form is $\lambda_\s$ on each component, an $\s$-dependent rescaling of $\lambda_0$ as provided in \cite{Gerig:taming}*{Lemma 3.9} such that $\ker\lambda_\s=\xi_0$. A key property of $\lambda_\s$, spelled out in \cite{Gerig:taming}*{\S3.2}, is that there is a positive real number\footnote{The definition of this real number is given by \cite{Gerig:taming}*{Equation 3-6} with the notation $\rho(\tau_\omega(\s))$ there.}
$$\rho_\s\in\RR$$
for which all of its Reeb orbits of symplectic action less than $\rho_\s$ are $\rho_\s$-flat (we also say that $\lambda_\s$ itself is $\rho_\s$-flat). The notion of ``$L$-flatness'' for a given positive real number $L$ is defined in \cite{Taubes:ECH=SWF1}*{\S2.d} and reviewed in \cite{Gerig:taming}*{\S2.5}, while its importance is revealed later in Section~\ref{Maps between ECH and HM} (see Theorem~\ref{thm:generators}, in particular).

Define the integer
\begin{equation}
\label{eqn:dim}
d(\s):=\frac{1}{4}\left(c_1(\s)^2-2\chi(X)-3\sigma(X)\right)
\end{equation}
where $c_1(\s)$ denotes the first Chern class of the spin-c structure's positive spinor bundle, $\chi(X)$ denotes the Euler characteristic of $X$, and $\sigma(X)$ denotes the signature of $X$. Introduce the set
$$\eE_\omega\subset H_2(X_0,\partial X_0;\ZZ)$$
of classes represented by symplectically embedded 2-spheres of self-intersection $-1$ in $X_0$ (which is empty if $X$ is minimal).

\bigskip
We now summarize the definition of $Gr_{X,\omega}(\s)$ given by \cite{Gerig:taming}*{Definition 1.8}. The component of the element $Gr_{X,\omega}(\s)$ in $\Lambda^pH^1(X;\ZZ)$ is defined to be zero whenever $d(\s)-p$ is odd or negative, or whenever $E\cdot\tau_\omega(\s)<-1$ for some $E\in\eE_\omega$ (this latter condition is independent of $p$). In the remaining cases it is determined by its evaluation on $[\eta_1]\wedge\cdots\wedge[\eta_p]$ for a given ordered set of classes $[\bar\eta]:=\big\{[\eta_i]\big\}_{i=1}^p\subset H_1(X;\ZZ)/\operatorname{Torsion}$. To specify this evaluation, fix the following data:
\bigskip

	$\bullet$ an ordered set of $p$ disjoint oriented loops $\bar\eta\subset X_0$ which represent $[\bar\eta]$,
	
	$\bullet$ a set of $\frac{1}{2}\big(d(\s)-p\big)$ disjoint points $\bar z:=\lbrace z_k\rbrace\subset X_0-\bar\eta$,
		
	$\bullet$ a cobordism-admissible almost complex structure $J$ on the completion $(\overline X_0,\omega)$ of $(X_0,\omega)$.

\bigskip\noindent
See \cite{Gerig:taming}*{\S2.2} for the terminology in the third bullet; the completion $\overline X_0$ is obtained by attaching certain cylindrical ends to the strong symplectic cobordism $(X_0,\omega):(\varnothing,0)\to(-\partial X_0,\lambda_\s)$. Given an admissible orbit set $\Theta$ on $(-\partial X_0,\lambda_\s)$ (in the sense of \cite{Gerig:taming}*{\S2.1}) such that its total homology class is $[\Theta]=-\partial\tau_\omega(\s)$, we form the moduli space
$$\mM_{d(\s)}(\varnothing,\Theta;\tau_\omega(\s),\bar z,\bar\eta)$$
of $J$-holomorphic currents in $\overline X_0$ that satisfy the following properties: they are asymptotic to $\Theta$ in the sense of \cite{Gerig:taming}*{\S2.2}; they have ECH index $d(\s)$ in the sense of \cite{Gerig:taming}*{\S2.3}; they represent $\tau_\omega(\s)$; they intersect every point and loop in $\bar z\cup\bar\eta$. Thanks to \cite{Gerig:taming}*{Proposition 3.16}, a component of a given current in this moduli space can only be multiply covered if it is a special curve, defined as follows.

\begin{definition}
A $J$-holomorphic curve in $\overline X_0$ is \textit{special} if it has Fredholm/ECH index zero, and is either an embedded torus or an embedded plane whose negative end is asymptotic to an embedded elliptic orbit with multiplicity one.
\end{definition}

It turns out that $\mM_{d(\s)}(\varnothing,\Theta;\tau_\omega(\s),\bar z,\bar\eta)$ is a finite set for generic $J$ (see \cite{Gerig:taming}*{Proposition 3.13}) and the symplectic action of $\Theta$ is bounded by $\rho_\s$ if this moduli space is nonempty (see \cite{Gerig:taming}*{\S3.2} or \cite{Hutchings:fieldtheory}). Since there can only be finitely many (nondegenerate) orbit sets with uniformly bounded action, there are only finitely many nonempty moduli spaces $\mM_{d(\s)}(\varnothing,\Theta;\tau_\omega(\s),\bar z,\bar\eta)$ indexed by $\Theta$. Moreover, any orbit set $\Theta$ for which the corresponding moduli space is nonempty has an absolute grading as generators of the ECH chain complex $ECC_*(-\partial X_0,\lambda_\s,1)$ (see \cite{Gerig:taming}*{\S2.4}) and they are in fact all of the same grading (see \cite{Gerig:taming}*{\S3.7}). Denote this grading by $g(\s)$.

As a further reminder, each orbit set $\Theta$ comes equipped with a choice of orientation so that the moduli spaces are all coherently oriented (see \cite{Gerig:taming}*{\S2.1, \S3.5}). The result of \cite{Gerig:taming}*{\S3.6}\footnote{This is an opportune moment to point out a negligible falsity in the proof of \cite{Gerig:taming}*{Proposition 3.23}; delete the paranthetical phrase ``(respectively cylinder)''.} is the following element in the ECH chain complex, whose homology class does not depend on the choice of points $\bar z$ nor representatives $\bar\eta$ of $[\bar\eta]$.

\begin{definition}
\label{def:GromovCycle}
With the above setup, the \textit{Gromov cycle} is
\begin{equation}
\Phi_{Gr}:=\sum_{\Theta}\mM_\Theta\Theta\in ECC_{g(\s)}(-\partial X_0,\lambda_\s,1)
\end{equation}
where the sum is over admissible orbit sets $\Theta$ in the grading $g(\s)$ with $[\Theta]=-\partial\tau_\omega(\s)$, and the integer coefficient $\mM_\Theta\in\ZZ$ is the sum over $\cC\in\mM_{d(\s)}(\varnothing,\Theta;\tau_\omega(\s),\bar z,\bar\eta)$ of a certain weight $q(\cC)\in\ZZ$ defined in \cite{Gerig:taming}*{\S3.5}. 
\end{definition}

Now, $ECH_*(-\partial X_0,\xi_0,1)$ is the tensor product $\bigotimes^N_{k=1}ECH_*(S^1\times S^2,\xi_0,1)$ and this homology is given by \cite{Gerig:taming}*{Proposition 3.2}. In terms of the absolute grading on $ECH_*(S^1\times S^2,\xi_0,1)$ by homotopy classes of oriented 2-plane fields on $S^1\times S^2$ (see \cite{Gerig:taming}*{\S2.4}), there is a unique grading $[\xi_*]$ such that
\begin{equation*}
    ECH_{[\xi_*]+n}(S^1\times S^2,\xi_0,1)\cong
    \begin{cases}
      0, & \text{if}\ n<0 \\
      \ZZ, & \text{otherwise}
    \end{cases}
\end{equation*}
In the proof of Theorem~\ref{thm:stretchNeck} it will be shown that $g(\s)=N[\xi_*]$. That said, $Gr_{X,\omega}(\s)\big([\eta_1]\wedge\cdots\wedge[\eta_p]\big)$ is by definition the coefficient of the class
$$[\Phi_{Gr}]\in ECH_{g(\s)}(-\partial X_0,\xi_0,1)\cong \bigotimes^N_{k=1} ECH_{[\xi_*]}(S^1\times S^2,\xi_0,1)\cong\ZZ$$
as a multiple of the positive generator $\1\in \bigotimes^N_{k=1} ECH_{[\xi_*]}(S^1\times S^2,\xi_0,1)$.

\subsection{Exceptional spheres}
\label{Exceptional spheres}

\indent\indent
If $E\cdot \tau_\omega(\s)\ge-1$ for all $E\in\eE_\omega$ then the moduli spaces used to define $Gr_{X,\omega}(\s)$ do not contain any elements with multiply covered exceptional sphere components (see \cite{Gerig:taming}*{Proposition 3.16}). However, if $E\cdot \tau_\omega(\s)<-1$ for some $E\in\eE_\omega$ then we cannot rule out their existence, and we define the near-symplectic Gromov invariant to be zero, as is done with Taubes' Gromov invariants of non-minimal symplectic 4-manifolds. What follows is a more general version of the main Theorem~\ref{thm:SWGr1} which does not assume $X$ to be minimal; its proof will subsume the proof of Theorem~\ref{thm:SWGr1}.

\begin{theorem}
\label{thm:SWGr2}
Given $(X,\omega,J)$ and $\s\in\Spinc(X)$ satisfying $E\cdot\tau_\omega(\s)\ge-1$ for all $E\in\eE_\omega$,
$$Gr_{X,\omega}(\s)\equiv SW_X(\s){\mod2}\indent\in\Lambda^* H^1(X;\ZZ)\otimes\ZZ/2\ZZ$$
where $\omega$ determines the chamber for defining the Seiberg--Witten invariants when $b^2_+(X)=1$.
\end{theorem}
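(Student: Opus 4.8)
The plan is to follow the two-step factorization that the outline already advertises: first relate the near-symplectic Gromov invariant $Gr_{X,\omega}(\s)$ to a Seiberg-Witten count on the symplectic cobordism $(X_0,\omega)=(X-\nN,\omega)$, and then relate that cobordism count to the ordinary Seiberg-Witten invariant $SW_X(\s)$ by a neck-stretching argument across $\partial\nN=-\partial X_0$. The first step is the heart of the matter and is carried out by mimicking Taubes' isomorphism $ECH\cong\widehat{HM}$ in the cobordism setting, the second step is the more standard gluing/compactness analysis.

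For the first step, I would proceed as follows. The Gromov cycle $\Phi_{Gr}\in ECC_{g(\s)}(-\partial X_0,\lambda_\s,1)$ of Definition~\ref{def:GromovCycle} counts, with weights $q(\cC)$, the $J$-holomorphic currents in $\overline X$ of ECH index $d(\s)$ representing $\tau_\omega(\s)$ and passing through $\bar z\cup\bar\eta$. Taubes' technology converts such a current, after perturbing the Seiberg-Witten equations by a large multiple $r$ of $\omega$ and adding the abstract perturbations needed for transversality, into a solution of the Seiberg-Witten equations on the cobordism $(X_0,\omega)$ with a prescribed asymptotic end over $(-\partial X_0,\lambda_\s)$; conversely, as $r\to\infty$, the zero set of the $E$-component of the spinor of such a solution converges to such a current. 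The subtlety flagged in the outline — multiply covered tori and planes (the ``special'' curves) — is exactly the phenomenon Taubes handled in $SW=Gr$ for closed symplectic 4-manifolds: the correspondence between currents and gauge-theoretic solutions is not one-to-one for these components, and the weights $q(\cC)$ are designed precisely so that the signed/mod-2 count matches the Seiberg-Witten count. I would invoke the $L$-flatness of $\lambda_\s$ (with $L=\rho(\tau_\omega(\s))$, larger than the action of any relevant orbit set) to identify the ECH chain complex generators with the appropriate Seiberg-Witten solutions on the ends, so that $[\Phi_{Gr}]$ corresponds under Taubes' isomorphism to a well-defined class in the monopole Floer homology of $-\partial X_0$, and the cobordism map induced by $(X_0,\omega)$ sends the canonical generator to (the image of) $SW_X(\s)$. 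Working mod 2 throughout avoids the orientation/sign bookkeeping, which is why the theorem is only asserted over $\ZZ/2\ZZ$.

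For the second step, I would remove the tubular neighborhoods $\nN$ of the zero-circles and insert a long cylindrical neck $[-T,T]\times(-\partial X_0)$, then let $T\to\infty$. On the $X_0$ side one gets Seiberg-Witten solutions on the symplectic cobordism as above; on the $\nN$ side, which retracts onto the zero-circles, one must show that the relevant solutions are ``trivial'' in a suitable sense — that the SW moduli space on $X$ factors as the cobordism count glued to a unique contribution from the neighborhoods of $Z$. This is where the computation $g(\s)=N[\xi_\ast]$ and the structure $ECH_\ast(S^1\times S^2,\xi_0,1)\cong\ZZ$ in the stable grading enter: the grading identity forces the Gromov cycle to live in the one-dimensional stable part, and the neck-stretching identifies the coefficient of the positive generator $\1$ with $SW_X(\s)$. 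The non-minimal case ($\eE_\omega\neq\varnothing$) is handled by the hypothesis $E\cdot\tau_\omega(\s)\ge-1$, which by \cite{Gerig:taming}*{Proposition 3.16} rules out multiply covered exceptional spheres in the moduli spaces, so the same argument goes through; when $b_+^2(X)=1$ the form $\omega$ pins down the chamber because its self-dual harmonic representative is exactly what the perturbed SW equations on $X$ see in the large-$r$ limit.

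The main obstacle, as in Taubes' original work, is the multiply covered special curves: establishing that the gauge-theoretic count near such a component equals the prescribed weight $q(\cC)$ requires a delicate local analysis of the Seiberg-Witten solutions near an embedded torus of square zero or an embedded plane asymptotic to a multiplicity-one elliptic orbit, including the spectral-flow/obstruction-bundle computation that produces the correction. Everything else — the large-$r$ convergence of SW solutions to $J$-holomorphic currents, the compactness of the relevant moduli spaces, and the neck-stretching gluing — is by now standard, but this piece is genuinely the place where the proof of Theorem~\ref{thm:SWGr2} must do real work beyond citing \cite{Taubes:SWGrBook} and \cite{Gerig:taming}.
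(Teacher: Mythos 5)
Your proposal follows the same two-step factorization that the paper actually carries out: Theorem~\ref{thm:moduli} establishes the mod-2 equality of curve counts and SW-instanton counts on the cobordism $(X_0,\omega)$ via the Kuranishi-structure analysis of special curves, and Theorem~\ref{thm:stretchNeck} performs the neck-stretching with Lemma~\ref{lem:Nmoduli} supplying the unique contribution from the positive-scalar-curvature pieces $S^1\times B^3$. You correctly identify the multiply covered special curves as the genuinely new analytic content, so the approach matches the paper in all essential respects.
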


\subsection{$S^1$-valued Morse theory}
\label{$S^1$-valued Morse theory}

\indent\indent
A basic example of a near-symplectic manifold is $(S^1\times M,\omega_f)$, where $M$ is a closed oriented Riemannian 3-manifold with $b^1(M)>0$, the metric on $S^1\times M$ is the product metric $dt^2+g_M$, and $\omega_f$ is defined momentarily. A result of Honda \cite{Honda:Morse} and of Calabi \cite{Calabi:intrinsic} says that for $g_M$ suitably generic, any nonzero class in $H^1(M;\ZZ)$ is represented by a harmonic map $f:M\to S^1$ (i.e. $d^* df=0$) with nondegenerate critical points $\crit(f)$ of index 1 or 2, hence a harmonic 1-form $df$ with transversal zeros. Then
$$\omega_f:=dt\wedge df+*_3df$$
is a closed self-dual 2-form which vanishes transversally on
$$Z_f:=S^1\times\crit(f)$$
All zero-circles are untwisted, as can be seen by writing out $\omega_f$ in local coordinates and comparing to the standard model on $\RR\times\RR^3$. There are an even number of zero-circles, i.e. $1-b^1(X)+b^2_+(X)$ is even, because $b^1(X)=b^1(M)+1$ and $b^2(X)=2b^1(M)$ and $b^2_+(X)=b^1(M)$. Here, we note that
$$H^1(M;\RR) \to H^2_+(X;\RR)\,,\indent a \mapsto [dt \wedge a]^+=\frac{1}{2}(dt\wedge a+*_3a)$$
is an isomorphism.

After equipping $(S^1\times M)-Z_f=S^1\times\big(M-\crit(f)\big)$ with the compatible almost complex structure $J$ determined by $\omega_f$ and $dt^2+g_M$, the $S^1$-invariant connected $J$-holomorphic submanifolds are of the form $C=S^1\times \gamma$, where $\gamma$ is a single gradient flowline of $\nabla f$. Since the Morse trajectories in $M$ are either periodic orbits (with periodicity) or paths between critical points, $C$ is either a torus (with multiplicity) or a cylinder which bounds two zero-circles in $S^1\times M$ (see Figure~\ref{fig:3dim}).

\begin{figure}
    \centering
    \includegraphics[width=5cm]{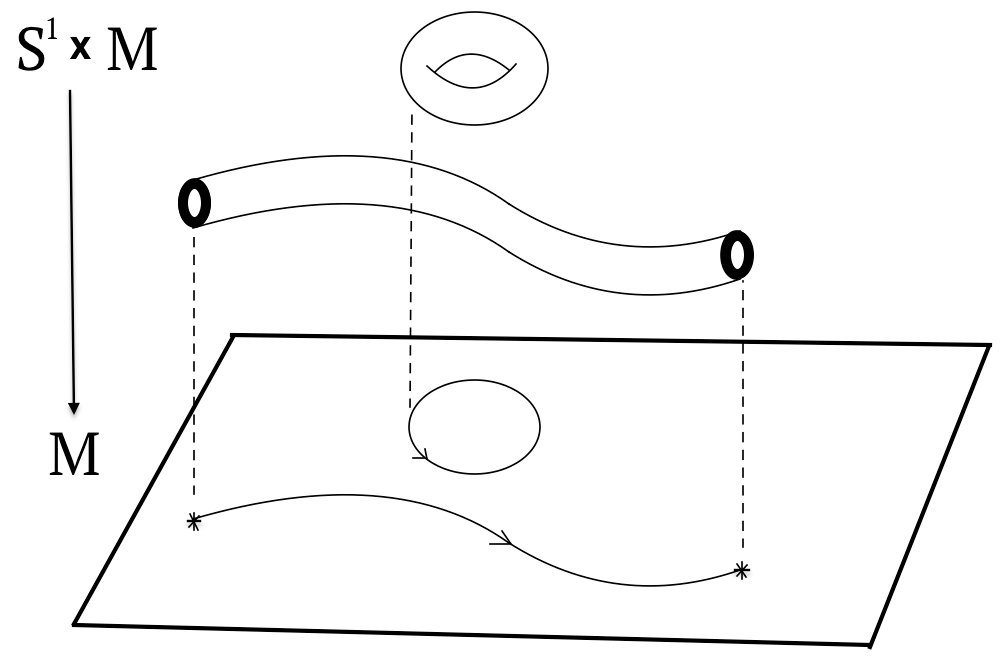}
    \caption{Zero set of near-symplectic form in bold}
    \label{fig:3dim}
\end{figure}

In their PhD theses, Hutchings and Lee built a 3-dimensional invariant $I_{M,f}$ of $M$ which suitably counts the gradient flowlines (see \cite{HutchingsLee1,HutchingsLee2,Hutchings:thesis}), and they further showed that it equals a version of topological (Reidemeister) torsion defined by Turaev \cite{Turaev:torsion}. It was subsequently shown by Turaev that this Reidemeister torsion equals the 3-dimensional SW invariant $SW_M$ of $M$ (see \cite{Turaev:SW}). Strictly speaking, there is a required choice of ``chamber'' (determined by $f$) with which to define $SW_M$ when $b^1(M)=1$, and an ordering of the set $\crit(f)$ with which to define $I_{M,f}$. While $SW_M$ is a function of the set $\Spinc(M)$ of spin-c structures, $I_{M,f}$ is a function of the set
$$\big\{ \eta\in H_1(M,\crit(f);\ZZ)\;|\;\partial\eta=[\crit(f)]\big\}$$
and there exists an $H_1(M)$-equivariant isomorphism $\tau_f$ between them (see \cite{HutchingsLee1}*{Lemma 4.3}). In summary,

\begin{theorem}[Hutchings--Lee--Turaev]
\label{thm:HLT}
Let $(M,f)$ be as above. Then for all $\s\in\Spinc(M)$,
$$I_{M,f}\big(\tau_f(\s)\big)=\pm SW_M(\s)\in\ZZ$$
where $f$ determines the chamber for defining $SW_M$ when $b^1(M)=1$. The global $\pm$ sign is pinned down by a suitable choice of ordering of $\crit(f)$.
\end{theorem}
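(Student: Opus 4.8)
The plan is to obtain Theorem~\ref{thm:HLT} as the three-dimensional shadow of Theorem~\ref{thm:SWGr1} applied to the four-manifold $X = S^1 \times M$ with the near-symplectic form $\omega = \omega_f$ described above. The first step is to identify the two sides of the conjectured equality with their four-dimensional counterparts. On the Seiberg-Witten side, one uses the dimensional-reduction principle: for the product metric $dt^2 + g_M$ and an $S^1$-invariant perturbation, the four-dimensional Seiberg-Witten equations on $S^1 \times M$ reduce to the three-dimensional Seiberg-Witten equations on $M$, and the corresponding invariants agree under the pullback map $\Spinc(M) \to \Spinc(S^1\times M)$; this is standard (see Meng-Taubes and the literature on $SW_M$). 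One must check that the chamber determined by $\omega_f$ when $b^2_+(X) = b^1(M) = 1$ corresponds to the chamber determined by $f$ on the $M$ side, which follows from the explicit formula $[\omega_f]^+ = [dt \wedge df]^+$ and the isomorphism $H^1(M;\RR) \xrightarrow{\sim} H^2_+(X;\RR)$ recalled in the excerpt.

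The second step is to identify $Gr_{X,\omega_f}$ with Hutchings-Lee's invariant $I_{M,f}$. Here the $S^1$-invariance is crucial: as noted in the text, with the induced $J$ the $S^1$-invariant $J$-holomorphic submanifolds of $(S^1\times M)-Z_f$ are exactly products $S^1 \times \gamma$ with $\gamma$ a gradient flowline of $\nabla f$, the tori corresponding to closed orbits and the cylinders to flow lines between critical points. One needs to argue that for generic $S^1$-invariant $J$ every current in the relevant moduli space is $S^1$-invariant (an adaptation of the standard transversality-plus-symmetry argument, or a Taubes-type "$SW \Rightarrow Gr$" compactness argument exploiting that the Seiberg-Witten solutions are themselves $S^1$-invariant after reduction), so that the count defining $Gr_{X,\omega_f}(\s)$ on $\Lambda^p H^1(X)$ reduces to Hutchings-Lee's weighted count of flow lines on $M$ defining $I_{M,f}$ on $\Lambda^p H^1(M)$ — the extra $S^1$-factor in $H^1(X) = H^1(M) \oplus H^1(S^1)$ being absorbed into the dimension/grading bookkeeping via $d(\s)$. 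One also checks that the index formula $d(\s)$ on $X$ matches the dimension count in $I_{M,f}$, and that $\tau_{\omega_f}$ restricted to pulled-back spin-c structures agrees with $\tau_f$ under the identification $\Rel_f(M) \cong \Rel_\omega(X)$ induced by crossing with $S^1$; this last point is essentially \cite{HutchingsLee1}*{Lemma 4.3} compared with Lemma~\ref{lem:map}.

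Combining these two identifications with Theorem~\ref{thm:SWGr1}, which gives $Gr_{X,\omega_f}(\s) = SW_X(\s)$ mod $2$, yields $I_{M,f}(\tau_f(\s)) = SW_M(\s)$ mod $2$. To upgrade this to the integral statement $I_{M,f}(\tau_f(\s)) = \pm SW_M(\s) \in \ZZ$, one cites that both sides are already known to equal Turaev's Reidemeister torsion over $\ZZ$ (Hutchings-Lee for the left side, Turaev for the right side) — so the mod $2$ reduction coming from our theorem is merely a consistency check, and the theorem is "recovered" rather than reproved over $\ZZ$; alternatively, if one wants a genuinely new proof, one tracks orientations through the $S^1$-invariant reduction, which forces the choice of homology orientation of $X$ and ordering of zero-circles to descend to a homology orientation of $M$ and an ordering of $\crit(f)$, pinning down the global sign.

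The main obstacle I expect is the second step: proving that all relevant $J$-holomorphic currents on $S^1 \times M$ are $S^1$-invariant, and that the weights $q(\cC)$ used to define the Gromov cycle match Hutchings-Lee's multiplicities for multiply-covered tori. Multiply covered tori are precisely where Taubes' "$SW=Gr$" story is most delicate (and where the excerpt flags the story as "complicated"), and in the $S^1$-invariant setting these correspond to iterated closed orbits of $\nabla f$, whose contributions Hutchings-Lee handle via a zeta-function-type weighting; reconciling the two conventions, and ensuring no non-invariant curves sneak into the count after stretching the neck, is the technical heart of the argument.
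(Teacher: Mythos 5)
Your proposal correctly identifies both routes to Theorem~\ref{thm:HLT} that appear in the paper, but reverses their emphasis. In the paper, Theorem~\ref{thm:HLT} is stated as a theorem of Hutchings-Lee-Turaev whose proof is the two-step chain of citations immediately preceding it: Hutchings-Lee show $I_{M,f}$ equals a version of Turaev's Reidemeister torsion, and Turaev shows this torsion equals $SW_M$; that chain already delivers the integral statement with signs and is the whole content of what you call the ``fallback'' in your third paragraph. The dimensional reduction you devote most of your effort to is presented by the paper only as a secondary ``recovery over $\ZZ/2\ZZ$'' in Corollary~\ref{cor:Morse}, and crucially, its first equality $I_{M,f}(\s) = Gr_{S^1\times M,\omega_f}(\pi^\ast\s)$ — which requires exactly the $S^1$-invariance and multiply-covered-tori analysis you correctly flag as the technical heart — is deferred to the forthcoming paper \cite{Gerig:Morse} rather than proved here. (The paper's contribution to the chain in Corollary~\ref{cor:Morse} is just the middle congruence, Theorem~\ref{thm:SWGr1}.) So your proposal is correct, but most of its length reconstructs the content of a separate corollary and a not-yet-available reference, while the paper's actual proof of the statement at hand is the short citation argument you relegate to a remark at the end. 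One small correction: the paper cites Okonek-Teleman, not Meng-Taubes, for the statement that the $S^1\times M$ Seiberg-Witten count reduces to the three-dimensional one for product spin-c structures.
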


Now, we can recover Theorem~\ref{thm:HLT} (over $\ZZ/2\ZZ$) without having to pass through Reidemeister torsion, via a dimensional reduction of Theorem~\ref{thm:SWGr1}.\footnote{This was in fact an expectation and also the motivation, as explained in \cite{HutchingsLee1}*{\S4.2.1}.} It was shown in \cite{OkonekTeleman:3SW}*{Theorem 3.5} that the 4-dimensional SW invariant\footnote{There is a canonical homology orientation on 4-manifolds of the form $S^1\times M$.} recovers the 3-dimensional SW invariant: all solutions to the SW equations on $S^1\times M$ associated with product spin-c structures are $S^1$-invariant. It will be shown elsewhere \cite{Gerig:Morse} that the near-symplectic Gromov invariant recovers Hutchings--Lee's flowline invariant.

\begin{cor}[\cite{Gerig:Morse}]
\label{cor:Morse}
Let $(M,f)$ be as above, and let $\pi:S^1\times M\to M$ be the projection map onto the second factor. Fix an ordering of the critical points of $f$ (hence of the zero-circles of $\omega_f)$. Then
$$I_{M,f}(\s)=Gr_{S^1\times M,\omega_f}(\pi^*\s)\equiv_{(2)} SW_{S^1\times M}(\pi^*\s)=SW_M(\s)$$
for all $\s\in\Spinc(M)$. When $b^1(M)=b^2_+(S^1\times M)=1$, the chamber is determined by $f$ (hence $\omega_f$).
\end{cor}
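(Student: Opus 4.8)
The plan is to assemble Corollary~\ref{cor:Morse} from three inputs, two of which are cited and one of which is Theorem~\ref{thm:SWGr1}. First I would invoke the result of Okonek-Teleman \cite{OkonekTeleman:3SW}*{Theorem 3.5}, which identifies $SW_{S^1\times M}(\pi^\ast\s)$ with $SW_M(\s)$: every solution of the four-dimensional Seiberg-Witten equations on $(S^1\times M,dt^2+g_M)$ in a product spin-c structure is $S^1$-invariant, so the four-dimensional moduli space is cut out by the three-dimensional equations and the two invariants agree (over $\ZZ$, hence a fortiori mod $2$). This gives the rightmost equality. Second I would note that $(S^1\times M,\omega_f)$ is precisely a near-symplectic $4$-manifold of the type to which Theorem~\ref{thm:SWGr1} applies: $\omega_f=dt\wedge df+\ast_3 df$ is self-dual and harmonic, its zero set $Z_f=S^1\times\crit(f)$ is a disjoint union of untwisted zero-circles (checked locally against the standard model on $\RR\times\RR^3$, as already recalled in the excerpt), and $S^1\times M$ is minimal since it is aspherical when $b^1(M)>0$ and in any case carries no exceptional spheres (for instance because $S^1\times M$ admits no metric of positive scalar curvature region isolating a $(-1)$-sphere, or more simply because any embedded sphere is null-homotopic and has trivial normal bundle). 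Hence Theorem~\ref{thm:SWGr1} gives $Gr_{S^1\times M,\omega_f}(\pi^\ast\s)\equiv_{(2)} SW_{S^1\times M}(\pi^\ast\s)$, which is the middle congruence; the chamber statement for $b^2_+(S^1\times M)=1$ is inherited directly from Theorem~\ref{thm:SWGr1} since $\omega_f$ selects it.

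Third, and this is the only genuinely new assertion, I would need the identification $I_{M,f}(\s)=Gr_{S^1\times M,\omega_f}(\pi^\ast\s)$ relating Hutchings-Lee's flowline invariant to the near-symplectic Gromov invariant. The approach here is the $S^1$-reduction already sketched in the surrounding text: after equipping $(S^1\times M)-Z_f=S^1\times(M-\crit(f))$ with the $\omega_f$-compatible almost complex structure $J$ coming from the product data, the $S^1$-invariant $J$-holomorphic currents in the completion $\overline{S^1\times M}$ are exactly the products $S^1\times\gamma$ with $\gamma$ a gradient flowline of $\nabla f$ — a multiply-covered torus when $\gamma$ is a closed orbit and a plane/cylinder limiting on zero-circles when $\gamma$ is a flow line between critical points. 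One then checks that under the correspondence $\tau_f$ versus $\tau_{\omega_f}$ of relative homology classes, the ECH-index and point/loop constraints cut out on the Gromov side match the index and incidence conditions defining $I_{M,f}$, and that the weights $q(\cC)$ assigned to special curves in Definition~\ref{def:GromovCycle} reproduce (mod $2$, and in fact the claim is an equality of $\ZZ$-valued invariants) the flowline counts of Hutchings-Lee. Because this last step is substantial — it requires an $S^1$-equivariant transversality/gluing argument and a careful comparison of orientations and multiple-cover contributions — it is deferred to the separate paper \cite{Gerig:Morse}, and the corollary is stated citing that reference; here I would simply present the chain of (in)equalities and point to the three inputs.

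The main obstacle, conceptually, is precisely the step packaged in \cite{Gerig:Morse}: showing that the near-symplectic Gromov count, which is defined via $J$-holomorphic currents for a generic cobordism-admissible $J$ on the completion, can be computed using the non-generic $S^1$-invariant $J$ and that doing so recovers the Morse-theoretic count. The difficulties are the familiar ones from Taubes' and Hutchings' work — the $S^1$-invariant $J$ fails to be generic, so one must either perturb equivariantly and track how the count localizes near the $S^1$-invariant locus, or argue via an obstruction-bundle/cobordism-invariance argument that the equivariant count equals the generic one; in addition the multiply-covered tori over closed Reeb orbits (here, closed flow lines of $\nabla f$) and the planes asymptotic to embedded elliptic orbits require the same weighting bookkeeping that enters Definition~\ref{def:GromovCycle}. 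Everything else in the corollary is bookkeeping: checking that $(S^1\times M,\omega_f)$ satisfies the hypotheses of Theorem~\ref{thm:SWGr1} and quoting Okonek-Teleman. Since the hard analytic content is isolated in \cite{Gerig:Morse}, the proof as presented in this paper is essentially the assembly of the three ingredients described above.
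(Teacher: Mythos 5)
Your proposal follows the paper's own structure essentially verbatim: cite Okonek--Teleman \cite{OkonekTeleman:3SW}*{Theorem 3.5} for the rightmost equality $SW_{S^1\times M}(\pi^\ast\s)=SW_M(\s)$, invoke Theorem~\ref{thm:SWGr1} for the middle congruence after checking that $(S^1\times M,\omega_f)$ satisfies its hypotheses, and defer the leftmost equality $I_{M,f}=Gr_{S^1\times M,\omega_f}\circ\pi^\ast$ to \cite{Gerig:Morse} while recording the $S^1$-reduction sketch. This is exactly how the paper presents the corollary, including the remark that the heavy lifting (non-generic $S^1$-invariant $J$ and the attendant equivariant transversality/obstruction issues) is isolated in the companion paper.

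The one place where your reasoning goes wrong, though the conclusion survives, is the verification that $S^1\times M$ is minimal. It is not true that $S^1\times M$ is aspherical whenever $b^1(M)>0$: take $M=S^1\times S^2$, which has $b^1(M)=1$ yet $\pi_2(S^1\times M)\ne 0$, so the appeal to asphericity (and the ``any embedded sphere is null-homotopic'' fallback, which again presupposes asphericity) does not cover the general case, and the positive-scalar-curvature remark does not constitute an argument. The correct and elementary reason is that the intersection form of any $S^1\times M$ is even, indeed hyperbolic: via the K\"unneth splitting $H_2(S^1\times M;\QQ)\cong H_2(M;\QQ)\oplus H_1(M;\QQ)$, classes coming from $H_2(M)$ (slices $\{pt\}\times\Sigma$) have vanishing self-intersection since two slices at distinct points are disjoint, classes coming from $H_1(M)$ (tubes $S^1\times\gamma$) have vanishing self-intersection since their Poincar\'e duals pull back from $H^2(M)$ so their cup square lands in $\pi^\ast H^4(M)=0$, and the cross-pairing is the (nondegenerate) Poincar\'e pairing on $M$. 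An even form has no classes of square $-1$, hence no exceptional spheres. (Alternatively, one may sidestep minimality entirely by invoking Theorem~\ref{thm:SWGr2} with $\eE_\omega=\varnothing$.)
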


\begin{remark}
In fact, when $f$ has no critical points (hence $\omega_f$ is symplectic) the proof of this corollary was already known \cite{HutchingsLee2}*{Remark 1.10}. The first instance appeared in \cite{Salamon:mappingTori} for the special case of a mapping torus of a symplectomorphism of a Riemann surface, in which Salamon showed that the 3-dimensional SW invariants recover the Lefschetz invariants of the symplectomorphism.
\end{remark}

We end this discussion with a brief sketch of the first equality in Corollary~\ref{cor:Morse}. While the gradient flowlines for $I_{M,f}$ sit inside $M$, the $J$-holomorphic curves for $Gr_{S^1\times M,\omega_f}$ do not sit inside $S^1\times M$ but rather inside the completion of the complement of $Z_f$. Explicitly, we choose a 3-ball neighborhood $\bigsqcup_kB^3$ of the critical points of $f$, hence a tubular neighborhood $\nN=S^1\times\bigsqcup_k B^3$ of the zero-circles in $Z_f$. Then
$$X_0:=(S^1\times M)-\nN=S^1\times(M-\bigsqcup_k B^3)$$
is a symplectic manifold and $H_2(X_0,\partial X_0;\ZZ)$ is isomorphic to a direct sum of $|\crit(f)|$ copies of
$$H_2(S^1\times(M-B^3),S^1\times S^2;\ZZ)\cong  H_2(M-B^3,S^2;\ZZ)\oplus H_1(M-B^3,S^2;\ZZ)$$
using the relative K\"unneth formula. Given the relative 1st homology class $\tau_f(\s)\in H_1(M,\crit(f);\ZZ)$, the corresponding relative 2nd homology class is
$$[S^1]\times\tau_f(\s)=\tau_{\omega_f}(\pi^*\s)\in H_2(X_0,\partial X_0;\ZZ)$$

Since the contact form $\lambda_0$ of Section~\ref{Taubes' map} is actually $S^1$-invariant, it follows from \cite{Gerig:taming} that we can choose the 3-balls in such a way that $-\partial X_0$ is a contact boundary of $X$ with $\omega_f$ equal to a scalar multiple of $d\lambda_0$ on each component. Although the playground $\big(X_0,\omega_f,J,\tau_{\omega_f}(\pi^*\s)\big)$ is $S^1$-invariant, the calculation of $Gr_{S^1\times M,\omega_f}\big(\tau_{\omega_f}(\pi^*\s)\big)$ requires us to modify $\lambda_0$ (and $\partial X_0$) into a non-$S^1$-invariant nondegenerate contact form $\lambda_\s$, so $(-\partial X_0,\lambda_\s)$ does not arise from any choice of the 3-balls. This is a complication because we would like to lift flowlines $\gamma\subset M$ to $S^1$-invariant curves $S^1\times\gamma\subset X_0$ and count them. Nonetheless, a limiting argument will show that we can perturb the $S^1$-invariant setup to the non-$S^1$-invariant setup and relate the corresponding pseudoholomorphic curves.

\section{Review of gauge theory}
\label{Review of gauge theory}

\indent\indent
The point of this section is to introduce most of the terminology and notations that appear in the later sections. Further information and more complete details are found in \cite{KM:book, HutchingsTaubes:Arnold2}.

\subsection{Closed 3-manifolds}
\label{Closed 3-manifolds}

\indent\indent
Let $(Y,\lambda)$ be a closed oriented connected contact 3-manifold, and choose an almost complex structure $J$ on $\xi$ that induces a symplectization-admissible almost complex structure on $\RR\times Y$. There is a compatible metric $g$ on $Y$ such that\footnote{The factors of $\frac{1}{2}$ can be dropped or changed to any other nonzero real number by a particular rescaling of the metric, but they will be left in to be consistent with the papers of Taubes and Hutchings.} $|\lambda|=1$ and $*\lambda=\frac{1}{2}d\lambda$, with $g(v,w)=\frac{1}{2}d\lambda(v,Jw)$ for $v,w\in\xi$.

View a spin-c structure $\s$ on $Y$ as an isomorphism class of a pair $(\SS,\cl)$ consisting of a rank 2 Hermitian vector bundle $\SS\to Y$ and Clifford multiplication $\cl:TY\to\End(\SS)$. We refer to $\SS$ as the \textit{spinor bundle} and its sections as \textit{spinors}. The set $\Spinc(Y)$ of spin-c structures is an affine space over $H^2(Y;\ZZ)$, defined by
$$(\SS,\cl)+x=(\SS\otimes E^x,\cl\otimes\1)$$
where $E^x\to Y$ is the complex line bundle satisfying $c_1(E^x)=x\in H^2(Y;\ZZ)$. Denote by $c_1(\s)$ the first Chern class of $\det\SS$; it satisfies $c_1(\s+x)=c_1(\s)+2x$.

The contact structure $\xi$ (and more generally, any oriented 2-plane field on $Y$) picks out a canonical spin-c structure $\s_\xi=(\SS_\xi,\cl)$ with $\SS_\xi=\underline{\CC}\oplus\xi$, where $\underline{\CC}\to Y$ denotes the trivial line bundle, and Clifford multiplication is defined as follows. Given an oriented orthonormal frame $\lbrace e_1,e_2,e_3\rbrace$ for $T_yY$ such that $\lbrace e_2,e_3\rbrace$ is an oriented orthonormal frame for $\xi_y$, then in terms of the basis $(1,e_2)$ for $\SS_\xi$,
$$\cl(e_1)=\bigl( \begin{smallmatrix}
i&0\\ 0&-i
\end{smallmatrix} \bigr),\indent \cl(e_2)=\bigl( \begin{smallmatrix}
0&-1\\ 1&0
\end{smallmatrix} \bigr),\indent \cl(e_3)=\bigl( \begin{smallmatrix}
0&i\\ i&0
\end{smallmatrix} \bigr)$$
There is then a canonical isomorphism
$$H^2(Y;\ZZ)\to\Spinc(Y),\indent x\mapsto\left(E^x\oplus(\xi\otimes E^x),\cl\right)$$
where the 0 class corresponds to $\s_\xi$. In other words, there is a canonical decomposition $\SS=E\oplus \xi E$ into $\pm i$ eigenbundles of $\cl(\lambda)$. Here and in what follows, the tensor product notation is suppressed.

\bigskip
A \textit{spin-c connection} is a connection $\bA$ on $\SS$ which is compatible with Clifford multiplication in the sense that
$$\nabla_\bA(\cl(v)\psi)=\cl(\nabla v)\psi+\cl(v)\nabla_\bA\psi$$
where $\nabla v$ denotes the covariant derivative of $v\in TY$ with respect to the Levi-Civita connection. Such a connection is equivalent to a Hermitian connection (also denoted by $\bA$) on $\det\SS$, and determines a \textit{Dirac operator}
$$D_\bA:\Gamma(\SS)\stackrel{\nabla_\bA}{\longrightarrow}\Gamma(T^* Y\otimes\SS)\stackrel{\cl}{\longrightarrow}\Gamma(\SS)$$
With respect to the decomposition $\SS=E\oplus\xi E$, the determinant line bundle is $\det\SS=\xi E^2$ and any spinor can be written as
$$\psi=(\alpha,\beta)$$
There is a unique connection $A_\xi$ on $\xi$ such that its Dirac operator kills the spinor $(1,0)\in\Gamma(\SS_\xi)$, and on $\det\SS$ there is a canonical decomposition
$$\bA=A_\xi+2A$$
with Hermitian connection $A$ on $E$. We henceforth refer to a spin-c connection as a Hermitian connection on $E$, and denote its Dirac operator by $D_A$.

The gauge group $C^\infty(Y,S^1)$ acts on a given pair $(A,\psi)$ by
$$u\cdot(A,\psi)=(A-u^{-1}du,u\psi)$$
In this paper, a \textit{configuration} $\c$ refers to a gauge-equivalence class of such a pair, and the set of configurations is denoted by
$$\bB(Y,\s):=(\Conn(E)\times\Gamma(\SS))/C^\infty(Y,S^1)$$

\bigskip
Fix a suitably generic exact 2-form $\mu\in\Omega^2(Y)$ as described in \cite{HutchingsTaubes:Arnold2}*{\S 2.2}, and a positive real number $r\in\RR$. A configuration $\c$ solves \textit{Taubes' perturbed Seiberg--Witten equations} when
\begin{equation}
\label{SW3}
D_\bA\psi=0,\indent\indent* F_A=r(\tau(\psi)-i\lambda)-\frac{1}{2}* F_{A_\xi}+i*\mu
\end{equation}
where $F_{A_\xi}$ is the curvature of $A_\xi$ and $\tau:\SS\to iT^* Y$ is the quadratic bundle map
$$\tau(\psi)(\cdot)=\langle\cl(\cdot)\psi,\psi\rangle$$
An appropriate change of variables recovers the usual Seiberg--Witten equations (with perturbations) that appear in \cite{KM:book}. 

\begin{remark}
There are additional ``abstract tame perturbations'' to these equations required to obtain transversality of the moduli spaces of its solutions (see \cite{KM:book}*{\S 10}), but they have been suppressed because they do not interfere with the analysis presented in this paper. Further clarification on this matter can be found in \cite{HutchingsTaubes:Arnold2}*{\S 2.1} and \cite{Taubes:ECH=SWF1}*{\S3.h.5}, where the same suppression occurs.
\end{remark}

Denote by $\M(Y,\s)$ the set of solutions to~\eqref{SW3}, called \textit{(SW) monopoles}. A solution is \textit{reducible} if its spinor component vanishes, and is otherwise \textit{irreducible}. After attaching orientations (this being a $\ZZ/2\ZZ$ choice for each monopole, see Section~\ref{Homology orientations}), the monopoles freely generate the monopole Floer chain complex $\Cfrom^*(Y,\lambda,\s,J,r)$. The chain complex differential will not be reviewed here. Of importance to this paper are irreducible monopoles with certain bounds on their energy
$$E(\c):=i\int_Y\lambda\wedge F_A$$
Denote by $\Cfrom^*_L(Y,\lambda,\s,J,r)$ the submodule generated by irreducible monopoles $\c$ with energy $E(\c)<2\pi L$. When $r$ is sufficiently large, $\Cfrom^*_L(Y,\lambda,\s,J,r)$ is a subcomplex of $\Cfrom^*(Y,\lambda,\s,J,r)$ and the homology $\Hfrom^*_L(Y,\lambda,\s,J,r)$ is well-defined and independent of $r$ and $\mu$ (see \cite{HutchingsTaubes:Arnold2}*{\S 2.3}). Taking the direct limit over $L>0$, we recover the ordinary $\Hfrom^*(Y,\s)$ in \cite{KM:book} which is independent of $\lambda$ and $J$. It is sometimes convenient to consider the group
$$\Hfrom^*(Y):=\bigoplus_{\s\in\Spinc(Y)}\Hfrom^*(Y,\s)$$
over all spin-c structures at once.

\subsection{Symplectic cobordisms}
\label{Symplectic cobordisms}

\indent\indent
Let $(X,\omega)$ be a strong symplectic cobordism between (possibly disconnected or empty) closed oriented contact 3-manifolds $(Y_\pm,\lambda_\pm)$. Due to the choice of metric $g_\pm$ on $Y_\pm$ in Section~\ref{Closed 3-manifolds} (and following \cite{HutchingsTaubes:Arnold2}*{\S4.2}), we do not extend $\omega$ over $\overline X$ using $d(e^s\lambda_\pm)$ on the ends $(-\infty,0]\times Y_-$ and $[0,\infty)\times Y_+$. Instead, we extend $\omega$ using $d(e^{2s}\lambda_\pm)$ as follows. Fix a smooth increasing function $\phi_-:(-\infty,\e]\to(-\infty,\e]$ with $\phi_-(s)=2s$ for $s\le\frac{\e}{10}$ and $\phi_-(s)=s$ for $s>\frac{\e}{2}$, and fix a smooth increasing function $\phi_+:[-\e,\infty)\to[-\e,\infty)$ with $\phi_+(s)=2s$ for $s\ge-\frac{\e}{10}$ and $\phi_+(s)=s$ for $s\le-\frac{\e}{2}$, where $\e>0$ is such that $\omega=d(e^s\lambda_\pm)$ on the $\e$-collars of $Y_\pm$. Then the desired extension is
\begin{equation*}
\tilde\omega:=\begin{cases}
    d(e^{\phi_-}\lambda_-) & \text{on}\;\; (-\infty,\e]\times Y_-\\
    \omega & \text{on}\;\; X\setminus\Big(\big([0,\e]\times Y_-\big)\cup\big([-\e,0]\times Y_+\big)\Big)\\
    d(e^{\phi_+}\lambda_+) & \text{on}\;\; [-\e,\infty)\times Y_+
  \end{cases}
\end{equation*}
Now choose a cobordism-admissible almost complex structure $J$ on $(\overline X,\tilde\omega)$. Following \cite{HutchingsTaubes:Arnold2}*{\S4.2}, we equip $\overline X$ with a particular metric $g$ so that it agrees with the product metric with $g_\pm$ on the ends $(-\infty,0]\times Y_-$ and $[0,\infty)\times Y_+$ and so that $\tilde\omega$ is self-dual. Finally, define
$$\widehat\omega:=\sqrt{2}\tilde\omega/|\tilde\omega|_g$$
and note that $J$ is still cobordism-admissible.

The 4-dimensional gauge-theoretic scenario is analogous to the 3-dimensional scenario. View a spin-c structure $\s$ on $X$ as an isomorphism class of a pair $(\SS,\cl)$ consisting of a Hermitian vector bundle $\SS=\SS_+\oplus\SS_-$, where $\SS_\pm$ have rank 2, and Clifford multiplication $\cl:TX\to\End(\SS)$ such that $\cl(v)$ exchanges $\SS_+$ and $\SS_-$ for each $v\in TX$. We refer to $\SS_+$ as the \textit{positive spinor bundle} and its sections as \textit{(positive) spinors}. The set $\Spinc(X)$ of spin-c structures is an affine space over $H^2(X;\ZZ)$, and we denote by $c_1(\s)$ the first Chern class of $\det\SS_+=\det\SS_-$. A spin-c connection on $\SS$ is equivalent to a Hermitian connection $\bA$ on $\det\SS_+$ and defines a Dirac operator $D_\bA:\Gamma(\SS_\pm)\to\Gamma(\SS_\mp)$.

A spin-c structure $\s$ on $X$ restricts to a spin-c structure $\s|_{Y_\pm}$ on $Y_\pm$ with spinor bundle $\SS_{Y_\pm}:=\SS_+|_{Y_\pm}$ and Clifford multiplication $\cl_{Y_\pm}(\cdot):=\cl(v)^{-1}\cl(\cdot)$, where $v$ denotes the outward-pointing unit normal vector to $Y_+$ and the inward-pointing unit normal vector to $Y_-$. There is a canonical way to extend $\s$ over $\overline X$, and the resulting spin-c structure is also denoted by $\s$. There is a canonical decomposition $\SS_+=E\oplus K^{-1}E$ into $\mp2i$ eigenbundles of $\cl_+(\widehat\omega)$, where $K$ is the canonical bundle of $(\overline X,J)$ and $\cl_+:\bigwedge^2_+T^*\overline X\to\End(\SS_+)$ is the projection of Clifford multiplication onto $\End(\SS_+)$. This agrees with the decomposition of $\SS_{Y_\pm}$ on the ends of $\overline X$.

The symplectic form $\omega$ picks out the canonical spin-c structure $\s_\omega=(\SS_\omega,\cl)$, namely that for which $E$ is trivial, and the $H^2(X;\ZZ)$-action on $\Spinc(X)$ becomes a canonical isomorphism. There is a unique connection $A_{K^{-1}}$ on $K^{-1}$ such that its Dirac operator annihilates the spinor $(1,0)\in\Gamma((\SS_\omega)_+)$, and we henceforth identify a spin-c connection with a Hermitian connection $A$ on $E$ and denote its Dirac operator $D_A$.

\bigskip
In this paper, a \textit{configuration} $\d$ refers to a gauge-equivalence class of a pair $(\bA,\Psi)$ under the gauge group $C^\infty(X,S^1)$-action. A connection $\bA$ on $\det\SS_+$ is in \textit{temporal gauge} on the ends of $\overline X$ if
$$\nabla_\bA=\frac{\partial}{\partial s}+\nabla_{\bA(s)}$$
on $(-\infty,0]\times Y_-$ and $Y_+\times[0,\infty)$, where $\bA(s)$ is a connection on $\det\SS_{Y_\pm}$ depending on $s$. Any connection can be placed into temporal gauge by an appropriate gauge transformation. Given monopoles $\c_\pm$ on $Y_\pm$, the set of configurations which are asymptotic to $\c_\pm$ (in temporal gauge on the ends of $\overline X$) is denoted by
$$\bB(\c_-,\c_+;\s)\subset(\Conn(E)\times\Gamma(\SS_+))/C^\infty(X,S^1)$$

\bigskip
Fix suitably generic exact 2-forms $\mu_\pm\in\Omega^2(Y_\pm)$, a suitably generic exact 2-form $\mu\in\Omega^2(\overline X)$ that agrees with $\mu_\pm$ on the ends of $\overline X$ (with $\mu_*$ denoting its self-dual part), and a positive real number $r\in\RR$. \textit{Taubes' perturbed Seiberg--Witten equations} for a configuration $\d$ are
\begin{equation}
\label{SW4}
D_\bA\Psi=0,\;\;F^+_A=\frac{r}{2}(\rho(\Psi)-i\widehat{\omega})-\frac{1}{2}F^+_{A_{K^{-1}}}+i\mu_*
\end{equation}
where $F_A^+$ is the self-dual part of the curvature of $A$ and $\rho:\SS_+\to i\bigwedge^2_+T^* X$ is the quadratic bundle map
$$\rho(\Psi)(\cdot,\cdot)=-\frac{1}{2}\big\langle[\cl(\cdot),\cl(\cdot)]\Psi,\Psi\big\rangle$$
Similarly to the 3-dimensional equations, there are additional ``abstract tame perturbations'' which have been suppressed in this paper (see \cite{KM:book}*{\S24.1}). Denote by $\M(\c_-,\c_+;\s)$ the set of solutions to~\eqref{SW4} in $\bB(\c_-,\c_+;\s)$, called \textit{(SW) instantons}.

Similarly to ECH, an ``index'' is associated with each SW instanton, namely the local expected dimension of the moduli space of SW instantons. Denote by $\M_k(\c_-,\c_+;\s)$ the subset of elements in $\M(\c_-,\c_+;\s)$ that have index $k$.

\subsection{Closed 4-manifolds}
\label{Closed 4-manifolds}

\indent\indent
The case $(Y_\pm,\lambda_\pm)=(\varnothing,0)$ recovers Seiberg--Witten theory on closed oriented symplectic 4-manifolds. In general, for closed oriented Riemannian 4-manifolds $(X,g)$, we can recover Seiberg--Witten theory from the above setup by ignoring the appearance of $\omega$ and thus ignoring the canonical decomposition of $\SS_+$. The set of spin-c structures is then only an $H^2(X;\ZZ)$-torsor. A configuration $\d=[\bA,\Psi]\in\bB(X,\s)$ solves \textit{the (perturbed) Seiberg--Witten equations} when
\begin{equation}
\label{SWclosed}
D_\bA\Psi=0,\;\;F^+_\bA=\frac{1}{4}\rho(\Psi)+i\mu
\end{equation}
where $\mu\in\Omega^2_+(X)$ is now a self-dual 2-form. Denote the space of solutions to~\eqref{SWclosed} by $\M(\s)$. 

When $b^2_+(X)>0$, a generic choice of $\mu$ makes $\M(\s)$ a finite-dimensional compact orientable smooth manifold, where the orientation is determined by a \textit{homology orientation} of $X$ (see also Section~\ref{Homology orientations}), this being an orientation of
$$\operatorname{det}^+(X):=\det H^1(X;\RR)\otimes\det H^2_+(X;\RR)$$
As explained in \cite{Taubes:Gr=SW}*{\S 1.c}, if $X$ is equipped with a symplectic form then there is a canonical homology orientation.

The dimension of $\M(\s)$ is equal to the integer $d(\s)$ given by~\eqref{eqn:dim}, and its parity is equal to the parity of
$$1-b^1(X)+b^2_+(X)$$
If $\dim\M(\s)<0$, then $\M(\s)$ is empty and the \textit{Seiberg--Witten invariant} $SW_X(\s)$ is defined to be zero. In the remaining cases, the \textit{Seiberg--Witten invariant} $SW_X(\s)$ is an element of $\Lambda^* H^1(X;\ZZ)$ and given by suitable counts of points in $\M(\s)$ (see the upcoming Definition~\ref{defn:SW}). For example, if $\dim\M(\s)=0$ then $SW_X(\s)\in\ZZ$ is the signed count of the finite number of oriented points in $\M(\s)$.

\subsubsection{Choice of ``chamber''}

\indent\indent
When $b^2_+(X)>1$, the value of the Seiberg--Witten invariant is a diffeomorphism invariant of $X$ independent of the choice of generic pairs $(g,\mu)\in\operatorname{Met}(X)\times\Omega^2_+(X)$, where $\operatorname{Met}(X)$ denotes the Fr\'echet space of smooth Riemannian metrics on $X$. When $b^2_+(X)=1$, there is a ``wall-crossing phenomenon'' as follows. Denote by $\omega_g$ the unique (up to scalar multiplication) nontrivial near-symplectic form with respect to $g$. The set of pairs $(g,\mu)$ satisfying the constraint
\begin{equation}
\label{chamber}
2\pi[\omega_g]\cdot c_1(\s)+\int_X\omega_g\wedge\mu=0
\end{equation}
defines a ``wall'' which separates $\operatorname{Met}(X)\times\Omega^2_+(X)$ into two open sets, called \textit{$c_1(\s)$-chambers}. The Seiberg--Witten invariant is constant on any $c_1(\s)$-chamber, and the difference between chambers is computable.

The near-symplectic form $\omega$ on $X$ picks out a canonical $c_1(\s)$-chamber, namely those pairs $(g,\mu)$ for which the left hand side of~\eqref{chamber} is negative. This is the chamber that pertains to the large $r$ version of Taubes' perturbed Seiberg--Witten equations~\eqref{SW4} over the completion of $(X_0,\omega)$ and subsequently used to state our main Theorem~\ref{thm:SWGr1}.

\subsection{Kronheimer--Mrowka's formalism}
\label{Kronheimer--Mrowka's formalism}

\indent\indent
The previous sections concerned the setup of Seiberg--Witten theory from the point of view of symplectic geometry, using Taubes' large perturbations. We now briefly review some relevant aspects of Seiberg--Witten theory from the point of view of Kronheimer--Mrowka's monopole Floer homology, following closely the notations from \cite{KM:book} and deferring to \cite{KM:book} for their precise definitions.

Let $\bB(Y,\s)$ denote the space of configurations $[\bA,\psi]$. Since we are not taking large perturbations to the Seiberg--Witten equations, we have to deal with the reducible locus $\bB^\text{red}(Y,\s)$ which prevents $\bB(Y,\s)$ from being a Banach manifold. This is done by forming the \textit{blow-up} $\bB^\sigma(Y,\s)$, the space of configurations $[\bA,s,\psi]$ such that $s\in\RR^{\ge0}$ and $\|\psi\|_2=1$, equipped with the blow-down map
$$\bB^\sigma(Y,\s)\to\bB(Y,\s),\indent[\bA,s,\psi]\mapsto[\bA,s\psi]$$
This is a Banach manifold whose boundary $\partial\bB^\sigma(Y,\s)$ consists of reducible configurations (where $s=0$). The same setup applies to the case that $X$ is a closed 4-manifold. The integral cohomology ring $H^*(\bB^\sigma(M,\s);\ZZ)$, for $M$ either $Y$ or $X$, is isomorphic to the graded algebra
$$\AA(M):=\big(\Lambda^* H_1(M;\ZZ)/\text{Torsion}\big)\otimes\ZZ[U]$$
where $U$ is a 2-dimensional generator (see \cite{KM:book}*{Proposition 9.7.1}).

We can construct a certain vector field $\vV^\sigma$ on $\bB^\sigma(Y,\s)$ using the pull-back of the gradient of the Chern--Simons--Dirac functional $\lL_\text{CSD}:\bB(Y,\s)\to\RR$ (see \cite{KM:book}*{\S4.1}). Strictly speaking, the Chern--Simons--Dirac functional is not well-defined on $\bB(Y,\s)$ unless $c_1(\s)$ is torsion, but such spin-c structures are the only ones relevant to this paper. Likewise, the perturbed gradient $\grad\lL_\text{CSD}+\q$ gives rise to a vector field $\vV^\sigma+\q^\sigma$, where $\q$ is an ``abstract tame perturbation'' (see \cite{KM:book}*{\S 10}). We always assume that $\q$ is chosen from a residual subset of the Banach space of `large' tame perturbations introduced in \cite{KM:book}*{Theorem 11.6.1, Definition 11.6.3} so that all stationary points of $\vV^\sigma+\q^\sigma$ are nondegenerate (by \cite{KM:book}*{Theorem 12.1.2}).

The critical points (i.e. stationary points) of $\vV^\sigma+\q^\sigma$ are either \textit{irreducibles} of the form $[\bA,s,\psi]$ with $s>0$ and $[\bA,s\psi]\in\crit(\grad\lL_\text{CSD}+\q)$, or \textit{reducibles} of the form $[\bA,0,\psi]$ with $\psi$ an eigenvector of $D_\bA$. A reducible is \textit{boundary-stable} (respectively, \textit{boundary-unstable}) if the corresponding eigenvalue is positive (respectively, negative). Denote by
$$\C(Y,\s)=\C^o(Y,\s)\sqcup\C^u(Y,\s)\sqcup\C^s(Y,\s)$$
the decomposition of the set of critical points into the respective sets of irreducibles and boundary-(un)stable reducibles. We can package these critical points together in various ways to form the monople Floer (co)homologies, such as $\Hto^*(Y,\s)$ and $\Hfrom^*(Y,\s)$ -- the former cochain complex is generated by $\C^o(Y,\s)\sqcup\C^s(Y,\s)$ while the latter complex is generated by $\C^o(Y,\s)\sqcup\C^u(Y,\s)$, both equipped with coherent choices of orientations (see Section~\ref{Homology orientations}). The differentials will not be reviewed here, but we do assume in this paper that all perturbations $\q$ are chosen so that the differentials are well-defined.

\begin{remark}
If $\q$ is one of Taubes' sufficiently large perturbations associated with a contact form (given in Section~\ref{Closed 3-manifolds}), then the image of $\C(Y,\s)$ under the blow-down map is $\M(Y,\s)$. In fact, we no longer need to use the blow-up model thanks to the direct limit description in Section~\ref{Closed 3-manifolds}.
\end{remark}

Let $X$ either be a closed 4-manifold or have boundary $Y$. There is a partially-defined restriction map $r:\bB^\sigma(X,\s)\dashrightarrow\bB^\sigma(Y,\s)$ whose domain consists of those configurations $[\bA,s,\Psi]$ satisfying $\Psi_Y:=\Psi|_Y\ne0$, such that
$$r([\bA,s,\Psi])=\Big[\bA|_Y,s\|\Psi_Y\|_2,\Psi_Y/\|\Psi_Y\|_2\Big]$$
Similarly, if $X=[0,1]\times Y$ then there is a family of restriction maps $r_t:\bB^\sigma(X,\s)\dashrightarrow\bB^\sigma(Y,\s)$ for $t\in[0,1]$. If we instead work over $\RR\times Y$ or cylindrical ends such as $(-\infty,0]\times Y$, then we need to use $L^2_{k,loc}$-norms (see \cite{KM:book}*{\S 13}). 

With respect to a cylindrical completion $\overline X$ of $X$, the unperturbed Seiberg--Witten equations~\eqref{SWclosed} on $\bB(\overline X,\s)$ now take the form
\begin{equation}
\label{SWblowup}
D_\bA\Psi=0,\;\;F^+_\bA=s^2\frac14\rho(\Psi)
\end{equation} 
on $\bB^\sigma(\overline X,\s)$. It is explained in \cite{KM:book}*{\S24.1} how to define abstract perturbations $\p^\sigma$ to~\eqref{SWblowup} from abstract perturbations $\p$ to~\eqref{SWclosed}. In the ``cylindrical'' case $\overline X=\RR\times Y$ with spin-c structure induced from $\s$ on $Y$ and $t$-independent abstract perturbation $\p$ (see \cite{KM:book}*{Definition 10.1.1}) induced from an abstract tame perturbation on $Y$, any solution $\d$ to the $\p^\sigma$-perturbed version of~\eqref{SWblowup} on $\RR\times Y$ determines a path
$$\check\d(t):=r_t(\d)\in\bB^\sigma(Y,\s)$$
because there is a unique continuation theorem which ensures that $r_t$ is defined on each slice $\d|_{\lbrace t\rbrace\times Y}$ (see \cite{KM:book}*{\S10.8}).

In the general case of a cobordism $(X,\s):(Y_+,\s_+)\to(Y_-,\s_-)$, we fix abstract tame perturbations $\q_\pm$ on $Y_\pm$ and extend them to a suitable abstract perturbation $\p$ on $\overline X$. To fix notation, if $X$ is a symplectic cobordism with data $(\omega,\lambda_\pm$) then we denote by $\q_{\lambda_\pm}$ and $\p_\omega$ the abstract perturbations which are used in Section~\ref{Symplectic cobordisms} to define Taubes' perturbed Seiberg--Witten equations (it is explained in \cite{Taubes:ECH=SWF1}*{\S3.d} that they belong to our residual subset of abstract perturbations). With the notation of Section~\ref{Symplectic cobordisms} and suppressing additional abstract perturbations, the $iT^* Y$-component of $\q_{\lambda_\pm}$ (and of $\q^\sigma_{\lambda_\pm}$) is $-ird\lambda+2i\mu_\pm$ and the $i\bigwedge^2_+T^* X$-component of $\p_\omega$ (and of $\p^\sigma_\omega$) is $-ir\widehat{\omega}+2i\mu_*$. The $\p^\sigma_\omega$-perturbed version of~\eqref{SWblowup} is identified with~\eqref{SW4} by blowing-down and rescaling $\Psi$ by $\sqrt{2r}$.

Given $\c_\pm\in\C(Y_\pm,\s_\pm)$, we denote by $M(\c_-,\c_+;\s)$ the subset of $\p^\sigma$-perturbed Seiberg--Witten solutions $\d\in\bB^\sigma(\overline X,\s)$ for which $\check\d$ (on the ends of $\overline X$) is asymptotic to $\c_\pm$ as $t\to\pm\infty$. Depending on the context, we may alternatively write $M(\c_-,X,\c_+;\s)$ to make the manifold explicit. In the ``cylindrical'' case $\overline X=\RR\times Y$ with $t$-independent perturbation there is an $\RR$-action by translation on $M(\c_-,\c_+;\s)$. The resulting equivalence class of unparametrized nontrivial trajectories, where a trajectory is \textit{nontrivial} if it is not $\RR$-invariant, is denoted by $\breve M(\c_-,\c_+;\s)$. The moduli space of broken trajectories in the sense of \cite{KM:book}*{Definition 16.1.2} is denoted by $\breve M^+(\c_-,\c_+;\s)$.

\bigskip
We now revisit Section~\ref{Closed 4-manifolds}, where $\M(\s)\subset\bB(X,\s)$ for a closed 4-manifold $X$. As explained in \cite{KM:book}*{\S27}, for generic perturbations to the Seiberg--Witten equations~\eqref{SWblowup} on $\bB^\sigma(X,\s)$ the resulting moduli space of Seiberg--Witten solutions is diffeomorphic to $\M(\s)$ via the blow-down map. Thus, we will define the Seiberg--Witten invariants using the blown-up configuration space, and $\M(\s)$ will also denote the moduli space of Seiberg--Witten solutions in $\bB^\sigma(X,\s)$. The moduli space gives a well-defined element $[\M(\s)]\in H_*(\bB^\sigma(X,\s);\ZZ)$. 

\begin{definition}
\label{defn:SW}
For a given choice of homology orientation of $X$, and a given choice of $c_1(\s)$-chamber when $b^2_+(X)=1$, the \textit{Seiberg--Witten invariant} $SW_X(\s)\in\Lambda^* H^1(X;\ZZ)$ is defined as follows. Its value on $a\in\Lambda^p H_1(X;\ZZ)/\text{Torsion}$, for $p\le d(\s)$ such that $d(\s)-p$ is even, is
$$SW_X(\s)(a):=\left\langle U^{\frac{1}{2}(d(\s)-p)}a,[\M(\s)]\right\rangle\in\ZZ$$
and it is defined to be zero for all other integers $p$.
\end{definition}

\subsection{Homology orientations}
\label{Homology orientations}

\indent\indent
To coherently orient the moduli spaces $M(\c_-,\c_+;\s)$, as explained in \cite{KM:book}*{\S 20, \S28.4}, we must make a $\ZZ/2\ZZ$ choice for each generator $\c_\pm$ and we must choose a \textit{(cobordism) homology orientation} of $X$. The latter is an orientation of
$$\operatorname{det}^+(X):=\det H^1(X;\RR)\otimes\det I^+(X;\RR)\otimes\det H^1(Y_+;\RR)$$
where $I^+(X;\RR)$ is defined as follows (see also \cite{KM:book}*{\S3.4}): The relative cap-product pairing
$$H^2(X,\partial X;\RR)\times H^2(X;\RR)\to H^4(X,\partial X;\RR)\cong\RR$$
induces a nondegenerate quadratic form on the kernel of the restriction map $H^2(X;\RR)\to H^2(\partial X;\RR)$, and $I^+(X;\RR)\subset H^2(X;\RR)$ is a maximal nonnegative subspace for this quadratic form. The set of homology orientations is denoted by $\Lambda(X)$.  In the case that $X=[0,1]\times Y$ there is a canonical homology orientation $\fo(X)\in\Lambda(X)$, and it is implicitly used when coherently orienting the moduli spaces of trajectories on $\RR\times Y$ to define the monopole Floer differentials. In the case that $Y_\pm=\varnothing$, we recover the notion of homology orientation of a closed 4-manifold in Section~\ref{Closed 4-manifolds}.

Likewise, the $\ZZ/2\ZZ$ set of orientations for a configuration $\c\in\bB^\sigma(Y)$ is denoted by $\Lambda(\c)$ and defined in \cite{KM:book}*{\S 20.3}. These sets are defined so that, when $\c_0$ is a reducible critical point of the unperturbed Chern--Simons--Dirac functional, there is a canonical choice $\fo(\c_0)\in\Lambda(\c)$.

\bigskip
We now explain these choices in a bit more detail, for the case that $(X,\omega)$ is a symplectic cobordism and the moduli spaces are defined using Taubes' large perturbations.

Any $\c\in\bB(Y,\s)$ determines a self-adjoint operator $\lL_\c$ which, roughly speaking, is the linearization of Taubes' perturbed Seiberg--Witten equations and the gauge group action (see \cite{Taubes:ECH=SWF1}*{\S3.d}). A monopole $\c\in\M(Y,\s)$ is \textit{nondegenerate} if the kernel of $\lL_\c$ is trivial. Similarly, the linearization of Taubes' perturbed Seiberg--Witten equations and the gauge group action at a given configuration $\d\in\bB(\c_-,\c_+;\s)$ between monopoles $\c_\pm$ determines its \textit{deformation operator}
\begin{center}
$\D_\d:L^2_1(iT^*\overline X\oplus\SS_+)\to L^2(i\bigwedge^2_+T^*\overline X\oplus\SS_-\oplus i\RR)$
\end{center}
When $\c_\pm$ are irreducible and nondegenerate, this operator is Fredholm.

Fix spin-c structures $\s_\pm$ and nondegenerate monopoles $\c_\pm$ on $Y_\pm$. Let $\bB(\c_-,\c_+)$ denote the union of $\bB(\c_-,\c_+;\s)$ over all spin-c structures on $X$ which restrict to $\s_\pm$ on $Y_\pm$, and let $\Lambda(\c_-,\c_+)$ denote the orientation sheaf of the determinant line bundle $\det\D\to\bB(\c_-,\c_+)$. The collection $\lbrace\Lambda(\c_-,\c_+)\rbrace$ over all nondegenerate monopoles $\c_\pm\in\M(Y_\pm,\s_\pm)$ satisfies the following property: Each nondegenerate monopole $\c$ has an associated $\ZZ/2\ZZ$-module $\Lambda(\c)$ such that there is a canonical isomorphism
$$\Lambda(\c_-,\c_+)\cong\Lambda(\c_-)\otimes_{\ZZ/2\ZZ}\Lambda(X)\otimes_{\ZZ/2\ZZ}\Lambda(\c_+)$$
and the orientations $\lbrace\fo(\c_-,\c_+)\in\Lambda(\c_-,\c_+)\rbrace_{\c_\pm\in\M(Y_\pm,\s_\pm)}$ are \textit{coherent} if, after fixing a homology orientation $\fo(X)\in\Lambda(X)$, there exists a corresponding set of choices $\lbrace\fo(\c)\in\Lambda(\c)\rbrace_{\c\in\M(Y_\pm,\s_\pm)}$ such that $\fo(\c_-,\c_+)=\fo(\c_-)\fo(X)\fo(\c_+)$.

If $\d$ is nondegenerate, i.e. $\coker(\D_\d)=0$, then the restriction of $\Lambda(\c_-,\c_+)$ to $\d$'s component of $\M(\c_-,\c_+;\s)$ is canonically isomorphic to the set of orientations of $\ker(\D_\d)$.

\begin{remark}
It is currently unknown whether there is a canonical homology orientation of $(X,\omega)$, except in the case of a closed symplectic 4-manifold \cite{Taubes:Gr=SW}*{\S 1.c}. But the search for a canonical choice can be ``pushed to the boundary'' $\partial X$, as follows. Fix the canonical spin-c structures $\s_{\xi_\pm}$ on $Y_\pm$ and the canonical spin-c structure $\s_\omega$ on $X$. Consider the canonical configurations
$$\c_{\xi_\pm}:=[A_{\xi_\pm},(1,0)]\in\bB(Y_\pm,\s_{\xi_\pm})$$
and the canonical configuration
$$\d_\omega:=[A_{K^{-1}},(1,0)]\in\bB(\c_{\xi_-},\c_{\xi_+};\s_\omega)$$
There are perturbations to these configurations, still denoted $\c_{\xi_\pm}$ and $\d_\omega$, which are nondegenerate solutions to Taubes' perturbed Seiberg--Witten equations for $r$ sufficiently large, and the deformation operator $\D_{\d_\omega}$ has trivial kernel and cokernel. Thus, there is a canonical orientation of $\det\D_{\d_\omega}$, i.e. a canonical choice in $\Lambda(\c_{\xi_-},\c_{\xi_+})$. If it can be shown that there are canonical choices in $\Lambda(\c_{\xi_\pm})$, then there is a canonical choice in $\Lambda(X)$.
\end{remark}

\subsubsection{Choice of near-symplectic homology orientation}

\indent\indent
The case relevant to this paper is a (closed) near-symplectic manifold $(X,\omega)$ and the induced symplectic cobordism $(X_0,\omega)$. As explained in \cite{KM:book}*{\S3.4, \S26.1}, there is a composition law for (cobordism) homology orientations. Namely, we view $X$ as the composition of cobordisms
$$\varnothing\overset{X_0}{\xrightarrow{\hspace*{.75cm}}}\bigsqcup_{i=1}^NS^1\times S^2\overset{\nN}{\xrightarrow{\hspace*{.75cm}}}\varnothing$$
and then there is a specification
$$\Lambda(X)=\Lambda(\nN)\otimes_{\ZZ/2\ZZ}\Lambda(X_0)$$
so that a choice of homology orientation for any two objects in $\lbrace X,\nN,X_0\rbrace$ determines a homology orientation of the third object.

Now, $\nN$ is the disjoint union of $N$ copies of $S^1\times B^3$ (the tubular neighborhoods of the zero-circles of $\omega$). Since each $S^1\times B^3$ has a canonical homology orientation, a homology orientation of $\nN$ is equivalent to a choice of ordering of the zero-circles of $\omega$. Thus, once an ordering of the zero-circles and a homology orientation of $X$ have been fixed, there is an induced homology orientation of $X_0$. (Likewise, if it turns out that $\omega$ determines a canonical homology orientation of $X_0$, then a homology orientation of $X$ is determined by a choice of ordering of the zero-circles.)

\subsection{Gradings and $U$-maps}

\indent\indent
The group $\Hfrom^{-*}(Y)$ has an absolute grading by homotopy classes of oriented 2-plane fields on $Y$ (see \cite{KM:book}*{\S 28} or \cite{Hutchings:revisited}*{\S 3}), the set of which is denoted by $J(Y)$. This grading of a critical point $\c\in\C(Y,\s)$ is denoted by $|\c|\in J(Y)$.

As described in \cite{KM:book}*{\S 28} and \cite{Gompf:handlebody}*{\S 4}, there is a well-defined map $J(Y)\to\Spinc(Y)$ with the following properties. If $H^2(Y;\ZZ)$ has no 2-torsion then the Euler class of the given 2-plane field uniquely determines the corresponding spin-c structure. There is a transitive $\ZZ$-action on $J(Y)$ whose orbits correspond to the spin-c structures: If $[\xi]\in J(Y)$ then $[\xi]+n$ is the homotopy class of a 2-plane field which agrees with $\xi$ outside a small ball $B^3\subset Y$ and disagrees with $\xi$ on $B^3$ by a map $(B^3,\partial B^3)\to(SO(3),\lbrace\1\rbrace)$ of degree $2n$.\footnote{This convention is opposite to that used in \cite{KM:book}.} A given orbit $J(Y,\s)$ is freely acted on by $\ZZ$ if and only if the corresponding Euler class is torsion. In particular, there is an induced relative $\ZZ/d\ZZ$ grading on $\Hfrom^{-*}(Y,\s)$, where $d$ denotes the divisibility of $c_1(\s)$ in $H^2(Y;\ZZ)/\text{Torsion}$.

It is useful to write out the relative $\ZZ$ grading on $\Hfrom^{-*}(Y,\s)$ when $\s$ is torsion, as follows. Given $\c_\pm\in\C(Y,\s)$ and cylindrical metric on $\RR\times Y$ and $t$-independent abstract perturbation, each trajectory $\d\in M(\c_-,\c_+;\s)$ over $\RR\times Y$ has a Fredholm operator $Q_\d$ which, roughly speaking, is the linearization of the perturbed version of~\eqref{SWblowup} and the gauge group action (see \cite{KM:book}*{\S14.4}). The \textit{relative grading} $\gr(\c_-,\c_+)$ between $\c_-$ and $\c_+$ is defined to be the Fredholm index of $Q_\d$ for any $\d\in M(\c_-,\c_+;\s)$, and
$$|\c_+|=|\c_-|+\gr(\c_-,\c_+)$$
as expected. The fact that this index does not depend on the choice of $\d$ (for $\s$ torsion) follows immediately from \cite{KM:book}*{Proposition 14.4.5, Lemma 14.4.6}.

\bigskip
As described in \cite{KM:book}*{\S 23.2, \S25.3}, there is an $\AA(Y)$-module structure on $\Hto_*(Y)$ and $\Hfrom^*(Y)$. We now clarify the action of $U\in\AA(Y)$ for the moduli spaces of solutions to the large $r$ version of Taubes' perturbed Seiberg--Witten equations over a connected contact 3-manifold $Y$. Fix a base point $y\in Y$ and consider SW instantons on $\RR\times Y$ for a given spin-c structure $\s$ on $Y$. Denote by $\M_2(\c_-,\c_+;\s,y)$ the subset of SW instantons $[A,(\alpha,\beta)]\in\M_2(\c_-,\c_+;\s)$ for which $\alpha\in\Gamma(E)$ vanishes at $(0,y)\in\RR\times Y$. The action of $U$ then becomes a degree $-2$ chain map\footnote{This definition appears in \cite{Taubes:ECH=SWF5}*{\S1.b} and agrees with that in \cite{KM:book}*{\S25.3} and \cite{KMOS:monopoles}*{\S4.11}. See \cite{HFHM5}*{\S2.5} for details.}
$$U_y:\Hfrom^{-*}(Y,\s)\to\Hfrom^{2-*}(Y,\s)$$
that counts the elements of $\M_2(\c_-,\c_+;\s,y)$, such that on the level of cohomology this $U$-map does not depend on the choice of base point $y$.

\bigskip
The chain complex $\Cfrom_j(Y,\s)$ for $\Hfrom_j(Y,\s)$ in grading $j$ is a finitely generated free abelian group (see \cite{KM:book}*{Lemma 22.3.3}), and the cochain complex for $\Hfrom^j(Y,\s)$ in grading $j$ is then defined by $\Cfrom^j=\Hom(\Cfrom_j,\ZZ)$. If $Y$ is disconnected, then $\Cfrom_*(Y,\s)$ and $\Cfrom^*(Y,\s)$ are the tensor products of the respective (co)chain complexes of the components of $Y$. The same applies to the other flavors of monopole Floer (co)homology. There is one $U$-map for each connected component of $Y$, namely the tensor product of the $U$-map for the corresponding component with the identity map on the other factors.

\section{Review of Taubes' isomorphisms}
\label{Review of Taubes' isomorphisms}

\indent\indent
What follows is some background and an introduction to Taubes' relation between gauge theory and pseudoholomorphic curve theory. This is necessary review in order to mimic the story for cobordisms in Section~\ref{Equating ECH and HM cobordism counts}. Complete details are found in \cite{Taubes:SWGrBook,HutchingsTaubes:Arnold2,Taubes:ECH=SWF1,Taubes:ECH=SWF2,Taubes:ECH=SWF3,Taubes:ECH=SWF4,Taubes:ECH=SWF5}.

\subsection{Vortices and orbits and curves}
\label{Vortices and orbits and curves}

\indent\indent
The isomorphism between monopole Floer homology and ECH was inspired by the equivalence of the 4-dimensional invariants, the Seiberg--Witten invariants and the Gromov invariants. But these relations were preceded (and depended on) the analogous correspondence in two dimensions, between vortices and points in the complex plane.

A pair $(A,\alpha)$ consists of a Hermitian connection on the trivial complex line bundle $\underline\CC\to\CC$ and a section of it, and $\c$ denotes its gauge-equivalence class under the gauge group $C^\infty(\CC,S^1)$. Given a nonnegative integer $n$, the \textit{n-vortex equations} for a configuration $\c$ are
$$* F_A=-i(1-|\alpha|^2),\;\;\dbar_A\alpha=0,\;\;|\alpha|\le1,\;\;\int_\CC(1-|\alpha|^2)\dv=2\pi n$$
The solutions are called \textit{n-vortices}, and their moduli space is denoted by $\C_n$. For $n=0$ this space is the single point $(0,1)$ up to gauge-equivalence, and when $n>0$ this space has the structure of a complex manifold that is biholomorphic to $\CC^n$ via the map~\eqref{eqn:biholo} below. In fact,

\begin{theorem}[\cite{JaffeTaubes, Taubes:vortex}]
Given a nonnegative integer $n$ and a collection of (not necessarily distinct) points $z_1,\ldots,z_n\in \CC$, there exists a unique solution $(A,\alpha)$ of the vortex equation (up to gauge-equivalence) having finite energy and vortex number $n$ and satisfying
$$\alpha^{-1}(0)=\bigcup_{j=1}^n\lbrace z_j\rbrace$$
Conversely, all finite energy solutions having vortex number $n\ge0$ are gauge-equivalent to a solution of this form.
\end{theorem}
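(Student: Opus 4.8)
The plan is to reduce the vortex equations to a single semilinear scalar elliptic equation on $\CC$ --- the Taubes--Liouville equation with prescribed logarithmic singularities --- and then invoke the existence and uniqueness theory for that equation (this is, in essence, the Jaffe--Taubes argument, which I will recall). Fix $n$ and the points $z_1,\dots,z_n$, and record the multiplicities $m_j\ge1$ of the distinct points among them, so that $\sum_j m_j=n$. First I would note that $\dbar_A\alpha=0$ makes $\alpha$ locally holomorphic in a holomorphic trivialization, with $A^{0,1}=-\dbar\log\alpha$ away from the zeros; hence the zeros of $\alpha$ are isolated of finite order, and on the complement of $\alpha^{-1}(0)$ the curvature is expressed as $\ast F_A=\tfrac{i}{2}\Delta\log|\alpha|^2$ (with appropriate sign conventions). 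Writing $u:=\log|\alpha|^2$, the curvature equation $\ast F_A=-i(1-|\alpha|^2)$ becomes, as a distributional equation on all of $\CC$,
$$\Delta u\;=\;2\bigl(e^{u}-1\bigr)+4\pi\sum_{j}m_j\,\delta_{z_j},$$
the constraint $|\alpha|\le1$ becomes $u\le0$, and the finite-energy normalization $\int_\CC(1-|\alpha|^2)\,\dv=2\pi n$ is what forces the asymptotic behaviour $u\to0$ as $|z|\to\infty$. Conversely, any solution $u$ of this equation with $u\to0$ at infinity reconstructs a vortex having $\alpha^{-1}(0)=\bigcup_j\{z_j\}$ via $A^{0,1}=-\dbar\log\alpha$, and since $\CC$ is simply connected this reconstruction is canonical modulo the $C^\infty(\CC,S^1)$ gauge action. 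So the theorem is equivalent to the assertion that the displayed PDE has exactly one solution with $u\to 0$ at infinity.

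For the PDE I would split off the singular part: fix a smooth model $u_0$ with $u_0=m_j\log|z-z_j|^2$ near each $z_j$ and $u_0\equiv0$ for $|z|$ large, so that the remainder $w:=u-u_0$ must be smooth, bounded, and decaying, and satisfies a genuinely smooth semilinear equation $\Delta w=2\bigl(e^{u_0+w}-1\bigr)+\rho_0$ for a fixed smooth compactly supported function $\rho_0$, whose nonlinearity $w\mapsto 2e^{u_0+w}$ is monotone increasing. Existence then follows by the method of sub- and super-solutions and monotone iteration in a suitable weighted Sobolev space --- one exhibits an ordered pair of barriers adapted to the singular model $u_0$ --- with elliptic regularity upgrading the limit to a smooth solution, and exponential decay of $1-|\alpha|^2$ at infinity comes from the fact that there the equation linearizes to $\Delta w\approx 2w$, so any $L^2$ solution must decay exponentially. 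Uniqueness is where the monotonicity is decisive: two such solutions $w_1,w_2$ have difference $v:=w_1-w_2$ solving $\Delta v=c(x)\,v$ with $c(x)=2e^{u_0}\tfrac{e^{w_1}-e^{w_2}}{w_1-w_2}\ge0$, and multiplying by $v$ and integrating over $\CC$ --- justified because $v$ and $\nabla v$ decay --- gives $\int_\CC|\nabla v|^2+\int_\CC c\,v^2=0$, hence $v\equiv 0$.

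It remains to deduce the converse from an arbitrary finite-energy solution $(A,\alpha)$ with vortex number $n$. When $n=0$ the integral of the nonnegative function $1-|\alpha|^2$ vanishes, so $|\alpha|\equiv1$; then $\ast F_A=0$, so $A$ is flat, hence gauge-equivalent to the product connection, under which $\alpha$ becomes a holomorphic function of modulus one, hence constant, so $(A,\alpha)$ is gauge-equivalent to $(0,1)$. When $n\ge1$ we have $\alpha\not\equiv0$ (otherwise the energy is infinite), so $\alpha^{-1}(0)$ is a finite set with well-defined multiplicities, and the key point is that the total multiplicity equals $n$: this follows from $n=\tfrac{1}{2\pi}\int_\CC(1-|\alpha|^2)\,\dv=\tfrac{i}{2\pi}\int_\CC F_A$ together with Stokes' theorem applied to $A^{0,1}=-\dbar\log\alpha$ on the complement of small disks about the zeros, which identifies the right-hand side with the total vanishing order of $\alpha$ (an argument-principle computation in which finite energy is exactly what kills the boundary contribution at large radius). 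Labelling these zeros with multiplicity as $z_1,\dots,z_n$, the function $\log|\alpha|^2$ solves the displayed PDE with precisely the singular part $4\pi\sum_j m_j\delta_{z_j}$ and tends to $0$ at infinity, hence by uniqueness it coincides with the solution constructed above; comparing $\alpha$ with the associated standard solution $\alpha^{\mathrm{std}}$ --- both locally holomorphic with the same zero divisor and the same modulus --- the ratio $g=\alpha/\alpha^{\mathrm{std}}$ supplies the gauge transformation relating the two, which completes the proof.

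I expect the genuine difficulty to be purely analytic and concentrated near infinity: proving that a finite-energy vortex indeed has $|\alpha|\to1$ with an exponential rate and controlling both the connection and the final gauge transformation there, together with making the sub/super-solution scheme and the integration-by-parts arguments rigorous on the noncompact domain $\CC$ (the choice of weighted function spaces and of explicit barriers). The algebraic and geometric input --- the reduction to the scalar equation, the identification of the total vanishing order with $n$, and the final gauge-fixing --- should by contrast be routine.
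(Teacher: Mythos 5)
The paper states this theorem purely as a citation to Jaffe--Taubes and to Taubes's 1980 vortex paper and offers no in-text proof, so the comparison is against the cited argument rather than any argument in the body of the paper. Your reconstruction is essentially that argument: the reduction of the vortex equations to the scalar Taubes/Kazdan--Warner equation $\Delta u=2(e^u-1)+4\pi\sum_j m_j\delta_{z_j}$ with $u\to0$ at infinity, the argument-principle computation identifying $\frac{i}{2\pi}\int_\CC F_A$ with the total vanishing order to show the zero divisor has degree $n$, and the final gauge-fixing by $g=\alpha/\alpha^{\mathrm{std}}$ (which is well defined and of unit modulus because the two sections have the same divisor and the same pointwise norm) all match. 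The one step where you genuinely diverge is existence: Jaffe--Taubes minimize a strictly convex functional for the regular part $w$ over a weighted Sobolev space on $\CC$, obtaining coercivity, decay, and uniqueness all from the convexity at once, whereas you propose monotone iteration between sub- and supersolutions and then a separate integration-by-parts argument for uniqueness. Both are viable, but your route requires constructing explicit ordered barriers for $\Delta w=2(e^{u_0+w}-1)+\rho_0$ on the noncompact plane --- note that $\overline{w}\equiv0$ does \emph{not} serve as a supersolution because $\rho_0$ has no sign, so the barriers need to compensate for $\rho_0$ and for the region where $u_0$ is being cut off --- which is precisely the technical overhead the variational argument sidesteps; you correctly flag this and the weighted-space bookkeeping as the real work. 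No gap, just a different packaging of the analytic core.
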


The biholomorphisms $\C_n\approx\text{Sym}^n(\CC)\approx\CC^n$ used in this paper are given by
\begin{equation}
\label{eqn:biholo}
\c\mapsto\lbrace z_1,\ldots,z_n\rbrace\mapsto(\sigma_1,\ldots,\sigma_n),\indent \sigma_k=\sum_{j=1}^n z_j^k=\frac{1}{2\pi}\int_\CC z^k(1-|\alpha|^2)\dv
\end{equation}
and the origin $0\in \CC^n$ corresponds to the unique ``symmetric'' vortex $(A,\alpha)$ satisfying $\alpha^{-1}(0)=0$ (see \cite{Taubes:GrtoSW}*{\S2.b.4, \S2.b.5, Lemma 2.3} which also specifies the natural $S^1$-action on $\C_n$, under which this ``symmetric'' vortex is the unique fixed point).

Given $\mu\in C^\infty(S^1,\CC)$ and $\nu\in C^\infty(S^1,\RR)$, the function
$$\h_{\mu\nu}:\C_n\to\RR,\indent (A,\alpha)\mapsto\frac{1}{4\pi}\int_\CC(2\nu|z|^2+\mu\bar z^2+\bar\mu z^2)(1-|\alpha|^2)\dv$$
induces a time-dependent Hamiltonian vector field for a particular K{\"a}hler metric on $\C_n$, whose closed integral curves $\c(t):S^1\to\C_n$ satisfy
\begin{equation}
\label{eqn:corbit}
\frac{i}{2}\c_*(\partial_t)^{(1,0)}+\nabla^{(1,0)}\h_{\mu\nu}|_\c=0
\end{equation}
where $\nabla^{(1,0)}$ denotes the holomorphic part of the gradient. A solution is \textit{nondegenerate} if the linearization of this equation, with respect to a certain covariant derivative, at the solution has trivial kernel (see \cite{Taubes:ECH=SWF2}*{\S2.b.3}).

\begin{theorem}(\cite{Taubes:ECH=SWF2}*{\S2.b})
\label{thm:corbitL}
Let $(\mu,\nu)$ denote the pair associated with an $L$-flat nondegenerate Reeb orbit $\gamma$ of a contact 3-manifold, and $n$ a positive integer. If $\gamma$ is elliptic then there is a single (nondegenerate) solution to~\eqref{eqn:corbit}, the constant map to the unique ``symmetric'' vortex. The same result holds if $\gamma$ is hyperbolic and $n=1$. If $n>1$ and $\gamma$ is hyperbolic, then there are no solutions. 
\end{theorem}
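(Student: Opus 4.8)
The plan is to reduce the statement to a direct analysis of the integral curve equation \eqref{eqn:corbit} on $\C_n$, exploiting the very explicit description of the vortex moduli space and the special structure of the pair $(\mu,\nu)$ attached to an $L$-flat orbit. First I would recall from \cite{Taubes:ECH=SWF2}*{\S2.b} that for an $L$-flat Reeb orbit the Hamiltonian $\h_{\mu\nu}$ becomes essentially quadratic: along the orbit the contact form is flat, so the functions $\mu\in C^\infty(S^1,\CC)$ and $\nu\in C^\infty(S^1,\RR)$ are constants determined by the linearized Reeb flow (the rotation number in the elliptic case, the return map's eigenvalue in the hyperbolic case). Under the biholomorphism $\C_n\approx\CC^n$ given by the symmetric functions $\sigma_k$, the Hamiltonian vector field of $\h_{\mu\nu}$ with respect to the K\"ahler metric is then explicitly computable, and \eqref{eqn:corbit} becomes a system of linear (or linearizable) ODEs for $t\mapsto(\sigma_1(t),\dots,\sigma_n(t))$.

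The next step is the case analysis. For the elliptic case with any $n\ge1$: the key observation is that the symmetric vortex, i.e.\ the origin $0\in\CC^n$ where $\alpha^{-1}(0)=\{0\}$ with multiplicity $n$, is a fixed point of the Hamiltonian flow because $\h_{\mu\nu}$ and the metric are both invariant under the $S^1$-rotation action on $\CC$ that fixes the symmetric vortex; hence the constant map at the symmetric vortex solves \eqref{eqn:corbit}. To see it is the \emph{only} solution, I would linearize \eqref{eqn:corbit} around this constant map and show that the linearized operator has no nonzero closed integral curves: the eigenvalues of the linearization are governed by the rotation number of the elliptic orbit, and $L$-flatness plus nondegeneracy of $\gamma$ forces these to be non-integer multiples of $2\pi i$, so no nonconstant periodic solution exists, and a global argument (the flow contracts toward or expands away from the symmetric vortex on the $\sigma_k$ coordinates) rules out other closed orbits away from the fixed point. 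For the hyperbolic case: when $n=1$, $\C_1\approx\CC$ and the analysis is the same one Taubes uses in \cite{Taubes:ECH=SWF1} to match a hyperbolic Reeb orbit with a single reducible-type solution, giving exactly one nondegenerate $\c(t)$. When $n>1$ and $\gamma$ is hyperbolic, I would show directly that \eqref{eqn:corbit} has no closed solutions: the monodromy of the linearized flow on the higher symmetric powers $\sigma_2,\dots,\sigma_n$ is built from products of the hyperbolic return-map eigenvalue $\Lambda$ with $|\Lambda|\ne1$, so the time-$1$ return map on $\CC^n$ has no eigenvalue equal to $1$ on the relevant subspace, obstructing any periodic orbit; one must also check there is no periodic orbit \emph{at infinity} or on the boundary strata of $\text{Sym}^n(\CC)$, which follows from the properness and growth estimates for $\h_{\mu\nu}$.

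The main obstacle I expect is the global (as opposed to infinitesimal) step: showing that \emph{all} closed integral curves of the Hamiltonian vector field — not merely those near the symmetric vortex — are accounted for. Linearization only controls solutions in a neighborhood of the constant map; to exclude far-away closed orbits I would need the a priori bound $|\alpha|\le1$ together with monotonicity of $\h_{\mu\nu}$ (or of $|z|^2$-moments) along the flow to confine and then eliminate candidate periodic trajectories, using that the K\"ahler metric on $\C_n$ is complete and $\h_{\mu\nu}$ is proper in the relevant coordinates. This is the technical heart of \cite{Taubes:ECH=SWF2}*{\S2.b}; in a self-contained writeup I would cite that estimate rather than reprove it, and simply assemble the three cases. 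The remaining bookkeeping — verifying nondegeneracy of the surviving solution via the covariant-derivative linearization of \cite{Taubes:ECH=SWF2}*{\S2.b Part 3}, and confirming it really is the constant map to the symmetric vortex — is then routine.
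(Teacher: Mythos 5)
Your overall strategy---pass to the biholomorphism $\C_n\approx\CC^n$ via the symmetric functions $\sigma_k$, use the explicit form of $(\mu,\nu)$ for an $L$-flat orbit, and then do a case analysis---is the right framework and tracks the structure of Taubes' argument. The elliptic case is handled correctly: since $\mu=0$ there, $\h_{\mu\nu}$ is invariant under the $S^1$-rotation of $\CC$, the symmetric vortex is the unique invariant point, it is a critical point of $\h_{\mu\nu}$, and nondegeneracy of $\gamma$ forces the linearized return map to have no root-of-unity eigenvalues. The hyperbolic $n=1$ case is also fine: there $\C_1\cong\CC$ and the $\mu$-term $\frac{1}{4\pi}\int(\mu\bar z^2+\bar\mu z^2)(1-|\alpha|^2)$ equals $\mathrm{Re}(\bar\mu\,\sigma_1^2)$, which is \emph{quadratic} in $\sigma_1$, so the origin is still a critical point and the analysis reduces to the ECH=SWF1 argument you cite.

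The gap is in the hyperbolic $n>1$ case, and it is twofold. First, the monodromy argument you give does not prove nonexistence. For an affine flow $\dot\sigma = M\sigma+b$ with $M-I$ invertible, having no eigenvalue $1$ for the time-$1$ map implies a \emph{unique} fixed point at $(I-M)^{-1}b$, not zero fixed points; and for a genuinely nonlinear flow (the K\"ahler metric on $\C_n$ is not flat for $n>1$), the linearized eigenvalues do not control the existence of periodic orbits at all. So ``no eigenvalue $1$'' is exactly the wrong tool to show there are \emph{no} solutions. Second, you miss the decisive structural observation that distinguishes $n\ge2$ from $n=1$: when $n\ge2$ the identity $\sigma_2 = \frac{1}{2\pi}\int z^2(1-|\alpha|^2)\,\dv$ makes the $\mu$-contribution to $\h_{\mu\nu}$ equal to the \emph{linear} function $\tfrac{1}{2}(\mu\bar\sigma_2+\bar\mu\sigma_2)$, so $\nabla^{(1,0)}\h_{\mu\nu}$ has a nonzero constant ($\propto\mu$) component at the origin. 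Hence the symmetric vortex is not even a critical point of $\h_{\mu\nu}$ and the constant map is not a solution of \eqref{eqn:corbit}. Ruling out non-constant closed solutions then requires a genuinely global argument exploiting the structure of the K\"ahler metric and the time-dependence of $\mu$ along an $L$-flat hyperbolic orbit, not a monodromy count; as you correctly anticipate, that is the content one should import from \cite{Taubes:ECH=SWF2}*{\S2.b} rather than reconstruct by linearization. (Note also that for an $L$-flat hyperbolic orbit $\mu$ is not actually constant but has a prescribed exponential time-dependence; this does not rescue the monodromy argument but is relevant for the global step.)
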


\begin{remark}
The solutions granted by Theorem~\ref{thm:corbitL} are used to define the isomorphism between ECH and monopole Floer homology. The fact that there are no solutions when $n>1$ and $\gamma$ is hyperbolic is not a problem, because such a pair $(\gamma,n)$ does not arise in an admissible orbit set.
\end{remark}

Now we consider $J$-holomorphic curves in the completion $(\overline X,\omega,J)$ of a symplectic cobordism $(X,\omega):(Y_+,\lambda_+)\to(Y_-,\lambda_-)$, including the special case of a symplectization $\overline X=\RR\times Y$. Denote by $\mM(\Theta^+,\Theta^-)$ the moduli space of $J$-holomorphic currents in $\overline X$ asymptotic to orbit sets $\Theta^\pm$ (see \cite{Gerig:taming}*{\S2.2}), and by $\mM_I(\Theta^+,\Theta^-)$ the subset of such currents with ECH index $I\in\ZZ$. Let $\pi:N_C\to C$ be the (holomorphic) normal bundle of an immersed connected $J$-holomorphic curve $C$ in $\mM(\Theta^+,\Theta^-)$, and denote by $S_{N_C}\subset N_C$ the unit circle subbundle. Form the \textit{n-vortex bundle}
$$\C_{N_C,n}:= S_{N_C}\times_{S^1}\C_n$$
whose projection onto $C$ will also be denoted by $\pi$. With respect to a Hermitian metric and compatible connection on $N_C$, the $(1,0)$-part of its vertical tangent space is
$$T_{1,0}^\text{vert}\C_{N_C,n}=(\ker d\pi)_{1,0}=S_{N_C}\times_{S^1}T_{1,0}\C_n$$
Sections $\c\in\Gamma(\C_{N_C,n})$ can be viewed as $S^1$-invariant maps $S_{N_C}\to\C_n$, so their covariant derivative can be taken and restricted to the horizontal subspace $T^\text{hor}S_{N_C}$. This defines a ``del-bar'' operator
$$\c\mapsto \dbar\c\in\Gamma(\c^* T_{1,0}^\text{vert}\C_{N_C,n}\otimes T^{0,1}C)$$
Given the pair $(\nu_C,\mu_C)\in\Gamma(T^{0,1}C)\times\Gamma(T^{0,1}C\otimes N_C^2)$ associated with the deformation operator $D_C$ of $C$ (see \cite{Gerig:taming}*{\S2.5}), we define the section $\h_{\nu_C\mu_C}$ of $\pi^* T^{0,1}C\to\C_{N_C,n}$ pointwise by 
$$\h_{\nu_C\mu_C}\big(p,(A_p,\alpha_p)\big)=\frac{1}{4\pi}\int_\CC\left[2\nu_{C,p}|z|^2+(\mu_{C,p}\bar z^2+\bar\mu_{C,p} z^2)\right](1-|\alpha_p|^2)\dv$$
after identifying a fiber of $N_C$ with $\CC$ and identifying a point in $\C_{N_C,n}$ with $(p,(A_p,\alpha_p))\in C\times\C_n$. Denote by $\nabla^{1,0}\h_{\nu_C\mu_C}$ the corresponding section of $T_{1,0}^\text{vert}\C_{N_C,n}\otimes\pi^* T^{0,1}C$.

\bigskip
As will become evident in Section~\ref{Maps between ECH and HM}, we are interested in certain sections of $\C_{N_C,n}$ asymptotic to zero on the ends of $C$. They should be considered as objects that are ``halfway'' between $J$-holomorphic curves and SW instantons. They were originally introduced in \cite{Taubes:GrtoSW}*{\S3}, see also \cite{Taubes:ECH=SWF2}*{\S2.e}, and are defined as follows.

\begin{definition}
\label{def:csurface}
The subspace $\zZ_0\subset L^2_1(\C_{N_C,n})$ consists of those sections $\c$ satisfying
\begin{equation}
\label{eqn:csurface}
\dbar\c+\c^*\nabla^{1,0}\h_{\nu_C\mu_C}=0
\end{equation}
\end{definition}

\noindent We can spell out this subspace more concretely using the biholomorphism~\eqref{eqn:biholo}. Namely,~\eqref{eqn:biholo} induces a bundle isomorphism $\C_{N_C,n}\cong \bigoplus^n_{j=1}N_C^j$ and under this isomorphism~\eqref{eqn:csurface} takes the form
\begin{equation}
\label{eqn:csurface2}
\dbar\eta+\nu_C\aleph(\eta)+\mu_C\FF(\eta)=0
\end{equation}
for sections $\eta\in L^2_1(\bigoplus^n_{j=1}N_C^j)$. Here, $\dbar$ is the del-bar operator with respect to the Hermitian connection on $N_C$,
$$\aleph:\Gamma(\bigoplus^n_{j=1}N_C^j)\to\Gamma(\bigoplus^n_{j=1}N_C^j)$$
is the map that multiplies the $j^\text{th}$ summand by $j$, and
$$\FF:\Gamma(\bigoplus^n_{j=1}N_C^j)\to\Gamma(\bigoplus^n_{j=1}N_C^{j-2})$$
is some fiber-preserving bundle map that is not $\RR$-linear unless $n=1$. When $n=1$, this map $\FF$ is the complex conjugation operator and so~\eqref{eqn:csurface2} becomes
$$\dbar\eta+\nu_C\eta+\mu_C\bar\eta=0$$
for sections $\eta\in L^2_1(N_C)$. Thus, $\zZ_0$ is identified with $\ker(D_C)$ when $n=1$. For example, if $C$ is a Fredholm index zero curve cut out transversely then $\zZ_0$ is a point, the constant map to the unique ``symmetric'' vortex.

\begin{remark}
\label{Zregular1}
In the setting of \cite{Taubes:ECH=SWF1} where $\overline X$ is a symplectization, the only multiply covered curves were $\RR$-invariant cylinders with $\mu_C=0$. The nonlinear map $\FF$ only played a role in the setting of \cite{Taubes:SWGrBook} where $\overline X$ is a closed manifold, due to the existence of multiply covered tori with $\mu_C\ne0$. In this paper, the space of solutions to~\eqref{eqn:csurface} for $n>1$ will be of concern whenever $C$ is a special curve.
\end{remark}

In \cite{Taubes:ECH=SWF2}*{\S 2.f}, Taubes introduces the appropriate Morrey spaces as certain completions of the spaces of compactly supported sections of the bundles $\c^* T_{1,0}^\text{vert}\C_{N_C,n}$ and $\c^* T_{1,0}^\text{vert}\C_{N_C,n}\otimes T^{0,1}C$. They are denoted by $\kK_\c$ and $\lL_\c$, respectively. A nice feature of these Banach space completions is that the small normed elements can be used to define deformations of $\c$ (see \cite{Taubes:ECH=SWF2}*{Equation 2-28}).

At any given $\c\in L^2_1(\C_{N_C,n})$, define the $\RR$-linear operator $\mathbf\Delta_\c:\kK_\c\to \lL_\c$ to be the linearization of the operator in~\eqref{eqn:csurface} which cuts out $\zZ_0$. From the viewpoint of $\bigoplus^n_{j=1}N_C^j$ the corresponding linearization of the operator in~\eqref{eqn:csurface2} is
\begin{equation}
\label{eqn:Delta}
\Delta_\c:\zeta\mapsto \dbar \zeta+\nu_C\aleph(\zeta)+\mu_Cd\FF_\c(\zeta)
\end{equation}
When $\c\in\zZ_0$ the operators $\mathbf\Delta_\c$ and $\Delta_\c$ are conjugate to each other using a bundle isomorphism induced from~\eqref{eqn:biholo} (see \cite{Taubes:GrtoSW}*{Proposition 3.2(5)}). An element $\c\in \zZ_0$ is called \textit{regular} if $\coker(\mathbf\Delta_\c)=0$, or equivalently $\coker(\Delta_\c)=0$.

\subsection{Maps between ECH and HM}
\label{Maps between ECH and HM}

\indent\indent
In \cite{Taubes:ECH=SWF1}, Taubes defined a canonical isomorphism of relatively graded $\ZZ/d\ZZ$-modules (with $\ZZ/2\ZZ$ coefficients)
\begin{equation}
\label{eqn:isomorphism}
ECH^L_*(Y,\lambda,\Gamma,J)\cong\Hfrom^{-*}_L(Y,\lambda,\s_\xi+\PD(\Gamma),J,r)
\end{equation}
under the assumption that $r$ is sufficiently large and $(\lambda,J)$ is a generic\footnote{\label{generic1}Genericity here is in the sense that $J$ is chosen from a residual subset of almost complex structures (endowed with the $C^\infty$ Fr\'echet space topology) for which the $L$-filtered ECH is well-defined.} $L$-flat pair, where $d$ denotes the divisibility of $c_1(\s_\xi+\PD(\Gamma))=c_1(\xi)+2\PD(\Gamma)$ in $H^2(Y;\ZZ)/\text{Torsion}$. Taking the direct limit as $L\to\infty$, Taubes' isomorphism becomes
$$ECH_*(Y,\xi,\Gamma)\cong\Hfrom^{-*}(Y,\s_\xi+\PD(\Gamma))$$
A detailed explanation can be found in \cite{HutchingsTaubes:Arnold2}, specifically the proof of \cite{HutchingsTaubes:Arnold2}*{Theorem 1.3}. Taubes' isomorphism also preserves the absolute gradings by homotopy classes of oriented 2-plane fields
$$ECH_j(Y,\xi)\cong\Hfrom^j(Y)$$
where $ECH_*(Y,\xi):=\bigoplus_{\Gamma\in H_1(Y;\ZZ)}ECH_*(Y,\xi,\Gamma)$ and $j\in J(Y)$ (see \cite{Gardiner:gradings}), and it also intertwines the respective $U$-maps (see \cite{Taubes:ECH=SWF5}*{Theorem 1.1}). We now briefly explain how this isomorphism~\eqref{eqn:isomorphism} was constructed.

\begin{theorem}[Taubes]
\label{thm:generators}
Fix $L>0$ and a generic\textsuperscript{\normalfont\ref{generic1}} $L$-flat pair $(\lambda,J)$ on the nondegenerate contact 3-manifold $(Y,\lambda)$. Then for $r$ sufficiently large and $\Gamma\in H_1(Y;\ZZ)$, there is a canonical bijection from the set of generators of $\Cfrom_L^*(Y,\lambda,\s_\xi+\PD(\Gamma),J,r)$ to the set of generators of $ECC^L_*(Y,\lambda,\Gamma,J)$.
\end{theorem}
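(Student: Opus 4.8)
The plan is to follow Taubes' strategy from \cite{Taubes:ECH=SWF1} verbatim, the point being that the large-$r$ structure of solutions to the equations~\eqref{SW3} localizes near the zero set of the spinor component $\alpha$, which in the limit becomes a union of Reeb orbits. First I would recall that a generator of $ECC^L_\ast(Y,\lambda,\Gamma,J)$ is an admissible orbit set $\Theta=\{(\gamma_i,m_i)\}$ with $\sum m_i[\gamma_i]=\Gamma$, total symplectic action $\aA(\Theta)<L$, and $m_i=1$ whenever $\gamma_i$ is hyperbolic; whereas a generator of $\Cfrom_L^\ast(Y,\lambda,\s_\xi+\PD(\Gamma),J,r)$ is an irreducible solution $\c=[A,\psi]$ to Taubes' equations~\eqref{SW3} with energy $E(\c)<2\pi L$, and these are cut out transversally by the choice of generic $\mu$. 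The map $\Theta\mapsto\c_\Theta$ is built by a gluing construction: one starts with a model solution supported in tubular neighborhoods of the $\gamma_i$, obtained by grafting the $m_i$-vortex solutions provided by Theorem~\ref{thm:corbitL} (for elliptic orbits these are the constant maps to the symmetric vortex; for hyperbolic orbits $m_i=1$ forces the single $1$-vortex), and then perturbs to an honest solution using the implicit function theorem, whose hypotheses hold because $r$ is large and $(\lambda,J)$ is $L$-flat.

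The key steps, in order, are: (1) \emph{A priori estimates and concentration}: show that for $r$ large, any irreducible solution with $E(\c)<2\pi L$ has $|\alpha|$ close to $1$ away from an $r^{-1/2}$-neighborhood of a union of closed Reeb orbits, and that the curvature $F_A$ concentrates there; this is the content of \cite{Taubes:ECH=SWF1}*{\S2} and uses the $L$-flatness to control the geometry near short orbits. (2) \emph{From a solution to an orbit set}: extract the limiting orbit set $\Theta$ with multiplicities from the concentration set, check $\aA(\Theta)<L$ from the energy bound, and check admissibility — this last point is where Theorem~\ref{thm:corbitL} enters, since a hyperbolic orbit with multiplicity $>1$ would have to carry a solution of~\eqref{eqn:corbit}, and there are none. (3) \emph{From an orbit set to a solution}: conversely, given $\Theta$, construct the approximate solution from the vortex data and run the contraction-mapping/Newton iteration; nondegeneracy of the relevant deformation operator (again from $L$-flatness plus genericity) makes this work and shows the resulting $\c_\Theta$ is nondegenerate. (4) \emph{Bijectivity}: show the two assignments are mutually inverse, i.e. that every solution in the given energy range arises from exactly one orbit set via the gluing construction — this follows from a uniqueness statement in the implicit function theorem together with the concentration estimates of step (1).

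The main obstacle is step (1) together with the uniqueness half of step (4): proving that the large-$r$ solutions are \emph{exactly} the ones produced by the gluing construction, with no extras, requires the full force of Taubes' a priori analysis — the pointwise and integral bounds on $\alpha$, $\beta$, and $F_A$, the identification of the ``blow-up'' locus with nondegenerate Reeb orbits, and the quantitative control needed to feed the implicit function theorem. Since the statement of Theorem~\ref{thm:generators} is precisely Taubes' result specialized to the relevant $(Y,\lambda,J)$, I would not reprove this from scratch but instead cite \cite{Taubes:ECH=SWF1} and the exposition in \cite{HutchingsTaubes:Arnold2} (the proof of \cite{HutchingsTaubes:Arnold2}*{Theorem 1.3}), indicating only which ingredients — the vortex classification, $L$-flatness, and genericity of $\mu$ — are being invoked.
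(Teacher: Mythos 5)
Your proposal is correct and follows essentially the same route as the paper, which likewise treats this as Taubes' result from \cite{Taubes:ECH=SWF1} (Theorem 4.2 there) and only sketches the construction: assign vortex sections $\c_{\Theta_i}:S^1\to\C_{m_i}$ to each pair $(\Theta_i,m_i)$, build a candidate $(A_r,(\alpha_r,0))$, perturb via the contraction mapping/implicit function theorem, and invoke Theorem~\ref{thm:corbitL} (plus $L$-flatness and genericity) for existence and uniqueness of the vortex data. You also correctly identify that the heavy lifting — the a priori concentration estimates and the proof that every large-$r$ solution with bounded energy arises this way — lives in Taubes' papers and in the exposition around \cite{HutchingsTaubes:Arnold2}*{Theorem 1.3}, which is exactly what the paper itself does; no further rewriting is needed.
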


As shown in \cite{Taubes:ECH=SWF1}*{Theorem 4.2}, the isomorphism~\eqref{eqn:isomorphism} is actually established on generators of the chain complexes, so that for any given admissible orbit set $\Theta\in ECC^L_*(Y,\lambda,\Gamma,J)$ there exists a unique irreducible monopole
$$\c_\Theta=[A_\Theta,(\alpha_\Theta,\beta_\Theta)]\in\Cfrom^{-*}_L(Y,\lambda,\s_\xi+\PD(\Gamma),J,r)$$
To construct $\c_\Theta$, an arbitrary smooth map $\c_{\Theta_i}:S^1\to\C_{m_i}$ is first assigned to each pair $(\Theta_i,m_i)\in\Theta$. From such a map, a ``potential candidate'' $(A_r,(\alpha_r,0))$ for a monopole is constructed which almost solves Taubes' perturbed Seiberg--Witten equations~\eqref{SW3} when $r$ is large; this would not be possible if $\Theta_i$ was not a Reeb orbit (see \cite{Taubes:ECH=SWF2}*{Lemma 3.4}). When $\c_{\Theta_i}$ satisfies~\eqref{eqn:corbit}, perturbation theory is then used to find an honest solution nearby this candidate. In fact, by Theorem~\ref{thm:corbitL} there is precisely one such map $\c_{\Theta_i}$.

As shown in \cite{Taubes:ECH=SWF1}*{Theorem 4.3}, the chain complex differentials also agree: Given two generators $\Theta^\pm\in ECC^L_*(Y,\lambda,\Gamma,J)$ and their corresponding generators $\c_{\Theta^\pm}\in\Cfrom^{-*}_L(Y,\lambda,\s_\xi+\PD(\Gamma),J,r)$, for $r$ sufficiently large there is an orientation-preserving diffeomorphism between $\mM_1(\Theta^+,\Theta^-)$ and $\M_1(\c_{\Theta^-},\c_{\Theta^+};\s)$ which is $\RR$-equivariant with respect to the translation actions. In analogy with the construction for chain complex generators, this diffeomorphism
\begin{equation}
\label{eq:bijection}
\Psi_r:\mM_1(\Theta^+,\Theta^-)\to\M_1(\c_{\Theta^-},\c_{\Theta^+};\s)
\end{equation}
is constructed as follows (see \cite{Taubes:ECH=SWF2}*{\S5} for more details). Given $\cC\in\mM_1(\Theta^+,\Theta^-)$ there exists a complex line bundle $E\to\RR\times Y$ and a ``potential candidate''
$$(A^*,\Psi^*)\in\text{Conn}(E)\oplus\Gamma(E\oplus K^{-1}E)$$
for a SW instanton which almost solves Taubes' perturbed Seiberg--Witten equations~\eqref{SW4} when $r$ is large. Here, $E$ has a section whose zero set (with multiplicity) is $\cC$, and away from $\cC$ the bundle $E$ is identified with the trivial bundle. Away from $\cC$ the pair $(A^*,\Psi^*)$ is close to $(A_0,(1,0))$, where $A_0$ is the flat connection on $E$ coming from the product structure. Near a component $(C,d)\in\cC$ the pair $(A^*,\Psi^*)$ is determined by the zero-section $0\in\zZ_0\subset\Gamma(\C_{N_C,d})$. Then an implicit function theorem argument may be used to perturb $(A^*,\Psi^*)$ to a SW instanton $\Psi_r(\cC)$.

\begin{remark}
\label{Zregular2}
This remark is intended to complement Remark~\ref{Zregular1}. Roughly speaking, we can generalize the construction of $(A^*,\Psi^*)$ using nontrivial sections in $\zZ_0\subset L^2_1(\C_{N_C,d})$, such that if all $\zZ_0$ were regular then sufficiently $L^\infty$-small elements of $\zZ_0$ would parametrize a neighborhood of $\Psi_r(\cC)$ in the SW moduli space, and if some $\zZ_0$ were zero-dimensional (with additional hypotheses on $(C,d)$) then there would be an associated SW instanton for each point in that $\zZ_0$. When $d=1$, the operator~\eqref{eqn:Delta} is precisely the deformation operator $D_C$ which has trivial cokernel (for generic $J$), and hence $\zZ_0$ consists of regular points. When $d>1$, there is no guarantee that any point of $\zZ_0$ is regular if $\mu_C\ne 0$. Luckily for ECH, if $d>1$ then $C$ is an $\RR$-invariant cylinder with $\mu_C=0$, in which case $\zZ_0$ consists of a single regular point (the zero-section).
\end{remark}

Given a basepoint $y\in Y$, there is an analogous bijection between point-constrained moduli spaces $\mM_2(\Theta^+,\Theta^-;y)\to\M_2(\c_{\Theta^-},\c_{\Theta^+};\s,y)$ used to relate the respective $U$-maps. It is established in \cite{Taubes:ECH=SWF5}*{Theorem 2.6} and closely follows the construction of $\Psi_r$.

We end this section by pointing the reader to Taubes' papers for the various properties of $\Psi_r$. The assertion that $\Psi_r$ maps $\mM_1(\Theta^+,\Theta^-)$ onto a union of components of $\M_1(\c_{\Theta^-},\c_{\Theta^+};\s)$ that contain solely nondegenerate SW instantons is given in \cite{Taubes:ECH=SWF3}*{\S3.a}. The proof that $\Psi_r$ is surjective consists of three main arguments spelled out in \cite{Taubes:ECH=SWF4}*{\S3--\S7}. First, certain global properties of SW instanton solutions to Taubes' perturbed Seiberg--Witten equations are established in \cite{Taubes:ECH=SWF4}*{\S3} and in \cite{Taubes:ECH=SWF4}*{Lemma 5.2, Lemma 5.3}. Second, these global results are used to assign an element in $\mM_1(\Theta^+,\Theta^-)$ to a given SW instanton in $\M_1(\c_{\Theta^-},\c_{\Theta^+};\s)$ and is done so in \cite{Taubes:ECH=SWF4}*{\S4} and in \cite{Taubes:SWtoGr}*{\S5--\S7}. Third, these assignments are given by the map $\Psi_r$ and is done so in \cite{Taubes:ECH=SWF4}*{\S6--\S7}.

\section{Equating ECH and HM cobordism counts}
\label{Equating ECH and HM cobordism counts}

\indent\indent
In light of Section~\ref{Exceptional spheres}, we now fix $(X,\omega,\s)$ such that $E\cdot \tau_\omega(\s)\ge-1$ for all $E\in\eE_\omega$. Subsequently, $(-\partial X_0,\lambda_\s)$ and $(\overline X_0,\omega,J)$ are also fixed. We also fix an ordering of the zero-circles of $\omega$, as well as a homology orientation of $X_0$ (hence of $X$). Denote by
$$\s_\s:=\s_\omega+\tau_\omega(\s)$$
the spin-c structure on $X_0$ (and on $\overline X_0$) that corresponds to the relative class $\tau_\omega(\s)$, identified with the spin-c structure $\s_{\xi_0}+1$ on the ends of $\overline X_0$. Denote by $E\to\overline X_0$ the complex line bundle for the spinor decomposition $\SS_+=E\oplus K^{-1}E$ associated with $\s_\s$. Now, make the following choices:

\bigskip
	$\bullet$ an integer $I\ge0$,
	
	$\bullet$ an integer $p\in\lbrace0,\ldots,I\rbrace$ such that $I-p$ is even,
	
	$\bullet$ an ordered set of $p$ disjoint oriented loops $\bar\eta:=\lbrace\eta_1,\ldots,\eta_p\rbrace\subset X_0$,
	
	$\bullet$ a set of $\frac{1}{2}(I-p)$ disjoint points $\bar z:=\lbrace z_1,\ldots,z_{(I-p)/2}\rbrace\subset X_0-\bar\eta$.

\bigskip	
\noindent
Recall from \cite{Gerig:taming} that the choice $I=d(\s)$ is used to define $Gr_{X,\omega}(\s)$. Denote by $\M_I(\c,\varnothing;\s_\s,\bar z,\bar\eta)$ the subset of SW instantons $\d=[A,\Psi=(\alpha,\beta)]\in\M_I(\c,\varnothing;\s_\s)$ for which $\alpha\in\Gamma(E)$ vanishes at each point $z_i\in\bar z$ and at some point $w_i$ along each loop $\eta_i\in\bar\eta$.

Although the upcoming Theorem~\ref{thm:moduli} is a statement using $\ZZ/2\ZZ$ coefficients, we specify the coherent orientations and the signed weights attached to SW instantons because it is expected that the theorem also holds over $\ZZ$. To define the sign $q(\d)$ attached to each $\d\in\M_I(\c,\varnothing;\s_\s,\bar z,\bar\eta)$, we build the $\RR^I$-vector space
$$V_\d:=\bigoplus^{\frac{1}{2}(I-p)}_{i=1}E_{z_i}\oplus\bigoplus_{i=1}^{p}\big(E_{w_i}/\nabla_{A}\alpha(T_{w_i}\eta_i)\big)$$
as in \cite{Taubes:Gr=SW}*{\S2.c}. For generic choices of perturbations $\mu\in\Omega^2(\overline X_0)$ that define Taubes' perturbed Seiberg--Witten equations~\eqref{SW4}, the covariant derivative of $\alpha$ along $\eta_i$ at $w_i$ is nonzero and the restriction map
$$\ker(\D_\d)\to V_\d$$
is an isomorphism. Then $q(\d)=\pm1$ depending on whether this restriction map is orientation-preserving or orientation-reversing. Here, $\ker(\D_\d)$ is oriented by the coherent orientations (see Section~\ref{Homology orientations}), and $V_\d$ is naturally oriented because the complex bundle $E$ is oriented, the loops $\gamma_i$ are oriented, and the points $w_i$ are ordered.

\begin{notation}
\label{notation}
Fix an admissible orbit set $\Theta$ with action less than $\rho_\s$ and its corresponding monopole $\c_\Theta$. Since the moduli spaces $\mM_I(\varnothing,\Theta;\tau_\omega(\s),\bar z,\bar\eta)$ and $\M_I(\c_\Theta,\varnothing;\s_\s,\bar z,\bar\eta)$ will appear often, we denote them by $\mM$ and $\M$ unless otherwise specified. We remind the reader that both $\mM$ and $\M$ depend on $(\lambda,J)$ while $\M$ also depends on $r$ (and abstract perturbations).
\end{notation}

\begin{theorem}
\label{thm:moduli}
For an admissible orbit set $\Theta$ with action less than $\rho_\s$, generic $J$ as needed to make~\ref{def:GromovCycle} well-defined, and sufficiently large $r$,
$$\sum_{\cC\in\mM}q(\cC)\equiv \sum_{\d\in\M}q(\d)\;\mod2$$
\end{theorem}

The proof of Theorem~\ref{thm:moduli} is spelled out in the following subsections. A key point is that all relevant constructions of Taubes' isomorphisms which occur in a symplectization and a closed symplectic manifold generalize to (completed) symplectic cobordisms, because the analysis takes place local to the $J$-holomorphic curves. We will construct a multi-valued bijective map\footnote{A multi-valued map is bijective if the image of the domain is the full codomain (surjectivity) and the images of any two points in the domain are disjoint (injectivity).}
$$\Psi_r:\mM\to\M$$
such that $q(\cC)$ is equal to the number of points in $\Psi_r(\cC)$ for each $\cC\in\mM$ (see Definition~\ref{def:multivalued}). The construction is similar to that in Section~\ref{Maps between ECH and HM}, but the analysis associated with $\RR$-invariant cylinders (in \cite{Taubes:ECH=SWF1}) disappears and the analysis associated with holomorphic tori (in \cite{Taubes:SWGrBook}) appears. The map $\Psi_r$ is multi-valued because, for a multiply covered special curve, the associated space $\zZ_0$ may consist of more than a single (possibly non-regular) point. Such a complication was alluded to in Remarks~\ref{Zregular1} and~\ref{Zregular2}.

Here is an outline of the following subsections. Section~\ref{SW instantons from multiply covered} preemptively studies the spaces $\zZ_0$ (given by Definition~\ref{def:csurface}) associated with special curves. The knowledge obtained about $\zZ_0$ is then crucially used in Section~\ref{Kuranishi structures} to construct the multi-valued injective map $\Psi_r:\mM\to\M$. Section~\ref{Surjectivity} then explains surjectivity of $\Psi_r$ while Section~\ref{Proof of Theorem} relates the image of $\Psi_r$ to the weighted count of points in $\mM$, completing the proof of Theorem~\ref{thm:moduli}.

To further ground the reader, similarly to viewing \cite{Taubes:ECH=SWF2}*{\S5--\S7} as a Floer-theoretic variant of the arguments for \cite{Taubes:Gr=SW}*{Proposition 4.1}, the upcoming two sections may be viewed as a Floer-theoretic variant of the arguments for \cite{Taubes:Gr=SW}*{Proposition 4.2} which generalizes \cite{Taubes:Gr=SW}*{Proposition 4.1}.

\subsection{SW instantons from multiply covered tori and planes}
\label{SW instantons from multiply covered}

\indent\indent
In regards to constructing the analogous bijection $\mM_1(\Theta^+,\Theta^-)\to\M_1(\c_{\Theta^-},\c_{\Theta^+};\s)$ over a symplectization (see~\eqref{eq:bijection}), the analysis in \cite{Taubes:ECH=SWF1} to handle covers of $\RR$-invariant cylinders is simple: for an $\RR$-invariant cylinder $C=\RR\times\gamma$, the section $\mu_C$ vanishes and hence the unique ``symmetric'' vortex which solves~\eqref{eqn:corbit} over $\gamma$ extends to the unique ``symmetric'' vortex solution to~\eqref{eqn:csurface2} over $C$. If $\mu_C$ did not vanish, the bundle map $\FF$ would prevent such an extension (since $\FF(0)\ne0$). This unfortunately turns out to be the case for the special curves in the scenario at hand. The appropriate analysis to handle the special tori is given in \cite{Taubes:SWGrBook} and can be mimicked to handle the special planes. A key point to mimicking Taubes' analysis here is that the special planes limit to elliptic orbits \textit{with multiplicity one}, which allows us to bypass the analysis at the ends in this case and instead places us in the simplified setting \cite{Taubes:ECH=SWF2}*{\S5.a.1}.

\begin{prop}
\label{prop:Zcompact}
Let $(C,d)$ be a component of $\cC\in\mM$ where $C$ is a special curve and $d\ge1$. Then the subspace $\zZ_0\subset L^2_1(\bigoplus^d_{j=1}N_C^j)$ carved out by~\eqref{eqn:csurface2} is compact for generic $J$ as needed to make~\ref{def:GromovCycle} well-defined. If $C$ is furthermore a plane, then by replacing $\mu_C$ in the definition of $\zZ_0$ with $t\mu_C$ for any $t\in[0,1]$, the resulting space of sections $\zZ_0^t$ remains compact for each $t$.
\end{prop}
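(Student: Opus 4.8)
The plan is to show that $\zZ_0$ is sequentially compact by first establishing a uniform a priori $C^0$-bound on its elements, then bootstrapping to uniform $C^\infty_{loc}$-control together with uniform exponential decay along the end of $C$ (when $C$ is a plane), and finally invoking Rellich compactness with no escape of mass at the end. Note at the outset that $\zZ_0$ is automatically closed in $L^2_1(\bigoplus^d_{j=1}N_C^j)$, being the zero locus of the continuous (indeed smooth) section $\eta\mapsto\dbar\eta+\nu_C\aleph(\eta)+\mu_C\FF(\eta)$ from \eqref{eqn:csurface2}, and that this operator is first-order elliptic (its symbol is that of $\dbar$, while $\aleph$ and $\FF$ are zeroth order), so interior elliptic estimates and regularity are available. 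The role of ``suitably generic $J$'' is to guarantee that the special curves $C$ and their deformation operators $D_C$ behave as in \cite{Gerig:taming}*{Propositions 3.13, 3.16}, and that the asymptotic Reeb orbits at the ends of the special planes are nondegenerate (which they are, since $(\lambda,J)=(\lambda_\s,J)$ is a generic $L$-flat pair with $\lambda_\s$ nondegenerate); this last point is what makes the relevant asymptotic operators invertible.

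First I would recall the algebraic structure of $\FF$: under the biholomorphism $\C_d\approx\CC^d$ it is the triangular polynomial map lowering tensor degree by two that appears in Taubes' treatment of multiply covered tori in \cite{Taubes:SWGrBook}, and $\nu_C,\mu_C$ are the smooth sections extracted from the deformation operator $D_C$ as in \eqref{eqn:Delta}. When $C$ is a special torus, $C$ is compact and $\nu_C,\mu_C$ are smooth, so the bound $\sup_C|\eta|\le K$, with $K$ depending only on $\nu_C$, $\mu_C$ and the Hermitian geometry of $N_C$, is precisely Taubes' a priori estimate for sections over holomorphic tori in \cite{Taubes:SWGrBook}: one applies the maximum principle to $|\eta_d|^2$ and works downward through the triangular system, using the sign of $\nu_C$ to absorb the $\FF$-terms. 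Since this argument is local on $C$, the identical pointwise bound holds at every point of a special plane $C$ as well; and because an element of $\zZ_0$ additionally lies in $L^2_1$, it decays along the end, so $|\eta|$ is genuinely bounded on all of $C$, with the bound uniform over $\zZ_0$.

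Next I would promote this to decay control on the end of a plane, which is the step where, as remarked before the statement of Proposition~\ref{prop:Zcompact}, the noncompactness causes no trouble. By definition of ``special,'' the negative end of $C$ is asymptotic to an embedded elliptic Reeb orbit of multiplicity one; by Theorem~\ref{thm:corbitL} and $L$-flatness, the pair $(\nu,\mu)$ of this asymptotic model has $\mu=0$, so along the end $\mu_C\to0$ exponentially, while the asymptotic operator $\dbar+\nu_\infty\aleph$ is self-adjoint with trivial kernel. Writing the end as a half-cylinder and running Taubes' ODE-in-a-Hilbert-space argument, any $\eta\in\zZ_0$ satisfies $|\eta(s,\cdot)|\le Ke^{-\delta s}$ for a fixed spectral gap $\delta>0$, with $K$ uniform over $\zZ_0$ once the slice at $s=0$ is controlled — which it is, by the interior bound above. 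With the uniform $C^0$-bound in hand, one rewrites $\dbar\eta=-\nu_C\aleph(\eta)-\mu_C\FF(\eta)$ and bootstraps (interior elliptic estimates plus weighted estimates on the end) to uniform $C^\infty_{loc}$-bounds and uniform exponential $L^2_1$-decay at the end. A sequence in $\zZ_0$ then subconverges in $C^\infty_{loc}$; the uniform decay forbids loss of mass at the end, so the convergence upgrades to $L^2_1$, and the limit again solves \eqref{eqn:csurface2}. Hence $\zZ_0$ is compact.

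Finally, replacing $\mu_C$ by $t\mu_C$ for $t\in[0,1]$ only rescales the coefficient of $\FF$ and leaves $\nu_C$ — hence the asymptotic model on the end and the exponential-decay rate $\delta$ — unchanged, while $|t\mu_C|\le|\mu_C|$; every estimate above is therefore uniform in $t$, so each $\zZ_0^t$ is compact (in fact the family $\{\zZ_0^t\}_{t\in[0,1]}$ is uniformly bounded in the relevant weighted $C^\infty$-norms). The main obstacle in all of this is the end of the plane: one must propagate Taubes' closed-manifold (torus) a priori estimate across the puncture and upgrade it to a genuinely uniform exponential-decay statement, which is exactly where the multiplicity-one elliptic asymptotics and the vanishing of $\mu$ in the $L$-flat model — i.e. Theorem~\ref{thm:corbitL} — are used.
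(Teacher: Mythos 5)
The central step of your argument — a uniform a priori $C^0$-bound on elements of $\zZ_0$ via the maximum principle applied to $|\eta_d|^2$, ``working downward through the triangular system, using the sign of $\nu_C$ to absorb the $\FF$-terms'' — is not available, and this is precisely the difficulty that the proposition is about. For $d>1$ the equation \eqref{eqn:csurface2} is genuinely nonlinear in $\eta$ (the map $\FF$ is a polynomial in the components of $\eta$ with no sign structure), the section $\nu_C$ of a special torus or plane has no definite sign, and the fiber $\C_d\approx\CC^d$ is noncompact with unbounded Hamiltonian gradient $\nabla^{1,0}\h_{\nu_C\mu_C}$. No direct a priori estimate on $\zZ_0$ is known, and in fact compactness of $\zZ_0$ can fail: Taubes' \cite{Taubes:Gr=SW}*{Proposition~2.7}, which the paper invokes, asserts that $\zZ_0$ is compact \emph{if and only if} the Cauchy--Riemann type operators $D_{\pi\circ\varphi}$ associated to the $d$-fold (possibly branched) holomorphic covers of $C$ inside $N_C$ have trivial kernel and cokernel. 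Your proof never uses, or needs, the hypothesis ``for suitably generic $J$'' — which should be a red flag, since the statement is simply false without it.

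The paper's actual argument is a blow-up/rescaling argument, not an estimate. One assumes a sequence $\{y^j\}\subset\zZ_0^t$ with $R_j:=R(y^j)\to\infty$, forms the rescaled zero loci $\Sigma_j=\{\eta\in N_C : p(y^j)(R_j\cdot\eta)=0\}$ of the associated monic degree-$d$ polynomials, and extracts a pointwise limit which is the image of a somewhere-injective $J^t$-holomorphic map $\varphi:C'\to N_C$ not factoring through the zero section. Projecting gives a $d$-fold (possibly branched) holomorphic cover $\pi\circ\varphi:C'\to C$ and a nontrivial element of $\ker D^t_{\pi\circ\varphi}$, contradicting the super-rigidity of $C$ established in \cite{Gerig:taming}*{Lemma~3.20} — this is exactly what ``generic $J$'' buys, and it is the essential input you omit. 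The parts of your sketch concerning the asymptotics at the end of a special plane (exponential decay from nondegenerate elliptic asymptotics, bootstrapping) are sound as far as they go, and they do explain why the noncompactness of $C$ causes no extra trouble compared to the torus case; but they are a secondary issue relative to the missing blow-up argument. I would also flag that Theorem~\ref{thm:corbitL} is not what controls the end here — the paper's proof does not invoke it at this point — and that for the plane the paper observes the argument is actually \emph{shorter} than Taubes' torus case, because one need not show the limiting cover $\pi\circ\varphi$ is unbranched.
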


\begin{proof}
When $C$ is a torus, this is precisely \cite{Taubes:Gr=SW}*{Proposition 2.8} which in turn is the combination of \cite{Taubes:Gr=SW}*{Proposition 2.7} and the fact that $(C,d)$ is ``$d$-nondegenerate'' (see \cite{Taubes:counting}*{Lemma 5.4}). When $C$ is a plane, we may copy the proof of \cite{Taubes:Gr=SW}*{Proposition 2.7} in \cite{Taubes:Gr=SW}*{\S 3} for the analogous case that $C$ is a torus. Strictly speaking, \cite{Taubes:Gr=SW}*{Proposition 2.7} argues that $\zZ_0$ is compact if and only if certain Cauchy--Riemann type operators associated to multiple covers of $C$ have trivial kernel and cokernel (which is then guaranteed by $d$-nondegeneracy of $C$). As the relevant analysis is performed locally along the curve, the argument extends almost verbatim to the case that $C$ is a plane. The argument is actually shorter than that for tori, and what follows is a brief sketch.

Let $\pi:N_C\to C$ denote the normal bundle of $C$ with respect to $\overline X_0$, and let $s:N_C\to\pi^* N_C$ denote the tautological section. Define the maps
$$R:L^2_1(\bigoplus_{q=1}^d N_C^q)\to\RR,\indent y=(y_1,\ldots,y_d)\mapsto \sup_C\sup_q|y_q|^{1/q}$$
$$p:L^2_1(\bigoplus_{q=1}^d N_C^q)\to L^2_1(\pi^* N_C^d),\indent y=(y_1,\ldots,y_d)\mapsto s^d+\pi^* y_1\cdot s^{d-1}+\cdots+\pi^* y_d$$
and suppose there exists a sequence $\lbrace y^j\rbrace_{j\in\NN}\subset \zZ^t_0$ with $R_j:=R(y^j)$ increasing and unbounded. Then define the subsets
$$\Sigma_j:=\lbrace\eta\in N_C\;|\;p(y^j)(R_j\cdot\eta)=0\rbrace$$
The proof of \cite{Taubes:Gr=SW}*{Proposition 3.1} shows that a subsequence of $\lbrace\Sigma_j\rbrace_{j\in\NN}$ converges pointwise to the image of a somewhere-injective $J^t$-holomorphic map $\varphi:C'\to N_C$ which does not factor through the zero-section of $N_C$. Here, $J^t$ on $TN_C=\pi^* N\oplus \pi^* TC$ is the unique almost complex structure defined by the condition that $T^{1,0}N_C$ is locally spanned by $\pi^* dz$ and $(\nabla s+\pi^*\nu_C\cdot s+t\cdot\pi^*\mu_C\cdot\bar s)\cdot\pi^* d\bar z$, where $\nabla$ is the Levi-Civita connection induced by the metric on $\overline X_0$.

Now, the composition $\pi\circ\varphi:C'\to C$ is a $d$-fold (possibly branched\footnote{The proof of \cite{Taubes:Gr=SW}*{Proposition 2.7} (and in particular, the proof of \cite{Taubes:Gr=SW}*{Proposition 3.2}) for a torus $C$ is more involved because it is shown there that $\pi\circ\varphi:C'\to C$ is unbranched, i.e. the domain $C'$ is also a torus.}) holomorphic covering and $\varphi$ defines a nontrivial element in the kernel of the Cauchy--Riemann type operator $D^t_{\pi\circ\varphi}$ over $\varphi^*\pi^* N_C$ that is the pull-back of $D^t:=\dbar+\nu_C+t\mu_C$ over $N_C$ (see \cite{Wendl:automatic}). But this contradicts ``super-rigidity'' of $C$ \cite{Gerig:taming}*{Lemma 3.20}. Thus, $\zZ^t_0$ is compact.
\end{proof}

\begin{remark}
Since a special plane $C$ has index zero and satisfies super-rigidity, it may be possible to modify $J$ in a small neighborhood of its image so that $\mu_C$ becomes 0, following the methodology of Taubes' $L$-flat approximations in \cite{Taubes:ECH=SWF1}*{Appendix} (this was suggested by Taubes). The a priori issue is that there may be other nearby $J$-holomorphic curves in $\overline X_0$ which pop into existence at some point along the smooth path of modifications of $J$, altering the moduli space of curves. If this issue can be dealt with, as in the case of the $L$-flat approximations, then $\zZ_0$ is a single (regular) point and there is no need for the Kuranishi structures along special planes that appear in Section~\ref{Kuranishi structures}.
\end{remark}

\subsection{Kuranishi structures}
\label{Kuranishi structures}

\indent\indent
In this section we give a proof of the following.

\begin{prop}
\label{prop:Psi}
Let $\Theta,\mM,\M$ be as in Notation~\ref{notation}. Given the hypotheses of Theorem~\ref{thm:moduli}, there is a multi-valued injective map $\Psi_r:\mM\to\M$. It is described in Definition~\ref{def:multivalued} and is only multi-valued at those currents in $\mM$ which contain a special curve of multiplicity greater than one.
\end{prop}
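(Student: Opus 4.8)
The plan is to mimic Taubes' construction of the maps $\Psi_r$ from \cite{Taubes:ECH=SWF2,Taubes:SWGrBook} in the cobordism setting, working locally along each holomorphic curve. Given a current $\cC \in \mM$ with components $(C,d)$, the point-constraint and loop-constraint data $\bar z, \bar\eta$ intersect $\cC$ in the required number of points, and the decomposition $\SS_+ = E \oplus K^{-1}E$ gives a line bundle $E \to \overline X$ whose zero locus (with multiplicity) is $\cC$. First I would build the ``potential candidate'' $(A^\ast, \Psi^\ast)$ for a SW instanton: away from $\cC$ it is close to the product-structure pair $(A_0,(1,0))$, and near each component $(C,d)$ it is governed by a section $\c_{C,d} \in L^2_1(\C_{N_C,d})$ of the $d$-vortex bundle, exactly as in Section~\ref{Maps between ECH and HM}. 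When $d=1$ on every component, each $\c_{C,1}$ lies in $\zZ_0 \cong \ker(D_C)$, which is a single regular point for generic $J$ by \cite{Gerig:taming}*{Proposition 3.16}, and the gluing construction perturbs $(A^\ast,\Psi^\ast)$ to a unique honest SW instanton in $\M$; this reproduces Taubes' single-valued assignment.

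The new phenomenon is multiply covered special curves. When $(C,d)$ has $d>1$, Proposition~\ref{prop:Zcompact} tells us that $\zZ_0 \subset L^2_1(\bigoplus_{j=1}^d N_C^j)$ is a \emph{compact} (but possibly non-regular, possibly positive-dimensional) space, since the nonlinear term $\FF$ blocks the naive extension of the symmetric vortex over $\gamma$ to $C$. To each point of $\zZ_0$ one wants to associate a SW instanton near the candidate $(A^\ast,\Psi^\ast)$, but without regularity of $\Delta_\c$ (the operator \eqref{eqn:Delta}) there is no straightforward implicit-function theorem. The fix, following Taubes' treatment of special tori in \cite{Taubes:Gr=SW} (and his earlier \cite{Taubes:counting}), is to put a Kuranishi structure on $\zZ_0$: cover $\zZ_0$ by finitely many charts, on each chart express the honest solutions near $(A^\ast,\Psi^\ast)$ as the zero set of a finite-dimensional obstruction map $\mathrm{ob}: \ker(\Delta_\c) \to \coker(\Delta_\c)$, and glue these into a global Kuranishi structure compatible with the coherent orientations. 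The signed count of the zero locus of this obstruction section over $\zZ_0$ — equivalently, the Euler number of the obstruction bundle — is a well-defined integer, and by \cite{Taubes:counting}*{Lemma 5.4} combined with the $d$-nondegeneracy (super-rigidity, \cite{Gerig:taming}*{Lemma 3.20}) of $C$, this integer equals the weight $q(\cC)$. The multi-valued map $\Psi_r$ then sends $\cC$ to this finite (with multiplicity/sign) set of SW instantons; I would package this as Definition~\ref{def:multivalued} with the number of points counted equal to $q(\cC)$, as promised in Section~\ref{Proof of Theorem}.

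Injectivity is handled by the usual exponential-decay estimates: two distinct currents $\cC, \cC' \in \mM$ determine candidates $(A^\ast,\Psi^\ast)$, $(A'^\ast,\Psi'^\ast)$ that are uniformly far apart in the relevant norms (because, for $r$ large, $\alpha$ is exponentially close to a unit-norm section away from a thin tube around the curve, so the zero locus of a SW instanton in the image of $\Psi_r$ recovers $\cC$), hence their glued solutions cannot coincide; this is the cobordism analogue of the argument in \cite{Taubes:ECH=SWF4}*{\S3}. Here the planes are no obstacle to the a priori estimates because, as noted before Proposition~\ref{prop:Zcompact}, every section of the various bundles along a special plane is compactly supported or decays exponentially along the ends, so Taubes' torus analysis carries over verbatim after replacing the compact torus by the finite-energy plane.

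I expect the main obstacle to be the construction and orientation of the Kuranishi structure on $\zZ_0$ for special planes — in particular verifying that the obstruction bundle and its Euler number are genuinely well-defined (independent of the chart choices and of the auxiliary data used to set up the finite-dimensional reduction), and that the resulting count is compatible with the coherent orientations of Section~\ref{Homology orientations} so that it reproduces $q(\cC)$ on the nose rather than only modulo sign. For tori this is exactly \cite{Taubes:Gr=SW}*{\S2.c--\S3}, but one must check that the open-ness of the plane (finite energy, exponential decay, asymptotics to an embedded elliptic orbit of multiplicity one, as in the definition of \emph{special}) does not introduce any boundary contributions or spectral-flow corrections to the obstruction operator; the compactness statement $\zZ_0$ and its $t$-family version in Proposition~\ref{prop:Zcompact} is precisely what is needed to rule this out, and a deformation/cobordism argument in the parameter $t \in [0,1]$ (interpolating $\mu_C \rightsquigarrow t\mu_C$) shows the count is unchanged as one degenerates toward the $\mu_C=0$ model, where it manifestly equals $1$.
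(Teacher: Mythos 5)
Your proposal follows essentially the same path as the paper: build Taubes' candidate configurations, recognize that non-special components are rigid (so $\zZ_0$ is a single regular point there), appeal to Proposition~\ref{prop:Zcompact} to get compactness of $\zZ_0$ for special curves, package the special-curve contributions in a Kuranishi structure, and prove injectivity by the zero-locus/exponential-decay argument of \cite{Taubes:ECH=SWF4}. That is the right skeleton and the right citations.

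One point where your route differs in a way worth flagging: you propose to cover $\zZ_0$ by finitely many local charts $\mathrm{ob}:\ker(\Delta_\c)\to\coker(\Delta_\c)$ and then glue these into a global Kuranishi structure. This faces the standard difficulty that $\dim\coker(\Delta_\c)$ can jump as $\c$ ranges over $\zZ_0$, so a priori the local obstruction spaces do not fit together into a bundle and the Euler-number interpretation needs extra work. The paper avoids this entirely: compactness of $\zZ_0$ is used (Lemma~\ref{lem:5.1Gr}) to choose a \emph{single} $\c$-independent finite-dimensional space $\Lambda\subset L^2_1(\bigoplus_j N_C^j\otimes T^{0,1}C)$ whose $L^2$-projection surjects onto $\coker(\Delta_\c)$ for \emph{all} $\c\in\zZ_0$, producing one global chart $\kK_\Lambda$ with a genuine map $\psi_\Lambda:\kK_\Lambda\to\Lambda$. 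The map $\psi_{\cC,r}$ in \eqref{eqn:psi} is then a concrete, $r$-dependent perturbation of $\bigtimes_{k'}\psi_\Lambda^{(k')}$, and Definition~\ref{def:multivalued} takes $\Psi_r(\cC)=\Psi_{\cC,r}(\psi_{\cC,r}^{-1}(0)\cap\yY_{\bar z,\bar\eta})$. So what you label as ``the main obstacle'' (chart-compatibility and well-definedness of the Euler count) is exactly the thing the global $\Lambda$ construction is designed to sidestep --- if you insist on local charts you will in practice be forced to stabilize by such a $\Lambda$ anyway. A second, smaller comment: the claim that the count of $\Psi_r(\cC)$ equals $q(\cC)$ is not part of Proposition~\ref{prop:Psi}; it is Step 1 and Step 2 of the proof of Theorem~\ref{thm:moduli} (the weight comparison $r'(C,d)=r(C,d)$). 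The proposition only asserts existence, injectivity, and the criterion for multi-valuedness, and the latter follows from \cite{Gerig:taming}*{Proposition 3.16} plus the $d=1$ case reducing $\zZ_0$ to $\ker(D_C)$, which you do note.
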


\begin{proof}
We closely follow \cite{Taubes:ECH=SWF2}*{\S5--\S7} and the analogous \cite{Taubes:GrtoSW}*{\S4--\S6}, while noting that the point/loop constraints cause no changes to the arguments (see \cite{Taubes:ECH=SWF5}*{\S4.e}).

The first main step in \cite{Taubes:ECH=SWF2}*{\S5} is to construct specific configurations $(A^*,\Psi^*)\in\text{Conn}(E)\times\Gamma(\SS_+)$ for each $\cC=\lbrace(C_k,d_k)\rbrace_k\in\mM$ (and $r\in\RR$), such that away from $\bigcup_k C_k\subset\overline X_0$ the curvature of $A^*$ is flat and the $E$-component of $\Psi^*$ is covariantly constant. The construction involves patching together local configurations on open sets in $\overline X_0$. The difference between compact and noncompact $\overline X_0$ is that, in the noncompact case, there need not be an embedding of a fixed-radius tubular neighborhood of the punctured curves. This complication occurs because the curves may have multiple ends approaching multiple covers of the same Reeb orbit, or an end approaching a multiple cover of a Reeb orbit (see Appendix~\ref{Appendix}). But by \cite{Gerig:taming}*{Proposition 3.16} this complication does not occur for the special planes -- this is important to note because we may now treat special planes and special tori on equal footing.

Fix a $\cC=\lbrace(C_k,d_k)\rbrace_k\in\mM$ and write $\Theta=\lbrace(\Theta_i,m_i)\rbrace_i$. The cover of $\overline X_0$ is prescribed by the bullet points of \cite{Taubes:ECH=SWF2}*{Equation 5-3} but augmented by a simplified version \cite{Taubes:ECH=SWF2}*{\S5.a.1 Step 1} over the special curves, and described as follows (a schematic example is depicted in Figure~\ref{fig:cover}):
\begin{itemize}
  \setlength{\itemsep}{1pt}
  \setlength{\parskip}{0pt}
  \setlength{\parsep}{0pt}
  
  \item There is an open set $U_{\Theta_i}$ for each orbit $\Theta_i$ that is not approached by a special plane, such that $U_{\Theta_i}$ lives far out on the ends of $\overline X_0$ and contained in a fixed-radius tubular neighborhood of $\RR\times\Theta_i$ in that region. These sets are pairwise disjoint.
  \item There is an open set $U_{C_k}$ for each punctured component $C_k$ that is not a special plane, such that $U_{C_k}$ lives in a compact region of $\overline X_0$ and contained in a fixed-radius tubular neighborhood of $C_k$ in that region -- this set does not contain the entire $C_k$. These sets are pairwise disjoint and only intersect $U_{\Theta_i}$ when $\Theta_i$ is approached by $C_k$.
  \item There is an open set $U_{C_k}$ for each closed component and special plane $C_k$, given by a fixed-radius tubular neighborhood of $C_k$. These sets are pairwise disjoint and disjoint from all preceding open sets, which is possible because each special plane is asymptotic to an elliptic orbit with multiplicity one.
  \item There is an open set $U_0$ to cover the remainder of $\overline X_0$.
\end{itemize}

\begin{figure}
    \centering
    \includegraphics[width=5cm]{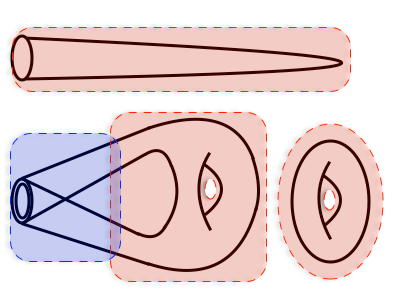}
    \caption{$C_k$ in black, $U_{C_k}$ in red, $U_{\Theta_i}$ in blue ($U_0$ not shown)}
    \label{fig:cover}
\end{figure}

\begin{notation}
Let $\Theta'$ denote the subset of $\Theta$ consisting of elliptic orbits with multiplicity one that are approached by a special plane, and denote the complement $\Theta''=\Theta-\Theta'$. Let $\cC'$ denote the subset of $\cC$ consisting of special curves, and denote the complement $\cC''=\cC-\cC'$. Use $k'$ to index the components of $\cC'$, and use $k''$ to index the (remaining) components of $\cC''$.
\end{notation}

As in \cite{Taubes:ECH=SWF2}*{\S5.a} and \cite{Taubes:GrtoSW}*{\S2.d}, the configuration $(A^*,\Psi^*)$ is defined by patching up the configurations respectively on $U_{C_k}$ for each $(C_k,d_k)\in\cC$, on $U_{\Theta_i}$ for each $(\Theta_i,m_i)\in\Theta$, and on $U_0$, defined in the following manner:
\begin{itemize}
  \setlength{\itemsep}{1pt}
  \setlength{\parskip}{0pt}
  \setlength{\parsep}{0pt}
   
  \item Over $U_0$, we trivialize $E|_{U_0}=U_0\times\CC$ and set $A^*$ to be the trivial (flat) connection and $\Psi^*=(1,0)\in\underline\CC\oplus K^{-1}|_{U_0}$.

  \item Over $U_C$ for a component $(C,1)\in\cC''$, the pair $(A^*,\Psi^*)$ is constructed from $\c=0\in L^2_1(\C_{N_C,1})$ via the equations \cite{Taubes:ECH=SWF2}*{Equation 5-2}.

  \item Over $U_{\Theta_i}$ for an orbit $(\Theta_i,m_i)\in\Theta''$, after identifying $U_{\Theta_i}$ with $(-\infty,-R)\times S^1\times\CC$ for some $R\gg1$ via \cite{Taubes:ECH=SWF2}*{Equation 4-1}\footnote{The $r$-independent lower bound on $R$ depends on $\cC$ \cite{Taubes:ECH=SWF2}*{\S4.c}.}, the pair $(A^*,\Psi^*)$ is defined by the equations \cite{Taubes:ECH=SWF2}*{Equation 5-8, Equation 5-9} and constructed from the aforementioned configurations on $U_0\cup \bigcup_{(C,1)\in\cC''}U_C$ (with cutoff functions) and also from $\bigcup_kC_k$ by viewing the latter as the graph of a map $(-\infty,-R)\times S^1\to\text{Sym}^{m_i}(\CC)\approx\C_{m_i}$ which sends $(s,t)$ to the set of $m_i$ intersections of $\bigcup_kC_k$ with $(\lbrace(s,t)\rbrace\times\CC)$ \cite{Taubes:ECH=SWF2}*{\S5.c}.

  \item  Over $U_C$ for a component $(C,d)\in\cC'$, after identifying $U_C$ with the unit disk subbundle of $\pi:N_C\to C$ using the exponential map, we set $E|_{U_C}=\pi^*N_C^d$. The pair $(A^*,\Psi^*)$ is constructed from $\c=0\in L^2_1(\C_{N_C,d})$ via the analog of equations \cite{Taubes:ECH=SWF2}*{Equation 5-2} in which the section $\s/|\s|$ (notation of \cite{Taubes:ECH=SWF2}*{\S5.a}) and connection $\theta_\s$ (defined by \cite{Taubes:ECH=SWF2}*{Equation 5-1}) over $\pi^*N_C$ are replaced by the induced $\s^d/|\s|^d$ and $d\theta_\s$ over $\pi^*N_C^d$.\footnote{This model also appears in \cite{Taubes:ECH=SWF2}*{\S5.a.2 Step 2} when $C$ is an $\RR$-invariant cylinder in a symplectization.}
\end{itemize}

An examination of Taubes' arguments for constructing SW instantons, in particular \cite{Taubes:ECH=SWF2}*{\S7.e.1} and \cite{Taubes:GrtoSW}*{\S3.b}, leads to the conclusion that the constructions in \cite{Taubes:ECH=SWF2}*{\S5} of $(A^*,\Psi^*)$ did not have to require the section $\bc=(\c_k)_k\in\bigtimes_kL^2_1(\C_{N_{C_k},d_k})$ to be the zero element nor to live in $\bigtimes_k\zZ^{(k)}_0$, where $\zZ^{(k)}_0$ denotes the $(C_k,d_k)$ version of~\eqref{def:csurface}. Instead, it can and will be assumed that $\bc$ belongs to the space
$$\yY:=\big(\bigtimes_{k'}\kK_\Lambda^{(k')}\big)\times\big(\bigtimes_{k''}\big\lbrace0\big\rbrace\big)$$
where $\kK_\Lambda^{(k')}$ is a suitably small open neighborhood of $\zZ^{(k')}_0$ in $L^2_1(\bigoplus^{d_{k'}}_{j=1}N_{C_{k'}}^j)$; this neighborhood will be specified in the upcoming Lemma~\ref{lem:5.1Gr} (similar to what is denoted by the same notation in \cite{Taubes:GrtoSW}). To clarify, recall from Remark~\ref{Zregular2} that all $\zZ^{(k)}_0$ are regular in the setting of \cite{Taubes:ECH=SWF2}. In our context, the versions of $\zZ_0$ associated to special curves are no longer regular, and the arguments in \cite{Taubes:ECH=SWF2} need to be augmented with the more general gluing arguments from \cite{Taubes:GrtoSW}*{Proposition 5.2}. The version of $(A^*,\Psi^*)$ constructed from $\bc\in\yY$ will be denoted henceforth by $(A_{\bc}^*,\Psi_{\bc}^*)$.

As in \cite{Taubes:ECH=SWF2}*{\S5.b, \S5.d}, we continue the search for a solution to the large $r$ version of Taubes' perturbed Seiberg--Witten equations~\eqref{SW4} by considering a family of deformations of $(A_{\bc}^*,\Psi_{\bc}^*)$, i.e. particular families\footnote{The corresponding notation in \cite{Taubes:ECH=SWF2} is such that $\big(A_0^\xi(\b),\Psi_0^\xi(\b)\big)$ equals \cite{Taubes:ECH=SWF2}*{Equation 5-19} with $\big(A_0^\xi(0),\Psi_0^\xi(0)\big)$ denoted by $(A^\xi,\Psi^\xi)$.} $\big(A_{\bc}^\xi(\b),\Psi_{\bc}^\xi(\b)\big)$ of configurations parametrized by
\begin{align*}
\bc\in &\;\yY\\
\xi\in &\;\kK_{\bc}\cap\kK_{\bc*}\subset\kK_{\bc}:=\big(\bigtimes_{k'}\kK_{\c_{k'}}\big)\times\kK''\\
\b\in &\;\HH(iT^*\overline X_0\oplus\SS_+)
\end{align*}
Here, each $\kK_{\c_{k'}}$ is the space mentioned at the end of Section~\ref{Vortices and orbits and curves}, while $\kK''$ is a Banach space described in the same manner as that in \cite{Taubes:ECH=SWF2}*{\S5.b Step 4} by completing a product space (using the norm defined by \cite{Taubes:ECH=SWF2}*{Equation 5-13}) whose factors are indexed by the elements $(C_{k''},d_{k''})\in\C''$ and $(\Theta_i,m_i)\in\Theta''$ and characterized by \cite{Taubes:ECH=SWF2}*{Equation 5-12}, and $\kK_{\bc*}$ is a Banach space similar to $\kK_{\bc}$ using slightly different norms (see \cite{Taubes:ECH=SWF2}*{\S5.b Step 4}). Finally, $\HH$ is the completion of the space of compactly supported sections of $iT^*\overline X_0\oplus\SS_+$ using the norm defined by \cite{Taubes:ECH=SWF2}*{Equation 6-2}. Such an element $\xi\in\kK_{\bc}\cap\kK_{\bc*}$ is used to deform the model configurations $(A_{\bc}^*,\Psi_{\bc}^*)$ on the respective open sets $U_{C_k}$ and $U_{\Theta_i}$ and patching them together: This deformation $\big(A_{\bc}^\xi(0),\Psi_{\bc}^\xi(0)\big)$ is constructed in exactly the same manner as \cite{Taubes:ECH=SWF2}*{Equation 5-15} except over the open sets $U_C$ for $(C,d)\in\cC'$ where it is constructed by the analog of \cite{Taubes:ECH=SWF2}*{Equation 5-11} in which $\pi^\ast N_C^d$ replaces the bundle $E$ there. Then an element $\b\in\HH$ defines a global perturbation $\big(A_{\bc}^\xi(\b),\Psi_{\bc}^\xi(\b)\big)$ of $\big(A_{\bc}^\xi(0),\Psi_{\bc}^\xi(0)\big)$ by \cite{Taubes:ECH=SWF2}*{Equation 5-19}.

Such a configuration $\big(A_{\bc}^\xi(\b),\Psi_{\bc}^\xi(\b)\big)$ solves the large $r$ version of Taubes' perturbed Seiberg--Witten equations when \cite{Taubes:ECH=SWF2}*{Equation 5-20} (equivalently, \cite{Taubes:ECH=SWF2}*{Equation 6-1}) is satisfied, written schematically as
\begin{equation}
\label{eqn:schematic}
\begin{split}
\D\b+r^{1/2}\b*\b-\fv&=0\\
\lim_{s\to-\infty}\b=\b_\Theta&
\end{split}
\end{equation}
where $\b_\Theta$ is a solution to \cite{Taubes:ECH=SWF2}*{Equation 3-5}) determined by $\c_\Theta$. Here, $\b\mapsto\b*\b$ denotes an $r$-independent quadratic fiber-preserving map from $iT^*\overline X_0\oplus\SS_+$ to $i\wedge^2_+T^*\overline X_0\oplus\SS_-$, $\D$ is the deformation operator associated with $\big(A_{\bc}^\xi(0),\Psi_{\bc}^\xi(0)\big)$, and $\fv$ is the remainder (it is an error term determined by the failure of $\big(A_{\bc}^\xi(0),\Psi_{\bc}^\xi(0)\big)$ to solve the Seiberg--Witten equations). The idea now is Lyapunov--Schmidt reduction, i.e. to first project~\eqref{eqn:schematic} onto a certain subspace, solve for $\b$ in terms of $\bc$ and $\xi$ (and $r$), and then use the remaining part of~\eqref{eqn:schematic} to solve for $\xi$ in terms of $\bc$.

For each $\bc\in\yY$ and $\xi\in\kK_{\bc}\cap\kK_{\bc*}$, \cite{Taubes:ECH=SWF2}*{\S6.a} introduces a map $\t_\xi:\kK_{\bc}^2\to L^2(iT^*\overline X_0\oplus\SS_+)$ (see \cite{Taubes:ECH=SWF2}*{Equation 6-9}), where $\kK_{\bc}^2$ is the version of $\kK_{\bc}$ using the $L^2$-norm. Let $\Pi_\xi$ denote the $L^2$ orthogonal projection onto the image of $\t_\xi$.  The result of \cite{Taubes:ECH=SWF2}*{\S6} is a solution to 
\begin{equation}
\label{eqn:schematicProj1}
(1-\Pi_\xi)\left(\D\b+r^{1/2}\b*\b\right)=(1-\Pi_\xi)\fv
\end{equation}
Specifically, \cite{Taubes:ECH=SWF2}*{Proposition 6.4} solves for $\b$ as a smooth function of $\lbrace\bc,\xi,r\rbrace$ (using the contraction mapping principle indirectly), given appropriate bounds on $\xi$ and further assuming bounds on a certain term
$$(1-\Pi_\xi)(\fv-\fv_\h)$$
that appears when~\eqref{eqn:schematicProj1} is rewritten as in \cite{Taubes:ECH=SWF2}*{\S6.c} ($\fv_\h$ is given by \cite{Taubes:ECH=SWF2}*{Equation 6-21}). It is shown in \cite{Taubes:ECH=SWF2}*{Lemma 6.3} that this assumed bound on $(1-\Pi_\xi)(\fv-\fv_\h)$ is guaranteed when $\c\in\bigtimes_k\zZ^{(k)}_0$, but in general we have to check whether this bound is actually satisfied.

It then remains to solve
\begin{equation}
\label{eqn:schematicProj2}
\Pi_\xi\left(\D\b+r^{1/2}\b*\b-\fv\right)=0
\end{equation}
and also to find those $\bc\in\yY$ for which the term $(1-\Pi_\xi)(\fv-\fv_\h)$ is suitably bounded.
When $\bc\notin\bigtimes_k\zZ^{(k)}_0$, the constructions in \cite{Taubes:ECH=SWF2}*{\S7} need to be augmented by the constructions in \cite{Taubes:GrtoSW}. Indeed, it is also shown in \cite{Taubes:ECH=SWF2}*{\S7.d} that the related term
$$\Pi_\xi(\fv-\fv_\h)$$
is large when $\c_k$ is far away from $\zZ^{(k)}_0$, so that~\eqref{eqn:schematicProj2} need not be solvable. What follows are the modifications to \cite{Taubes:ECH=SWF2}*{\S7} that come directly from \cite{Taubes:GrtoSW}*{\S5}. Again, it is important to note the special curves $C_{k'}$ are disjoint from the other components of $\cC$ and only approach embedded Reeb orbits which are furthermore not approached by the other components of $\cC$, so the analysis in \cite{Taubes:GrtoSW} complements (and does not interfere with) the analysis in \cite{Taubes:ECH=SWF2}.\footnote{Since the arguments in \cite{Taubes:GrtoSW} and \cite{Taubes:ECH=SWF2} follow the same principle of obstructed gluing theory, here are some similarities/differences between the notions and equations in \cite{Taubes:GrtoSW}*{\S2--\S5} and our setup (which follows \cite{Taubes:ECH=SWF2}*{\S5--\S7}). Ignoring contributions from the ends of the curves, our variables $\lbrace\bc,\xi,\b,\fv-\fv_\h\rbrace$ may be viewed as analogs of $\lbrace y,x,q',\operatorname{err}\rbrace$ in \cite{Taubes:GrtoSW}, though the pairs $(\bc,\xi)$ and $(y,x)$ play different roles in the respective gluing arguments. The respective points of expansion of a tentative Seiberg--Witten solution are $(A_{\bc}^\xi(0),\Psi_{\bc}^\xi(0))$ in our setup and the version of $(\underline a_r,(\underline\alpha_r,\underline\beta_r))$ in \cite{Taubes:GrtoSW}*{\S3.b} constructed from $y$ (and from $x$ as in \cite{Taubes:GrtoSW}*{Equation 5.10}). This configuration $(\underline a_r,(\underline\alpha_r,\underline\beta_r))$ is a modification to the configuration $(a_r,(\alpha_r,0))$ in \cite{Taubes:GrtoSW}*{\S2.d} (whose analog is our $(A_{\bc}^*,\Psi_{\bc}^*)$) using the ``correction'' term $u_r$ defined by \cite{Taubes:GrtoSW}*{Equation 3.13} (whose analog is part of our $\b$). The Seiberg--Witten equations are then given by Equation~\eqref{eqn:schematic} with linear variable $\b$ and by \cite{Taubes:GrtoSW}*{Equation 4.2} with linear variable $q'$. The argument of \cite{Taubes:ECH=SWF2}*{\S 7.d} concerning $\Pi_\xi(\fv-\fv_\h)$ is analogous to that of \cite{Taubes:GrtoSW}*{\S 5.f} concerning $\Pi\operatorname{err}$.

While we're at it, here is a parallel between the above constructions and the analogous constructions for Reeb orbits, i.e. the construction of $\Theta\mapsto\c_\Theta$ (Theorem~\ref{thm:generators}). Our equations~\eqref{eqn:schematic},\eqref{eqn:schematicProj1},\eqref{eqn:schematicProj2} are the analogs of \cite{Taubes:ECH=SWF2}*{Equation 3-6, Equation 3-16, Equation 3-35}. Solving these equations amounts to a proof of Theorem~\ref{thm:generators} for the following reason: A key lemma \cite{Taubes:ECH=SWF2}*{Lemma 3.8} guarantees a unique solution to \cite{Taubes:ECH=SWF2}*{Equation 3-35} whenever $\c\in\zZ_0$ with $\coker\Delta_\c=0$, while Theorem~\ref{thm:corbitL} guarantees that $\zZ_0=\lbrace0\rbrace$ with $\coker\Delta_0=0$; so there is a unique monopole $\c_\Theta$.}

As in \cite{Taubes:ECH=SWF2}*{\S7}, we view the left hand side of~\eqref{eqn:schematicProj2} as a bundle map over $\yY$ given fiberwise by an operator
$$\kK_{\bc}\to\lL_{\bc},\indent\xi\mapsto r^{-1/2}\tT(\bc,\xi)$$
Here, the Banach space $\lL_{\bc}$ is defined similarly to that in \cite{Taubes:ECH=SWF2}*{\S6.a.7} -- it is the analog of our $\kK_{\bc}$ in which all relevant bundles involved in its definition are tensored with the components $T^{0,1}C_k$. We then consider the Taylor expansion
$$\tT(\bc,\xi)=\tT_0(\bc)+\tT_1(\bc)\cdot\xi+\tT_2(\bc,\xi)$$
where $\tT_0(\bc):=\tT(\bc,0)$ and $\tT_1(\bc)$ is linear and $\tT_2(\bc,\cdot)$ is the remainder. In the absence of special curves, the zeros of $\tT(\bc,\xi)$ are described by \cite{Taubes:ECH=SWF2}*{Proposition 7.1}. The existence of zeros hinges on appropriate bounds on $\lbrace \tT_j\rbrace$ and requires $\tT_1(\bc)$ to satisfy additional properties; they are granted as follows:
\begin{itemize}
  \setlength{\itemsep}{1pt}
  \setlength{\parskip}{0pt}
  \setlength{\parsep}{0pt}
  
  \item[(i)] the bounds on $\tT_2$ are established in \cite{Taubes:ECH=SWF2}*{\S 7.e.3} and granted by the small norms on $\kK_{\bc}$, 

  \item[(ii)] the bounds on $\tT_0(\bc)$ are established in \cite{Taubes:ECH=SWF2}*{\S7.e.1} and granted by the constraint $\bc\in\bigtimes_k\zZ^{(k)}_0$,

  \item[(iii)] the properties of $\tT_1(\bc)$ are ultimately granted by the constraint $\bigoplus_k\coker\Delta_{\c_k}=0$, which in turn is granted by the fact that $\bigoplus_k\coker D_{C_k}=0$ for generic $J$ (see Section~\ref{Vortices and orbits and curves}).
\end{itemize}
However, for a special curve $C_{k'}$ it can be the case that $\coker\Delta_{\c_{k'}}\ne0$, so item (iii) no longer holds and the proof of \cite{Taubes:ECH=SWF2}*{Proposition 7.1} needs to be modified. The resolution is to relax the constraint $\c_{k'}\in\zZ^{(k')}_0$ (so item (ii) also needs to be modified) and appeal to Kuranishi structures as in the proofs of \cite{Taubes:GrtoSW}*{Lemma 5.1, Proposition 5.2}. We do this now.

Using the fact that $\zZ^{(k')}_0$ is compact (Proposition~\ref{prop:Zcompact}) we have the following analog of \cite{Taubes:GrtoSW}*{Lemma 5.1} whose proof is contained in \cite{Taubes:GrtoSW}*{\S5.a, \S5.g.2}.\footnote{The submanifold $\kK_\Lambda$ in \cite{Taubes:GrtoSW}*{Lemma 5.1} has dimension $d+\dim(\Lambda)$, where $d=\ind_\RR\Delta$. In our case, $\ind_\RR\Delta=\ind_\RR D_C=0$. For the reader's benefit, we point out that the integer $d$ is misstated in \cite{Taubes:GrtoSW}*{Lemma 5.1} though correctly stated as $d=2m(1-g)+m(m+1)n=2m(n+1-g)+m(m-1)n$ in \cite{Taubes:GrtoSW}*{Proposition 3.2}.}

\begin{lemma}
\label{lem:5.1Gr}
Let $(C,d)$ be a special curve with multiplicity. There exists a finite dimensional vector subspace $\Lambda\subset L^2_1(\bigoplus_{j=1}^d N^j_C\otimes T^{0,1}C)$ such that for all $\c\in\zZ_0$, the projection of $\Lambda$ onto $\coker\Delta_\c$ is surjective. Denote by $Q_\Lambda:L^2_1(\bigoplus_{j=1}^d N^j_C\otimes T^{0,1}C)\to\Lambda$ the $L^2$ orthogonal projection. There also exists a smooth $\dim(\Lambda)$-dimensional submanifold $\kK_\Lambda\subset L^2_1(\bigoplus_{j=1}^d N^j_C)$ with compact closure, satisfying the following properties:

1) If $\c\in\kK_\Lambda$ then
$$(1-Q_\Lambda)\left(\dbar\c+\nu_C\aleph(\c)+\mu_C\FF(\c)\right)=0$$

2) $\zZ_0$ embeds in $\kK_\Lambda$ as the zero set of the map
$$\psi_\Lambda:\kK_\Lambda\to\Lambda,\indent \c\mapsto Q_\Lambda\left(\dbar\c+\nu_C\aleph(\c)+\mu_C\FF(\c)\right)$$

3) For a suitably chosen $r$-independent constant $\e>0$, for each $\c\in\kK_\Lambda$ there is some element in $\zZ_0$ that is within $\e$ distance (with respect to the $L^2$ norm) to $\c$, and $(1-Q_\Lambda)\Delta_\c$ is surjective.
\end{lemma}

We now show how this lemma is used to modify the arguments in \cite{Taubes:ECH=SWF2}*{\S7}, along the lines of the analogous arguments that prove \cite{Taubes:GrtoSW}*{Proposition 5.2}. Still following \cite{Taubes:ECH=SWF2}*{\S7.b.5}, we write the components of $\tT$ as $((\tT_{C_k})_{C_k\in\cC},(\tT_{\Theta_j})_{\Theta_j\in\Theta''})$. In terms of the Taylor expansion, each component $\tT_{C_{k}}(\c_{k},\xi_{k})$ equals the sum of the $0^\text{th}$ order term
$$\tT_{0C_{k}}(\c_{k},\xi_{k})=\dbar\c_{k}+\nu_{C_{k}}\aleph(\c_{k})+\mu_{C_{k}}\FF(\c_{k})+\text{err}$$
and the linear term
$$\tT_{1C_{k}}(\c_{k},\xi_{k})=\Delta_{\c_{k}}\xi_{k}+\text{err}$$
and the remainder $\tT_{2C_{k}}(\c_{k},\xi_{k})$.\footnote{The ``err'' term in $\tT_{0C_{k}}$ is the sum of two parts: $r^{1/2}\Pi_\xi(\D\b+r^{1/2}\b*\b-\fv)-r^{1/2}\Pi_\xi(\fv-\fv_\h)$ evaluated at $\xi=0$ (which is bounded similarly to that done in \cite{Taubes:ECH=SWF2}*{\S7.b.2, \S7.b.3}), and $r^{1/2}\Pi_\xi(\fv-\fv_\h)-(\dbar\c_{k}+\nu_{C_{k}}\aleph(\c_{k})+\mu_{C_{k}}\FF(\c_{k}))$ evaluated at $\xi=0$ (which is bounded similarly to that done in \cite{Taubes:ECH=SWF2}*{\S7.d.1, \S7.d.2}). The ``err'' term in $\tT_{1C_{k}}$ is the difference between $\Delta_{\c_{k}}\xi_{k}$ and a directional derivative of $r^{1/2}\Pi_\xi(\fv-\fv_\h)$ at $\xi=0$ (similar to \cite{Taubes:ECH=SWF2}*{\S7.e.2}).} For special curves, the linear term $\tT_{1C_{k'}}$ may have nonzero (co)kernel. To deal with this, we first apply the operator $1-Q_\Lambda^{(k')}$ to the $\tT_{C_{k'}}$ components, and we denote the resulting operator by
$$\tT_Q:=\left(\left((1-Q_\Lambda^{(k')})\tT_{C_{k'}}\right)_{C_{k'}\in\cC'},(\tT_{C_{k''}})_{C_{k''}\in\cC''},(\tT_{\Theta_j})_{\Theta_j\in\Theta''}\right)$$
We then restrict the fiberwise domain of $\tT_Q$ to $(\bigtimes_{k'}\lL_{\c_{k'}}^\perp)\times\kK''$, where $\lL_{\c_{k'}}^\perp$ denotes the $L^2$ orthogonal complement of
$$\lL_{\c_{k'}}:=\ker(1-Q^{(k')}_\Lambda)\Delta_{\c_{k'}}\subset L^2_1(\bigoplus_{j=1}^{d_{k'}}N_{C_{k'}}^j)$$
We now rerun \cite{Taubes:ECH=SWF2}*{\S7}, with $\tT_Q$ replacing $\tT$, to solve the projected equation $\tT_Q(\bc,\xi)=0$. In this regard, the aforementioned item (i) is unchanged, item (ii) is now granted by the relaxed constraint $\bc\in\yY$ thanks to Lemma~\ref{lem:5.1Gr}(1), and item (iii) is now satisfied thanks to Lemma~\ref{lem:5.1Gr}(3).

The corresponding result of \cite{Taubes:ECH=SWF2}*{Proposition 7.1, Proposition 7.6}\footnote{The vector space $V_0:=\bigoplus\ker(D_C)$ is constructed more generally in \cite{Taubes:ECH=SWF2}*{Equation 7-43}, but thanks to the asymptotics of our currents $\cC$ and contact form $\lambda_\s$, there are no contributions from what is denoted $V_{\gamma\pm,k}$ there (its dimension \cite{Taubes:ECH=SWF3}*{Equation 3-44} is zero).} is an $L^\infty$-small ball
$$B\subset \bigoplus_{C\in\cC''}\ker(D_C)$$
and map $q_{\bc}:\kK_{\bc}\to \bigoplus_{C\in\cC''}\ker(D_C)$ such that the zeros of $\tT_Q(\bc,\xi)$, as a function of $(\bc,b)\in\yY\times B$, are
$$\xi(\bc,b)\in(\bigtimes_{k'}\lL_{\c_{k'}}^\perp)\times\kK''$$
given by the restriction of $q_{\bc}^{-1}(b)$ to an $L^\infty$-small ball (this ball corresponds to that specified in \cite{Taubes:ECH=SWF2}*{Proposition 6.4}). This association $(\bc,b)\mapsto\xi(\bc,b)$ induces the smooth map
$$\Psi_{\cC,r}:\yY\times B\to\bB(\c_\Theta,\varnothing;\s_\s),\indent (\bc,b)\mapsto \Big(A_{\bc}^{\xi(\bc,b)}\big(\b\big(\bc,\xi(\bc,b)\big)\big), \Psi_{\bc}^{\xi(\bc,b)}\big(\b\big(\bc,\xi(\bc,b)\big)\big)\Big)$$
and we note that the pre-image 
$$\yY_{\bar z,\bar\eta}:=\Psi_{\cC,r}^{-1}\big(\bB(\c_\Theta,\varnothing;\s_\s,\bar z,\bar\eta)\big)\subset\yY\times B$$
is diffeomorphic to $\bigtimes_{k'}\kK_\Lambda^{(k')}$ because no special curve $C_{k'}$ intersects the points $\bar z$ and loops $\bar\eta$ (see also \cite{Taubes:ECH=SWF5}*{Proposition 4.2} and \cite{Taubes:Gr=SW}*{Proposition 2.10}). It remains to find those $\bc\in\yY_{\bar z,\bar\eta}$ that will satisfy $\Psi_{\cC,r}(\bc)\in\M$. 

The evaluation of the projection $Q_\Lambda^{(k')}\tT_{C_{k'}}$ defines the smooth map\footnote{This map appears in Taubes' papers as \cite{Taubes:GrtoSW}*{Equation 5.21}.}
\begin{equation}
\label{eqn:psi}
\psi_{\cC,r}:\yY\times B\to\bigoplus_{k'}\Lambda_{k'},\indent (\bc,b)\mapsto \bigoplus_{k'}\left[\psi_\Lambda^{(k')}(\c_{k'})+Q_\Lambda^{(k')}\left(\Delta_{\c_{k'}}\xi(\c,b)_{k'}+\rR_{C_{k'}}\left(\c_{k'},\xi(\c,b)_{k'}\right)\right)\right]
\end{equation}
where $\rR_{C_{k'}}$ denotes the sum of $\tT_{2C_{k'}}$ and the ``err'' terms, and we note that by construction,
$$\psi_{\cC,r}^{-1}(0)\cap\yY_{\bar z,\bar\eta}=\Psi_{\cC,r}^{-1}(\M)$$
As explained in \cite{Taubes:GrtoSW}*{\S6} and \cite{Taubes:ECH=SWF5}*{\S4} (see also \cite{Taubes:Gr=SW}*{Property 2.22}), $\Psi_{\cC,r}$ is a smooth embedding for sufficiently large $r$, it maps $\psi_{\cC,r}^{-1}(0)\cap\yY_{\bar z,\bar\eta}$ homeomorphically onto a subset of $\M$, and the images of $\lbrace\Psi_{\cC,r}\rbrace_{\cC\in\mM}$ in $\bB(\c_\Theta,\varnothing;\s_\s)$ are disjoint. (In this regard, each image $\im\Psi_{\cC,r}$ constitutes a ``Kuranishi model'' for a subset of $\M$.) We can now state the anticipated correspondence between $\mM$ and $\M$, completing the proof of Proposition~\ref{prop:Psi}.

\begin{definition}
\label{def:multivalued}
The multi-valued injective map $\Psi_r:\mM\to\M$ is given by the composition
$$\cC\mapsto\psi_{\cC,r}^{-1}(0)\cap\yY_{\bar z,\bar\eta}\mapsto\Psi_{\cC,r}\left(\psi_{\cC,r}^{-1}(0)\cap\yY_{\bar z,\bar\eta}\right)$$
where the first map is multi-valued and the second map is injective.
\end{definition}
\end{proof}

\subsection{Surjectivity of $\Psi_r$}
\label{Surjectivity}

\indent\indent
In this section we give a proof of the following.

\begin{prop}
\label{prop:PsiSurjective}
Given the hypotheses of Theorem~\ref{thm:moduli}, Proposition~\ref{prop:Psi}'s map $\Psi_r$ is surjective.
\end{prop}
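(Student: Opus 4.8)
The plan is to mimic Taubes' proof of surjectivity of $\Psi_r$ in \cite{Taubes:ECH=SWF4}*{\S3--7} (and the analogous arguments in \cite{Taubes:SWtoGr}*{\S5--7}) adapted to the completed symplectic cobordism $(\overline X,\omega,J)$, following the same three-part structure recalled at the end of Section~\ref{Maps between ECH and HM}. The upshot will be that, given a SW instanton $\d\in\M$, one extracts from the $E$-component $\alpha$ of its spinor a $J$-holomorphic current $\cC\in\mM$ whose ``Kuranishi model'' $\im\Psi_{\cC,r}$ contains $\d$, so $\d$ lies in the multi-valued image of $\Psi_r$.

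First I would establish the relevant \textit{a priori} estimates for solutions of the large-$r$ version of \eqref{SW4} over $\overline X$: pointwise bounds on $(\alpha,\beta)$ and on the curvature $F_A$, an area/energy bound forcing $\alpha^{-1}(0)$ to concentrate near a finite-area $J$-holomorphic subvariety, and the decay estimates on the cylindrical ends that identify the asymptotics with the monopole $\c_\Theta$ (hence with the orbit set $\Theta$). This is \cite{Taubes:ECH=SWF4}*{\S3, Lemmas 5.2--5.3}; on the ends of $\overline X$ the estimates are exactly Taubes' symplectization estimates from \cite{Taubes:ECH=SWF4}, and in the compact part they are the closed-manifold estimates from \cite{Taubes:SWtoGr}, so the only new bookkeeping is the interpolation across the neck region where $\widehat\omega$ transitions — but the action bound $\aA(\Theta)<\rho(\tau_\omega(\s))$ from the hypothesis keeps everything uniformly controlled. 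Second, I would use these global results to assign to $\d$ a current $\cC\in\mM$: the zero set $\alpha^{-1}(0)$, counted with multiplicity and taken in a suitable limit $r\to\infty$, converges to a $J$-holomorphic current of ECH index $I$ representing $\tau_\omega(\s)$ and asymptotic to $\Theta$, passing through $\bar z$ and meeting $\bar\eta$ (the point/loop constraints persist under the limit, cf. \cite{Taubes:ECH=SWF5}*{\S4.e}); this is the content of \cite{Taubes:ECH=SWF4}*{\S4} together with \cite{Taubes:SWtoGr}*{\S5--7}. Here one uses \cite{Gerig:taming}*{Proposition 3.16} to know that the only multiply covered components that can arise are special curves, so the limiting current is exactly the kind of object parametrized by the $\Psi_{\cC,r}$ of Section~\ref{Kuranishi structures}. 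Third, I would show the assignment $\d\mapsto\cC$ is inverse to $\Psi_r$ up to the multi-valuedness, i.e.\ that for $r$ large every $\d\in\M$ whose associated current is $\cC$ actually lies in $\im\Psi_{\cC,r}$ and in fact in $\Psi_{\cC,r}(\psi_{\cC,r}^{-1}(0)\cap\yY_{\bar z,\bar\eta})$; this is \cite{Taubes:ECH=SWF4}*{\S6--7}, and it follows because $\Psi_{\cC,r}$ is a local diffeomorphism onto a neighborhood of its image (established in Section~\ref{Kuranishi structures}) together with a uniqueness statement for solutions near the candidate $(A^\ast,\Psi^\ast)$.

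The main obstacle I expect is handling the \textbf{non-$\RR$-invariant multiply covered special planes} in the surjectivity/compactness argument: in Taubes' symplectization setting the only multiply covered curves were $\RR$-invariant cylinders with $\mu_C=0$, so no Kuranishi structure was needed there, whereas the special planes here have $\mu_C\ne0$ and genuinely noncompact domains. The point-extraction in part two must be compatible with the Kuranishi models $\im\Psi_{\cC,r}$ built in Section~\ref{Kuranishi structures} along these planes, and one must verify that a SW instanton cannot ``escape'' the union $\bigsqcup_{\cC\in\mM}\im\Psi_{\cC,r}$ — equivalently that every instanton's $\alpha^{-1}(0)$ limits to an honest element of $\mM$ with no lost area down the ends. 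The resolution is that the special planes approach \textit{embedded} elliptic orbits of multiplicity one which are not approached by any other component of $\cC$ (by \cite{Gerig:taming}*{Proposition 3.16}), so the analysis near them is genuinely local and decoupled — and the compactness of $\zZ_0^{(k')}$ from Proposition~\ref{prop:Zcompact}, via super-rigidity \cite{Gerig:taming}*{Lemma 3.20}, prevents any bubbling in the normal directions. Thus the planes can be treated on the same footing as Taubes' special tori in \cite{Taubes:SWtoGr}, and the surjectivity argument goes through, with the count of $\d\in\M$ mapping to a fixed $\cC$ being exactly the (signed) number of points of $\psi_{\cC,r}^{-1}(0)\cap\yY_{\bar z,\bar\eta}$ — which is what Section~\ref{Proof of Theorem} will match against $q(\cC)$.
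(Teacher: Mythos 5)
Your proposal is correct and follows essentially the same route as the paper's proof: a priori estimates from \cite{Taubes:ECH=SWF4} and \cite{Taubes:SWtoGr} adapted via \cite{HutchingsTaubes:Arnold2} and \cite{Hutchings:fieldtheory} to the (strong, non-exact) cobordism $(X_0,\omega)$, extraction of a $J$-holomorphic current from $\alpha^{-1}(0)$, and inversion via the Kuranishi models, with the special planes handled on the same footing as Taubes' special tori thanks to Proposition~\ref{prop:Zcompact} and the disjointness guaranteed by \cite{Gerig:taming}*{Proposition 3.16}. The paper's write-up isolates the curve-extraction step as a stand-alone key lemma (Lemma~\ref{lem:SWtoGr}), where one must also rule out broken limiting curves via \cite{Gerig:taming}*{Proposition 3.13} and pin down the ECH index via \cite{Gardiner:gradings}*{Theorem 5.1}; these are details you would want to make explicit, but they do not alter the structure of your argument.
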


Its proof uses the upcoming Lemma~\ref{lem:estimates} numerous times along with other estimates established in \cite{Taubes:SWtoGr} on closed symplectic manifolds and in \cite{Taubes:ECH=SWF4} on symplectizations. It is explained in \cite{HutchingsTaubes:Arnold2}*{\S7} how the estimates from \cite{Taubes:ECH=SWF4} on a symplectization carry over, with minor modifications, to \textit{exact} symplectic cobordisms. For everything to carry over to \textit{strong} (non-exact) symplectic cobordisms, such as $(X_0,\omega)$, we sometimes must make an additional modification. We will elaborate on these modifications when necessary.

\begin{lemma}
\label{lem:estimates}
There exists $\kappa\ge1$ such that if $r\ge\kappa$ and if $(A,\Psi=(\alpha,\beta))$ solves Taubes' perturbed Seiberg--Witten equations~\eqref{SW4} on $\overline X_0$ then
$$|\alpha|\le1+\kappa r^{-1}$$
$$|\beta|^2\le\kappa r^{-1}(1-|\alpha|^2)+\kappa^2r^{-2}$$
\end{lemma}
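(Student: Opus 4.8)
The plan is to derive these a priori pointwise bounds from the Seiberg-Witten equations~\eqref{SW4} via the Weitzenb\"ock formula for the Dirac operator, exactly as in Taubes' work on closed symplectic 4-manifolds (see \cite{Taubes:SWtoGr}*{\S2} and \cite{HutchingsTaubes:Arnold2}*{\S7}). Since these are \emph{local} elliptic estimates and $\overline X$ has cylindrical ends which are symplectizations of the nondegenerate contact manifolds $(-\partial X_0,\lambda_\s)$, the only real content beyond citing Taubes is to check that the passage from exact to strong symplectic cobordisms does not disrupt the argument.

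First I would write out the Weitzenb\"ock identity $D_A^\ast D_A\Psi = \nabla_A^\ast\nabla_A\Psi + \tfrac{1}{4}s_g\Psi + \tfrac{1}{2}\cl(F_{A}^+ + F_{A_{K^{-1}}}^+)\Psi$, where $s_g$ is the scalar curvature of the chosen metric $g$ on $\overline X$. Substituting the curvature equation $F_A^+ = \tfrac{r}{2}(\rho(\Psi) - i\widehat\omega) - \tfrac12 F_{A_{K^{-1}}}^+ + i\mu_\ast$ and using $D_A\Psi=0$ gives a differential inequality for $|\Psi|^2 = |\alpha|^2 + |\beta|^2$ with a leading term $+\tfrac{r}{2}|\alpha|^2(|\alpha|^2-1)$ coming from the algebra of $\rho(\Psi)$ paired against $\widehat\omega$ and against $\rho(\Psi)$ itself (this is the standard computation: on the $E$-summand the quadratic $\rho$-term dominates, on the $K^{-1}E$-summand one gets favorable signs). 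The key algebraic fact is that $\langle \cl_+(\widehat\omega)\Psi,\Psi\rangle$ and $|\rho(\Psi)|^2$ combine so that wherever $|\alpha|^2 > 1$ the right-hand side of the inequality for $\Delta|\alpha|^2$ is strictly negative up to an $O(1)$ (i.e. $r$-independent) error, and wherever $|\alpha|^2 \le 1$ the quantity $|\beta|^2$ is controlled by $r^{-1}(1-|\alpha|^2)$ plus an $r^{-2}$ error. Then I would invoke the maximum principle: on the compact piece $X_0$ this is immediate, and on the cylindrical ends the curvature $F_{A_{K^{-1}}}^+$, the scalar curvature, and the perturbation $\mu_\ast$ are all uniformly bounded (the ends are fixed symplectizations with a fixed metric and a fixed exact perturbation agreeing with $\mu_\pm$), and any Seiberg-Witten solution on $\overline X$ with the relevant asymptotics has $|\Psi|$ bounded there by the corresponding 3-dimensional a priori bounds of \cite{Taubes:ECH=SWF1}*{\S2} applied slicewise; so the maximum of $|\alpha|^2 - 1$ (if positive) is attained on $X_0$, forcing it to be $O(r^{-1})$. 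This yields $|\alpha|\le 1 + \kappa r^{-1}$, and feeding that back into the inequality for $|\beta|^2$ gives the second estimate.

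The one point requiring care — and the main obstacle — is the non-exactness of $(X_0,\omega)$: in an exact symplectic cobordism one has a global primitive which makes certain integration-by-parts / energy-bound manipulations clean (as in \cite{HutchingsTaubes:Arnold2}*{\S7}), but in the strong case $\widehat\omega$ is only closed, not exact, so one cannot globally write $\widehat\omega = d\kappa$. The fix, which I would explain here, is that the pointwise Weitzenb\"ock argument above never actually needs a global primitive — it only uses the rescaled self-dual form $\widehat\omega = \sqrt2\,\tilde\omega/|\tilde\omega|_g$, which is a fixed smooth form on $\overline X$ of unit pointwise norm $\sqrt 2$ and with $\|d\widehat\omega\|_{C^0}$ and $\|\nabla\widehat\omega\|_{C^0}$ bounded (it equals $\sqrt2\,e^{\phi_\pm/|\ldots|}\lambda_\pm$-type expressions on the ends, manifestly smooth with bounded derivatives there). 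So every place in \cite{Taubes:SWtoGr}*{\S2} where a primitive of the symplectic form is used for a \emph{pointwise} estimate can be replaced by the corresponding bound on $\widehat\omega$ and its derivatives; exactness is only genuinely needed for \emph{global} energy inequalities, which are not what Lemma~\ref{lem:estimates} asserts. With that observation in place, the constant $\kappa$ can be taken to depend only on $\|s_g\|_{C^0}$, $\|F_{A_{K^{-1}}}^+\|_{C^0}$, $\|\mu_\ast\|_{C^0}$, $\|\widehat\omega\|_{C^1}$, and the geometry of the fixed ends — all of which are fixed once $(X,\omega,J,\s)$ is fixed — and the proof is complete.
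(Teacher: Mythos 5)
Your proposal follows the paper's proof essentially verbatim: the Bochner--Weitzenb\"ock identity for $D_\bA^\ast D_\bA\Psi=0$, substitution of the curvature equation, projecting the resulting identity onto the $E$ and $K^{-1}E$ summands, and a maximum-principle argument that is closed off on the cylindrical ends by the three-dimensional a priori bounds from Taubes' ECH=SWF work. The only stylistic difference is that you treat the exact-versus-strong cobordism distinction as a serious obstacle requiring a paragraph of justification, whereas the paper dismisses it in a single clause (``there are no changes in general''), precisely for the reason you give --- the estimate is pointwise and never invokes a global primitive of $\widehat\omega$.
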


\begin{proof}
This was already stated in \cite{HutchingsTaubes:Arnold2}*{Lemma 7.3} for exact symplectic cobordisms, and there are no changes in general. In fact, the proof follows the arguments in \cite{Taubes:SWtoGr}*{Proposition 2.1, Proposition 2.3} on the compact region $X_0$ of $\overline X_0$. Those arguments extend over the ends $(-\infty,0]\times\partial X_0$ of $\overline X_0$, as in the case of symplectizations \cite{Taubes:ECH=SWF4}*{Lemma 3.1}, because the analogous bounds on the monopoles over $\partial X_0$ are granted by \cite{Taubes:ECH=SWF4}*{Lemma 2.3}. We now outline the proof.

From $D_\bA\Psi=0$ we know that $D_\bA^* D_\bA\Psi=0$. We rewrite this equation using the Bochner-Weitzenb\"ock formula,
\begin{equation}
\label{Weitzenbock}
\nabla_\bA^*\nabla_\bA\Psi+\frac{R_g}{4}\Psi+\frac{1}{2}\cl_+(F_\bA^+)\Psi=0
\end{equation}
where $R_g$ denotes the Ricci scalar curvature of $g$ on $\overline X_0$. Introducing the components $\Psi=(\alpha,\beta)$,
$$\alpha=\frac{1}{2}\left(1+\frac{i}{2}\cl_+(\widehat\omega)\right)\Psi,\indent \beta=\frac{1}{2}\left(1-\frac{i}{2}\cl_+(\widehat\omega)\right)\Psi$$
we take the inner products of~\eqref{Weitzenbock} with $\Psi$ and $(\alpha,0)$ and $(0,\beta)$ separately. We then apply the maximum principle (using \cite{Taubes:ECH=SWF4}*{Lemma 2.3} as $s\to-\infty$ on the ends of $\overline X_0$) to each of the resulting equations. Then we combine all of the resulting inequalities to get the asserted bounds.
\end{proof}

\begin{proof}[Proof of Proposition~\ref{prop:PsiSurjective}]
With appropriate modifications coming from \cite{HutchingsTaubes:Arnold2,Hutchings:fieldtheory}, we will follow the proof of \cite{Taubes:ECH=SWF4}*{Theorem 1.2} and the proof of \cite{Taubes:Gr=SW}*{Proposition 2.10}. We note again that the point/loop constraints do not affect the arguments (see also \cite{Taubes:ECH=SWF5}*{\S4.e}), so no further comments will be made about them.

Assume first that special curves do not exist. The proof of \cite{Taubes:ECH=SWF4}*{Theorem 1.2} can then be copied to show that $\Psi_r$ is surjective, albeit with the following modifications that are also explained in \cite{HutchingsTaubes:Arnold2}*{\S7} and \cite{Hutchings:fieldtheory}.\footnote{The reader of \cite{Taubes:ECH=SWF4,HutchingsTaubes:Arnold2} will notice the appearance of the ``Seiberg--Witten action,'' denoted by $\a$ in \cite{Taubes:ECH=SWF4}*{Equation 3-2} and \cite{HutchingsTaubes:Arnold2}*{Equation 97}. In the context of our paper, it is a gauge-invariant functional $\a:\bB(S^1\times S^2,\s_{\xi_0}+1)\to\RR$ because $\s_{\xi_0}+1$ is a torsion spin-c structure. It differs from the Chern--Simons--Dirac functional $\lL_\text{CSD}$ by an $O(r)$ constant, and its set of critical points is precisely the moduli space of monopoles $\M(S^1\times S^2,\s_{\xi_0}+1)$.} We must replace the manifold $\RR\times M$ by $\overline X_0$ and introduce a piecewise-smooth function $s_*:\overline X_0\to(-\infty,0]$ that agrees with the $(-\infty,0]$ coordinate on each cylindrical end of $\overline X_0$ and equals 0 on $X_0$. Then we replace the subsets $[s_1,s_2]\times M$ by $s_*^{-1}([s_1,s_2])$, the 2-form $ds\wedge a+\frac{1}{2}* a$ by $\widehat \omega$, the 2-form $\frac{\partial}{\partial s}A\pm B_A$ by $F^\pm_A$ (for a given SW instanton $\d=[A,(\alpha,\beta)]\in\M$), and the spectral flow $f_\d$ by $\ind(\D_\d)$. Most importantly, we must replace the first assertion of the key lemma \cite{Taubes:ECH=SWF4}*{Lemma 6.1} with the following key lemma, which asserts that SW instantons on a symplectic cobordism give rise to pseudoholomorphic curves.

\begin{lemma}
\label{lem:SWtoGr}
Fix a nondegenerate monopole $\c\in\M(-\partial X_0,\s_{\xi_0}+1)$. Let $[A,(\alpha,\beta)]$ be an element of $\M_I(\c,\varnothing;\s_\s,\bar z,\bar\eta)$ for $r$ sufficiently large. Then
$$E(\c)<2\pi\rho_\s$$
and there exists a current $\cC\in\mM_I(\varnothing,\Theta;\tau_\omega(\s),\bar z,\bar\eta)$ asymptotic to the admissible orbit set $\Theta$ determined by $\c$ in Theorem~\ref{thm:generators}. Furthermore, given a real number $\delta>0$ there exists a real number $r_\delta\ge1$ such that
\begin{equation}
\label{eqn:distance}
\sup_{z\in\cC}\dist(z,\alpha^{-1}(0))+\sup_{z\in \alpha^{-1}(0)}\dist(\cC,z)<\delta
\end{equation}
whenever $r\ge r_\delta$.
\end{lemma}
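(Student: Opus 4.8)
The plan is to transplant Taubes' ``$SW\Rightarrow Gr$'' analysis --- from \cite{Taubes:SWtoGr,Taubes:Gr=SW} on the compact piece $X_0$ and from \cite{Taubes:ECH=SWF4} (with the adaptations to symplectic cobordisms of \cite{HutchingsTaubes:Arnold2}*{\S7} and \cite{Hutchings:fieldtheory}) on the cylindrical ends $(-\infty,0]\times\partial X_0$ --- so as to extract from the given SW instanton both the energy bound and the limiting current. Arguing by contradiction, one reduces to a sequence $r_n\to\infty$ with monopoles $\c_n\in\M(-\partial X_0,\s_\xi+1)$ and instantons $\d_n=[A_n,(\alpha_n,\beta_n)]\in\M_I(\c_n,\varnothing;\s_\s,\bar z,\bar\eta)$. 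First I would establish a crude, $r$-independent bound on the area of $\alpha_n^{-1}(0)$, i.e. on $\int_{\overline X}(1-|\alpha_n|^2)\,\widehat\omega\wedge(\cdot)$: one pairs the curvature equation in~\eqref{SW4} with $\widehat\omega$, integrates over $s_\ast^{-1}([s,0])$, lets $s\to-\infty$ using the exponential convergence of $\d_n$ to $\c_n$ on the ends, and controls the $\rho(\Psi)$, $F^+_{A_{K^{-1}}}$ and $|\beta_n|^2$ contributions with Lemma~\ref{lem:estimates}; the outcome is a bound in terms of $c_1(E)=\PD(\tau_\omega(\s))$, the perturbation, and a fixed topological cost. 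This is where the non-exactness of $(X_0,\omega)$ enters: $\widehat\omega$ has no global primitive, so one cannot integrate by parts as over a symplectization and must argue as in \cite{HutchingsTaubes:Arnold2}*{\S7}/\cite{Hutchings:fieldtheory}, tracking the Seiberg-Witten action on $\partial X_0$ (well-defined because $\s_\xi+1$ is torsion, and differing from $\lL_{\text{CSD}}$ by an $O(r)$ constant).

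With the area bound in hand, I would run the zero-set convergence. The pointwise bounds of Lemma~\ref{lem:estimates} let me copy the compactness arguments of \cite{Taubes:SWtoGr}*{\S2--4} on the compact region and of \cite{Taubes:ECH=SWF4}*{\S4--6} on the ends: a subsequence of the zero sets $\alpha_n^{-1}(0)$, with their natural integer multiplicities, converges --- as currents and in the Hausdorff sense --- to a $J$-holomorphic current $\cC$ in $\overline X$. No zero set escapes to infinity, since the energy flux through the ends is uniformly bounded and the $\c_n$ are nondegenerate, so each end of $\cC$ converges to a Reeb orbit of $(-\partial X_0,\lambda_\s)$; these assemble into an admissible orbit set $\Theta$, and the symplectization-side analysis of \cite{Taubes:ECH=SWF4} (the content that the first assertion of \cite{Taubes:ECH=SWF4}*{Lemma 6.1} replaces here) identifies $\Theta$ with the orbit set attached to the boundary value $\c_n$ by Theorem~\ref{thm:generators}, because the construction of $\c_\Theta$ recalled in Section~\ref{Maps between ECH and HM} makes $\alpha_{\c_\Theta}^{-1}(0)$ model $\Theta$ near its orbits. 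Genuinely multiply-covered ends are treated as in Appendix~\ref{Appendix}, and \cite{Gerig:taming}*{Proposition 3.16} rules out the bad ends for the relevant components.

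It remains to check that $\cC$ carries the advertised data. It represents $\tau_\omega(\s)$ since $\alpha_n\in\Gamma(E)$ gives $[\alpha_n^{-1}(0)]=\PD c_1(E)=\tau_\omega(\s)$; it has ECH index $I$ by the index correspondence between the SW index and the ECH index of the limiting curve, as in \cite{Taubes:ECH=SWF4} and \cite{HutchingsTaubes:Arnold2}*{\S7} (see also \cite{Gerig:taming}*{\S2.3}); and it meets each $z_i\in\bar z$ and each loop $\eta_i\in\bar\eta$ because $\alpha_n$ vanishes at $z_i$ and at a point $w_i\in\eta_i$ for every $n$, and Hausdorff convergence keeps those zeros on $\cC$. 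Finally, now that $\mM_I(\varnothing,\Theta;\tau_\omega(\s),\bar z,\bar\eta)$ is known to be nonempty, the action bound $\aA(\Theta)<\rho(\tau_\omega(\s))$ recalled in Section~\ref{Construction of the near-symplectic Gromov invariants} (from \cite{Gerig:taming}*{\S3.2}, \cite{Hutchings:fieldtheory}) applies; matching $E(\c_n)$ with $2\pi\aA(\Theta)$ up to an $O(r^{-1})$ error (Taubes' relation between monopole energy and orbit-set action) upgrades the crude bound to $E(\c_n)<2\pi\rho(\tau_\omega(\s))$ for $r$ large. The uniform Hausdorff convergence is precisely the estimate~\eqref{eqn:distance}: for each $\delta>0$ there is an $r_\delta\ge1$ beyond which $\alpha^{-1}(0)$ and $\cC$ lie within $\delta$ of one another, and both $E(\c)<2\pi\rho(\tau_\omega(\s))$ and the existence of $\cC$ then follow for all $r\ge r_\delta$.

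The step I expect to be the main obstacle is the compactness on the \emph{non-exact} cobordism: stitching Taubes' closed-manifold zero-set analysis on $X_0$ to his symplectization analysis on the ends while controlling the energy flux through $\partial X_0$ without a global primitive for $\widehat\omega$, and forcing the limiting current to be asymptotic to exactly the orbit set $\Theta$ produced by Theorem~\ref{thm:generators} rather than to some other orbit set in the class $-\partial\tau_\omega(\s)$. (The apparent circularity in using ``$\mM$ nonempty $\Rightarrow\aA(\Theta)<\rho$'' is harmless, since the crude $r$-independent energy bound is obtained before the current is produced.)
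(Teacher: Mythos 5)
Your proposal follows the same overall route the paper takes — transplant Taubes' ``$SW\Rightarrow Gr$'' analysis to the non-exact cobordism $(X_0,\omega)$, obtain an $r$-independent area/energy bound, extract a limiting $J$-holomorphic current via zero-set convergence, identify the asymptotic orbit set with the one attached to $\c$ by Theorem~\ref{thm:generators}, and then read off the homology class, the ECH index, and the constraint incidences. The paper, however, packages most of this into citations: it invokes a lemma from \cite{Hutchings:fieldtheory} (the strong-cobordism generalization of \cite{HutchingsTaubes:Arnold2}*{Proposition 5.2} and \cite{Taubes:SWtoGr}*{Theorem 1.3}) for the energy bound and the existence of a \emph{possibly broken} $J$-holomorphic curve, then uses \cite{Gerig:taming}*{Proposition 3.13} to conclude the curve is unbroken, then cites \cite{Gardiner:gradings}*{Theorem 5.1} (the cobordism generalization of \cite{Taubes:ECH=SWF3}*{\S2.b.6}) for the equality $I(\cC)=I$, and finally cites \cite{HutchingsTaubes:Arnold2}*{Proposition 7.1} for the Hausdorff estimate~\eqref{eqn:distance}. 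Your longer re-derivation is essentially filling in what those references prove.

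There is one genuine omission: you never address the possibility that the limit of the zero sets $\alpha_n^{-1}(0)$ is an SFT-type \emph{broken} building rather than an honest current in a single copy of $\overline X$. You rule out ``zero set escaping to infinity'' and appeal to nondegeneracy of $\c_n$, but that only controls the ends — it does not preclude multi-level breaking along the cylindrical portion. The paper's cited lemma explicitly produces a ``possibly broken'' curve precisely because this is a real possibility, and then discards brokenness by appealing to \cite{Gerig:taming}*{Proposition 3.13} (the transversality/finiteness result for the moduli spaces at hand), not Proposition 3.16 as you cite. Relatedly, your reference for the index identity $I(\cC)=I$ is vague (``as in \cite{Taubes:ECH=SWF4} and \cite{HutchingsTaubes:Arnold2}*{\S7}''); the precise statement needed here is \cite{Gardiner:gradings}*{Theorem 5.1}, which establishes $I(Z_\alpha)=I$ on cobordisms (the paper elaborates that $\alpha^{-1}(0)$ determines $Z_\alpha$ and $[\cC]=Z_\alpha$). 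These are both fixable by inserting the right citations, but as written your argument has a gap at the unbrokenness step.
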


We postpone the proof of this lemma until the end of this section. The remaining modifications to the proof of \cite{Taubes:ECH=SWF4}*{Theorem 1.2} are as follows. As explained in \cite{HutchingsTaubes:Arnold2}*{\S7}, the arguments in \cite{Taubes:ECH=SWF4}*{\S3--5} over symplectizations are replaced by the corresponding arguments in \cite{HutchingsTaubes:Arnold2}*{\S7.3--7.5} over exact symplectic cobordisms. Then as explained more generally in \cite{Hutchings:fieldtheory} over strong (non-exact) symplectic cobordisms, they are further augmented over our symplectic cobordism $(X_0,\omega)$ so that:

	$\bullet$ the (broken) pseudoholomorphic curves represent the relative homology class $\tau_\omega(\s)$, and
	
	$\bullet$ the energy bounds on the SW monopoles over $\partial X_0$ are increased by the amount $2\pi\rho_\s$.

\noindent
For example, these two bulleted statements are apparent in the statement of Lemma~\ref{lem:SWtoGr}.

\bigskip
Assume now that special curves exist, and note that Lemma~\ref{lem:SWtoGr} does not depend on the existence of special curves or lack thereof. To prove surjectivity of $\Psi_r$ in general it remains to further modify the proof of \cite{Taubes:ECH=SWF4}*{Theorem 1.2} to show that \textit{each element $\d\in\M$ lies in the image of $\Psi_{\cC,r}$ for the element $\cC\in\mM$ afforded by Lemma~\ref{lem:SWtoGr}.} This is precisely the final assertion of \cite{Taubes:Gr=SW}*{Proposition 2.10} for $\overline X_0$ closed, which as explained in \cite{Taubes:Gr=SW}*{\S5}, is a special case of \cite{Taubes:Gr=SW}*{Proposition 5.1}. We now explain these modifications in our scenario.

When following the proof of \cite{Taubes:ECH=SWF4}*{Theorem 1.2} to prove surjectivity of $\Psi_r$, we make use of a refinement of Lemma~\ref{lem:SWtoGr}: \textit{The bound $\delta$ on the Hausdorff distance in~\eqref{eqn:distance} is replaced by $\delta r^{-1/2}$.} The analogous statement is \cite{Taubes:ECH=SWF4}*{Lemma 6.2} which refines \cite{Taubes:ECH=SWF4}*{Lemma 6.1}, and its proof in \cite{Taubes:ECH=SWF4}*{\S7} extends (without further modifications than the ones already listed) to our scenario in the absence of special curves. Taubes remarks at the beginning of \cite{Taubes:ECH=SWF4}*{\S7} that the arguments for \cite{Taubes:ECH=SWF4}*{Lemma 6.2} closely follow the arguments in \cite{Taubes:Gr=SW}*{\S5} for \cite{Taubes:Gr=SW}*{Proposition 5.1}, so by explaining how the proof of \cite{Taubes:Gr=SW}*{Proposition 5.1} extends to our scenario we will have simultaneously explained why this refinement of Lemma~\ref{lem:SWtoGr} holds in the presence of special curves.

\begin{remark}
The relevant arguments from \cite{Taubes:ECH=SWF4}*{\S7} and \cite{Taubes:Gr=SW}*{\S5} involve ``special sections'' of powers of $N_{C_k}$ associated with any component $(C_k,d_k)\in\cC$, denoted by $\fo$ in \cite{Taubes:ECH=SWF4}*{Equation 7-9} and by $h$ in \cite{Taubes:Gr=SW}*{Equation 5.26}; they differ by an inconsequential factor of $\frac{1}{d_k\pi}$. In this regard, Taubes also remarks at the beginning of \cite{Taubes:ECH=SWF4}*{\S7} that \cite{Taubes:Gr=SW}*{Lemma 5.5} (which is used to prove \cite{Taubes:Gr=SW}*{Proposition 5.1}) is flawed and must be replaced by \cite{Taubes:ECH=SWF4}*{Lemma 7.1}. For the convenience of the reader, here is a further dictionary: \cite{Taubes:Gr=SW}*{Lemma 5.4} corresponds to \cite{Taubes:ECH=SWF4}*{Lemma 4.10}\footnote{There is a typo in the statement of \cite{Taubes:ECH=SWF4}*{Lemma 4.10}: replace the word ``least'' with ``most'' in the last sentence.} while \cite{Taubes:Gr=SW}*{Lemma 5.8} corresponds to \cite{Taubes:ECH=SWF4}*{Lemma 7.2}.
\end{remark}

The proof of \cite{Taubes:Gr=SW}*{Proposition 5.1} can be copied for the following reason. We have already explained in Sections~\ref{SW instantons from multiply covered}+~\ref{Kuranishi structures} that special planes and special tori may be treated on equal footing, in the sense that they are endowed with an embedded fixed-radius tubular neighborhood (through their normal bundle), they are $d$-nondegenerate (for a given multiplicity $d\ge1$), and their associated space $\zZ_0$ (given by Definition~\ref{def:csurface}) is compact. Thus, those aspects of the proof of \cite{Taubes:Gr=SW}*{Proposition 5.1} referring to a multiply covered special torus may now refer to any multiply covered special curve. The remaining aspects of the proof of \cite{Taubes:Gr=SW}*{Proposition 5.1} consist of local arguments which carry over to our scenario in addition to the analogous arguments in \cite{Taubes:ECH=SWF4}*{\S7}.\footnote{The proof of \cite{Taubes:Gr=SW}*{Proposition 5.1} hinges on \cite{Taubes:Gr=SW}*{Proposition 5.3, Lemma 5.10}, particularly the assertion that at least one of the ``special sections'' $\fo$ is nontrivial. Some details to the proof of this assertion are lacking in \cite{Taubes:Gr=SW}, but are found by mimicking the proof of \cite{Taubes:ECH=SWF4}*{Lemma 7.4} (private communication with Taubes). In this regard, we warn the reader that the statement of \cite{Taubes:ECH=SWF4}*{Lemma 7.4} is technically incorrect but fixed by replacing the constraint $\rho\in[\chi \delta_nr_n^{-1/2},\kappa^{-2}]$ with $\rho=\chi \delta_nr_n^{-1/2}$.}
\end{proof}

\begin{proof}[Proof of Lemma~\ref{lem:SWtoGr}]
We start with the following lemma, which is proved more generally in \cite{Hutchings:fieldtheory} for strong symplectic cobordisms. It is the analog of \cite{HutchingsTaubes:Arnold2}*{Proposition 5.2} over exact symplectic cobordisms and of \cite{Taubes:SWtoGr}*{Theorem 1.3} over closed symplectic manifolds.

\begin{lemma}
Let $[A,(\alpha,\beta)]$ be an element of $\M_I(\c,\varnothing;\s_\s,\bar z,\bar\eta)$ for $r$ sufficiently large. Then $E(\c)<2\pi\rho_\s$, and there exists a (possibly broken) $J$-holomorphic curve $\cC$ which contains $\bar z\cup\bar\eta$, has total homology class $\tau_\omega(\s)$, and is asymptotic to the admissible orbit set $\Theta$ determined by $\c$ in Theorem~\ref{thm:generators}.
\end{lemma}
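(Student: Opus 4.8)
The plan is to mimic Taubes' ``$SW\Rightarrow Gr$'' construction --- established in \cite{Taubes:SWtoGr} for closed symplectic $4$-manifolds and in \cite{HutchingsTaubes:Arnold2}*{\S5.3--5.5} for exact symplectic cobordisms --- and to adapt it to the strong (non-exact) symplectic cobordism $(X_0,\omega)$, the extra bookkeeping forced by non-exactness being supplied by \cite{Hutchings:fieldtheory}. First I would record a priori pointwise bounds: Lemma~\ref{lem:estimates} gives $|\alpha|\le 1+\kappa r^{-1}$ and $|\beta|^2\le\kappa r^{-1}(1-|\alpha|^2)+\kappa^2 r^{-2}$ on all of $\overline X$, and feeding these into the curvature equation~\eqref{SW4} shows that $F^+_A$ is uniformly small away from $\alpha^{-1}(0)$ and that the measures $\frac{r}{2}(1-|\alpha|^2)\,\dv$ have uniformly bounded mass, concentrating along $\alpha^{-1}(0)$. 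Over the cylindrical ends this uses the corresponding bounds on the $S^1\times S^2$-monopoles from \cite{Taubes:ECH=SWF4}*{Lemma 2.3} together with the maximum principle as $s\to-\infty$, just as in the symplectization case.

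Next I would establish the energy bound $E(\c)<2\pi\rho(\tau_\omega(\s))$. Truncating the cylindrical end of $\overline X$ and applying Stokes' theorem to the curvature equation~\eqref{SW4}, the energy $E(\c)$ along the negative end is controlled by the $\widehat\omega$-area of the relative class $\tau_\omega(\s)$ plus the bounded contribution of the perturbation term; the chamber hypothesis --- we work in the $c_1(\s)$-chamber in which the left-hand side of~\eqref{chamber} is negative, which is the one that pertains to the large-$r$ equations over the completion of $(X_0,\omega)$ --- supplies the sign needed to close the estimate, with the resulting constant being precisely the $\rho(\tau_\omega(\s))$ of \cite{Gerig:taming}*{\S3.2}. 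This is the step at which the strong (rather than exact or closed) hypothesis genuinely intervenes --- in Taubes' and Hutchings--Taubes' settings the corresponding constant is $0$ --- and it is the content carried out in \cite{Hutchings:fieldtheory}.

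With the estimates in hand I would let $r\to\infty$. Taubes' local structure and convergence arguments (\cite{Taubes:SWtoGr}*{Theorem 1.3} over the compact region $X_0$, and \cite{Taubes:ECH=SWF4}*{\S3--5} over the ends) yield a subsequence along which the zero sets $\alpha^{-1}(0)$, weighted by the transverse vortex number, converge as currents and in the Hausdorff sense to a possibly broken $J$-holomorphic current $\cC$ in $\overline X$; over the ends $\cC$ is asymptotic to an admissible orbit set $\Theta$ on $(-\partial X_0,\lambda_\s)$ with $\aA(\Theta)<\rho(\tau_\omega(\s))$ (a consequence of the energy bound), so $\Theta$ is built from $\rho$-flat Reeb orbits. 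Since the instanton is asymptotic to the nondegenerate monopole $\c$, its restriction to the ends limits to $\c$, and matching this with the bijection of Theorem~\ref{thm:generators} identifies $\Theta$ as the admissible orbit set determined by $\c$. The homology class is then forced: $\alpha$ is a section of the line bundle $E$ with $\SS_+=E\oplus K^{-1}E$ for $\s_\s$, so its zero set is Poincar\'e dual to $c_1(E)$ and hence $[\cC]=\PD(c_1(E))=\tau_\omega(\s)$. Finally, since $\alpha$ vanishes on $\bar z$ and at a point of each $\eta_i$ by the definition of $\M_I(\c,\varnothing;\s_\s,\bar z,\bar\eta)$, and these loci lie in a fixed compact region by the end estimates, the Hausdorff convergence forces $\cC$ to contain $\bar z$ and to meet each $\eta_i$.

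I expect the energy estimate of the second step to be the main obstacle, since it is exactly where non-exactness of $(X_0,\omega)$ bites and where the chamber condition~\eqref{chamber} must be invoked; once this uniform bound is in place, the current convergence and the identification of $\Theta$ in terms of $\c$ follow from Taubes' machinery applied locally along $\cC$, verbatim as in the symplectization and closed cases --- and indeed the cited references carry all of this out in greater generality.
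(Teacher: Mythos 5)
Your proposal is consistent with the paper's approach: the paper does not actually present a proof of this lemma but instead cites \cite{Hutchings:fieldtheory} for the general strong-cobordism case, noting it is the analog of \cite{HutchingsTaubes:Arnold2}*{Proposition 5.2} (exact cobordisms) and \cite{Taubes:SWtoGr}*{Theorem 1.3} (closed symplectic manifolds), and your outline is a faithful sketch of what those references carry out. The one place where your narration is slightly off-beam is the role of the chamber condition in the energy bound: the constant $\rho(\tau_\omega(\s))$ and the estimate $E(\c)<2\pi\rho(\tau_\omega(\s))$ come out of the Stokes'-theorem computation on $(X_0,\omega)$ together with the curvature bounds from Lemma~\ref{lem:estimates} and the specific structure of $\rho$ from \cite{Gerig:taming}*{\S3.2}; the chamber condition~\eqref{chamber} is the reason the large-$r$ perturbations land in a well-defined chamber for the invariant when $b^2_+(X)=1$, rather than the mechanism that produces the sign in the energy estimate. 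That said, your identification of non-exactness as the step where new input (beyond Taubes and Hutchings--Taubes) is genuinely required is exactly the point emphasized by the paper's citation pattern.
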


Now thanks to \cite{Gerig:taming}*{Proposition 3.13}, $\cC$ is in fact unbroken. It then also has ECH index $I$, thanks to \cite{Gardiner:gradings}*{Theorem 5.1}. To elaborate on the previous sentence, $\alpha^{-1}(0)$ is close (with respect to the Hausdorff distance) to the image of $\cC$ in $\overline X_0$ for $r$ sufficiently large, and it determines a relative homology class $Z_\alpha\in H_2(\overline X_0,\varnothing,\Theta)$ such that $[\cC]=Z_\alpha$. Thus, $I(\cC)=I(Z_\alpha)$, and the fact that $I(Z_\alpha)=I$ is the content of \cite{Gardiner:gradings}*{Theorem 5.1}.\footnote{The proof of \cite{Gardiner:gradings}*{Theorem 5.1} over symplectic cobordisms mimics that given in \cite{Taubes:ECH=SWF3}*{\S2.b, \S2.c} of the analogous statement over symplectizations.}

The proof of the remaining part of Lemma~\ref{lem:SWtoGr} follows that of \cite{HutchingsTaubes:Arnold2}*{Proposition 7.1}.
\end{proof}

\subsection{Proof of Theorem~\ref{thm:moduli}}
\label{Proof of Theorem}

\indent\indent
We now combine the results of the previous sections. Especially in this section, we use $\ZZ/2\ZZ$ coefficients and hence ignore orientations.

\begin{proof}[Proof of Theorem~\ref{thm:moduli}]
As a warm-up case, suppose that $\mM$ does not contain any elements with special curve components. Propositions~\ref{prop:Psi} and~\ref{prop:PsiSurjective} then imply that $\Psi_r$ is an honest bijection\footnote{This bijection is the analog of \cite{Taubes:ECH=SWF1}*{Theorem 4.3} over symplectizations and of \cite{Taubes:Gr=SW}*{Proposition 2.6} over closed symplectic manifolds.}
$$\mM\longleftrightarrow\M$$
and the desired result follows immediately.

As a second warm-up case, make the following assumptions: $I=0$, and each space $\zZ_0^{(k')}$ associated with any special curve $(C_{k'},d_{k'})\in\cC\in\mM$ consists of only regular points. Since each $\zZ_0^{(k'')}$ associated with any non-special curve $(C_{k''},1)\in\cC\in\mM$ is identified with $\ker(D_{C_{k''}})$ (see Section~\ref{Vortices and orbits and curves}), the former assumption implies that $\zZ_0^{(k'')}$ is a single point for generic $J$ as needed to make~\ref{def:GromovCycle} well-defined. Since each $\zZ_0^{(k')}$ associated with a special curve is compact by Proposition~\ref{prop:Zcompact}, the latter assumption implies that $\zZ_0^{(k')}$ is a finite set of points. Propositions~\ref{prop:Psi} and~\ref{prop:PsiSurjective} then imply that $\Psi_r$ induces a bijection\footnote{This bijection is the analog of \cite{Taubes:Gr=SW}*{Proposition 2.9} over closed symplectic manifolds.}
$$\bigcup_{\lbrace(C_k,d_k)\rbrace\in\mM}\bigtimes_k\zZ_0^{(k)}\longleftrightarrow\M$$
In order to finish the proof of Theorem~\ref{thm:moduli} in this case, we need to show that the number of points in each $\zZ_0^{(k)}$ is equivalent modulo 2 to $r(C_k,d_k)$, where $r(C,d)$ is the integer weight attached to each component $(C,d)\in\cC$ such that $q(\cC)=\pm\prod_{(C,d)\in\cC}r(C,d)$ (see \cite{Gerig:taming}*{\S3.5}). The proof of such equality will be subsumed in the proof of the general scenario, where $\zZ_0^{(k)}$ may also contain non-regular points.

Consider now the general scenario. Propositions~\ref{prop:Psi} and~\ref{prop:PsiSurjective} then reduce Theorem~\ref{thm:moduli} to the following claim:
\begin{equation}
\label{claim}
\prod_{k'} r(C_{k'},d_{k'})\equiv\left|\psi_{\cC,r}^{-1}(0)\cap\yY_{\bar z,\bar\eta}\right|\;\mod2
\end{equation}
where $k'$ indexes over the set of special curve components $\cC'$. We establish this claim in two steps.

\textit{Step 1.} As explained in \cite{Taubes:Gr=SW}*{\S2.g.2}, there is a well-defined \textit{weight}
$$r'(C_{k'},d_{k'})\in\ZZ/2\ZZ$$
given by the (modulo 2) count of zeros of any smooth perturbation $w:\kK^{(k')}_\Lambda\to\Lambda_{k'}$ of the map $\psi^{(k')}_\Lambda:\kK^{(k')}_\Lambda\to\Lambda_{k'}$ from Lemma~\ref{lem:5.1Gr}, such that $|w|<|\psi^{(k')}_\Lambda|$ on the complement of a compact neighborhood of $\zZ_0^{(k')}$ and such that $\psi^{(k')}_\Lambda+w$ has only nondegenerate zeros. In particular, if $\zZ_0^{(k')}$ is a finite set of regular points then $r'(C_{k'},d_{k'})$ is its cardinality.

As explained in \cite{Taubes:GrtoSW}*{\S5.i.1}, the map $\psi_{\cC,r}$ in~\eqref{eqn:psi} satisfies
$$\left|\psi_{\cC,r}-\bigtimes_{k'}\psi_\Lambda^{(k')}\right|\le \zeta r^{-1/2}$$
for some $r$-independent constant $\zeta$, where the norm on $\bigoplus_{k'}\Lambda_{(k')}$ is the product of the $L^2$-norms on each of the factors. Here, we view $\psi_{\cC,r}$ as a function restricted to $\bigtimes_{k'}\kK_\Lambda^{(k')}$ (which is diffeomorphic to $\yY_{\bar z,\bar\eta}$). Thus, it follows that
$$\prod_{k'} r'(C_{k'},d_{k'})\equiv\left|\psi_{\cC,r}^{-1}(0)\cap\yY_{\bar z,\bar\eta}\right|\;\mod2$$
for $r$ sufficiently large.

\textit{Step 2.} Given the result of Step 1, it remains to show that $r'(C_{k'},d_{k'})=r(C_{k'},d_{k'})$ in order to establish~\eqref{claim} and hence complete the general proof of Theorem~\ref{thm:moduli}.

\begin{prop}
For generic $J$ as needed to make~\ref{def:GromovCycle} well-defined, the weights $r'(C,d)$ and $r(C,d)$ are equal for any $J$-holomorphic special curve $C$ with multiplicity $d$. In particular, their value is 1 when $C$ is a special plane.
\end{prop}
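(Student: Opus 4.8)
The plan is to split the argument according to whether $C$ is a torus or a plane. The torus case will be a transcription of Taubes' analysis, while the plane case will be handled by deforming the parameter $\mu_C$ to zero and invoking super-rigidity at the endpoint.

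For a special torus $C$ of multiplicity $d$, I would argue that $r'(C,d)=r(C,d)$ is precisely the corresponding identity for a multiply covered special torus in a closed symplectic $4$-manifold. The weight $r(C,d)$ of \cite{Gerig:taming}*{\S3.5} is by construction Taubes' weight, and $r'(C,d)$ is defined by the recipe recalled above from \cite{Taubes:Gr=SW}*{\S2.g.2}; Taubes' proof that these agree (see \cite{Taubes:Gr=SW}*{\S2.g} together with \cite{Taubes:counting}) is carried out entirely inside a fixed-radius tubular neighborhood of $C$ modeled on its normal bundle $N_C$, and it uses only the pair $(\nu_C,\mu_C)$ attached to $D_C$ and the $d$-nondegeneracy of $C$ (available here by \cite{Gerig:taming}*{Lemma 3.20}). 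Since such a neighborhood embeds in $\overline X$ (Section~\ref{Kuranishi structures}), the argument goes through verbatim; nothing about the noncompactness of $\overline X$ enters.

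For a special plane $C$ of multiplicity $d$, the goal is $r'(C,d)=r(C,d)=1$. I would first recall from Lemma~\ref{lem:5.1Gr} that $r'(C,d)$ is the mod $2$ count of zeros of a generic compactly-supported perturbation of $\psi_\Lambda\colon\kK_\Lambda\to\Lambda$, whose zero set is $\zZ_0$, with $\dim\kK_\Lambda=\dim\Lambda$ since $\ind_\RR\Delta_\c=\ind_\RR D_C=0$. Then I would run the one-parameter deformation obtained by replacing $\mu_C$ with $t\mu_C$, $t\in[0,1]$, in~\eqref{eqn:csurface2} and~\eqref{eqn:Delta}, writing $\zZ_0^t$ for the corresponding solution space. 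By Proposition~\ref{prop:Zcompact} the spaces $\zZ_0^t$ are uniformly compact, so a single obstruction subspace $\Lambda$ and Kuranishi chart $\kK_\Lambda$ can be chosen (as in Lemma~\ref{lem:5.1Gr}) to serve for all $t$, yielding a continuous family $\psi_\Lambda^t\colon\kK_\Lambda\to\Lambda$ whose zero sets stay in a fixed compact subset of the interior; homotopy invariance of the mod $2$ zero count then shows $r'(C,d)$ is the same at $t=0$ as at $t=1$. At $t=0$ equation~\eqref{eqn:csurface2} is complex-linear, so $\zZ_0^0$ is a linear subspace of $L^2_1(\bigoplus_j N_C^j)$ which, being compact by Proposition~\ref{prop:Zcompact}, must equal $\{0\}$; since $\Delta^0_0=\dbar+\nu_C\aleph$ has index zero and trivial kernel it is surjective, so $0$ is a regular zero and, as recorded in Step~1 of the proof of Theorem~\ref{thm:moduli}, $r'(C,d)|_{t=0}=|\zZ_0^0|=1$. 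Hence $r'(C,d)=1$, and $r(C,d)=1$ for a special plane follows either directly from its definition in \cite{Gerig:taming}*{\S3.5} or by applying the same $t=0$ computation to the obstruction-bundle count defining $r$.

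The step I expect to be the main obstacle is making the $t$-homotopy for $r'$ rigorous: one must verify, using Proposition~\ref{prop:Zcompact}, that $\Lambda$, $\kK_\Lambda$, and the maps $\psi^t_\Lambda$ can genuinely be arranged to depend continuously on $t$ over $[0,1]$ and that the uniform compactness of $\{\zZ_0^t\}$ prevents zeros of the perturbed maps from escaping to $\partial\kK_\Lambda$ along the homotopy. Once this is secured, and granting the purely local nature of Taubes' identification $r'=r$ for tori, the proposition follows.
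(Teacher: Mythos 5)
Your proposal tracks the paper's proof essentially line for line: the torus case is disposed of by citing Taubes' closed-manifold result \cite{Taubes:Gr=SW}*{Propositions 2.12, 2.15}, and the plane case is handled by the same $t\mu_C$ deformation, compactness of $\zZ_0^t$ via Proposition~\ref{prop:Zcompact}, and the observation that at $t=0$ the complex-linear operator $\dbar+\nu_C\aleph$ is invertible so $\zZ_0^0$ is a single regular point. The minor stylistic difference is that you derive the trivial kernel of $\dbar+\nu_C\aleph$ self-containedly (a compact linear subspace of a Banach space is $\{0\}$), where the paper instead points to the analogous step in Taubes' proof of Prop.\ 2.15; the substance and conclusion are identical.
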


\begin{proof}
If $C$ is a special torus then this is precisely the result of \cite{Taubes:Gr=SW}*{Proposition 2.12, Proposition 2.15}. Similarly to the proof of \cite{Taubes:Gr=SW}*{Proposition 2.12}, if $C$ a special plane then $r'(C,d)$ is unaffected by replacing $\mu_C$ in the definition of $\zZ_0$ with $t\mu_C$ for any $t\in[0,1]$, noting that the resulting space $\zZ_0^t$ of sections is still compact by Proposition~\ref{prop:Zcompact}. In particular, $r'(C,d)$ is equal to the count of points of $\zZ^0_0$. Similarly to the end of the proof of \cite{Taubes:Gr=SW}*{Proposition 2.15}, $\zZ^0_0$ consists of a single regular point (namely, the constant map to the unique ``symmetric'' vortex) because the complex linear operator $\dbar+\nu_C\aleph$ has trivial kernel and cokernel. Thus, $r'(C,d)=1$.
\end{proof}

\end{proof}

\section{Relation of Seiberg--Witten counts}
\label{Relation of Seiberg--Witten counts}

\indent\indent
The goal of this section is to finally prove Theorem~\ref{thm:SWGr2} (and thus Theorem~\ref{thm:SWGr1}). It follows from Theorem~\ref{thm:generators} and Theorem~\ref{thm:moduli} using $I=d(\s)$ that, for $r$ sufficiently large, the Gromov cycle $\Phi_{Gr}$ (given by Definition~\ref{def:GromovCycle}) is chain-isomorphic over $\ZZ/2\ZZ$ to the \textit{Seiberg--Witten cocycle}
$$\Phi_{SW}:=\sum_{\Theta}\M_{\c_\Theta}\c_\Theta\in \Cfrom^{g(\s)}(-\partial X_0,\s_{\xi_0}+1)$$
where the sum is over admissible orbit sets $\Theta$ in the grading $g(\s)$ with $[\Theta]=-\partial\tau_\omega(\s)$ and
\begin{equation}
\label{eq:SWcoeff}
\M_{\c_\Theta}:=\sum_{\d\in\M_{d(\s)}(\c_\Theta,\varnothing;\s_\s,\bar z,\bar\eta)}q(\d)\in\ZZ/2\ZZ
\end{equation}
The integer $Gr_{X,\omega}(\s)\big([\eta_1]\wedge\cdots\wedge[\eta_p]\big)$ is thus equal (modulo 2) to the coefficient of the class
$$[\Phi_{SW}]\in\bigotimes_{k=1}^N\Hfrom^{[\xi_*]}(S^1\times S^2,\s_{\xi_0}+1)\cong\ZZ$$
as a multiple of the generator $\1\in \bigotimes^N_{k=1} \Hfrom^{[\xi_*]}(S^1\times S^2,\s_{\xi_0}+1)$, where again we have anticipated $g(\s)=N[\xi_*]$. In fact, this coefficient is the corresponding Seiberg--Witten invariant:

\begin{theorem}
\label{thm:stretchNeck}
Fix $(X,\omega)$ and assume $\s\in\Spinc(X)$ is such that $E\cdot\tau_\omega(\s)\ge-1$ for all $E\in\eE_\omega$. Fix an integer $p$ such that $0\le p\le d(\s)$ and $d(\s)-p$ is even, and fix an ordered set of homology classes $[\bar\eta]:=\big\lbrace[\eta_i],\ldots,[\eta_p]\big\rbrace\subset H_1(X;\ZZ)/\operatorname{Torsion}$. Then
$$Gr_{X,\omega}(\s)\big([\eta_1]\wedge\cdots\wedge[\eta_p]\big)\equiv SW_X(\s)\big([\eta_1]\wedge\cdots\wedge[\eta_p]\big)\;\mod2$$
and $g(\s)=N[\xi_*]$ as an absolute grading of the $N$-fold tensor product of $\Hfrom^*(S^1\times S^2)$.
\end{theorem}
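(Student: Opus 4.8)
The plan is a neck-stretching argument along $Y:=-\partial X_0=\partial\nN=\bigsqcup_{k=1}^N(S^1\times S^2)$, viewing $X$ as the composite cobordism $\varnothing\overset{\nN}{\longrightarrow}Y\overset{X_0}{\longrightarrow}\varnothing$. I would equip $X$ with a family of metrics inserting an ever-longer cylindrical neck $[-T,T]\times Y$ modeled on $\ker\lambda_\s$, use Taubes' large-$r$ perturbations of~\eqref{SW4} on the $X_0$ side, and glue these to generic admissible perturbations over $\nN$. Since the large-$r$ Taubes perturbation lies in the chamber singled out by~\eqref{chamber} — and since $SW_X(\s)$ is chamber-independent when $b^2_+(X)>1$ — this perturbed count is $SW_X(\s)$ in the chamber named in the theorem. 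The points $\bar z$ and loops $\bar\eta$ have already been chosen inside $X_0$, so after inserting $U^{\frac12(d(\s)-p)}\,[\eta_1]\wedge\cdots\wedge[\eta_p]$ the constrained Seiberg-Witten count on $X$ localizes to $X_0$, and the composition law for Seiberg-Witten invariants along a cut (\cite{KronheimerMrowka:SWF}*{\S26}) gives, for $T$ and $r$ sufficiently large,
\[
SW_X(\s)\big([\eta_1]\wedge\cdots\wedge[\eta_p]\big)\equiv\big\langle[\Phi_{SW}],\,\Phi_\nN\big\rangle\pmod{2},
\]
where $\Phi_\nN$ is the relative Seiberg-Witten invariant of the completed cobordism $\overline{\nN}$ in $\Cfrom_\ast(Y,\s_\xi+1)$ and the pairing is the tautological one $\Cfrom^\ast\otimes\Cfrom_\ast\to\ZZ$. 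Here the hypothesis $b^2_+(X)\ge1$ is used to ensure that for generic admissible perturbations $\M(\s)$ is regular and reducible-free, that the broken limits along $Y$ avoid the (torsion) reducible locus on $S^1\times S^2$, and — as flagged in \S\ref{Kronheimer-Mrowka's formalism} — that Taubes' large-$r$ equations are admissible for the gluing theory.

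The next step is to identify $\Phi_\nN$. Since $\nN$ is a disjoint union of $N$ copies of $S^1\times B^3$, $\Phi_\nN=\bigotimes_{k=1}^N\Phi_{S^1\times B^3}$, and $S^1\times B^3$ is the simplest filling of $S^1\times S^2$: in the torsion spin-c structure $\s_\xi+1$ one has $\Hfrom_{[\xi_\ast]+n}(S^1\times S^2)\cong\ZZ$ for $n\ge0$ and $0$ otherwise (the homological counterpart of the $ECH$ computation recalled in \S\ref{Construction of the near-symplectic Gromov invariants}, cf.\ \cite{Gerig:taming}*{Proposition 3.2}), and a direct computation — using the $S^1$-invariance of $S^1\times B^3$, or the $L$-flat description in \cite{Gerig:taming}*{\S3.1} — shows that $\Phi_{S^1\times B^3}$ is the generator of the bottom group, lying in grading $[\xi_\ast]$ and pairing to $1$ with the positive generator $\1$ of $\Hfrom^{[\xi_\ast]}(S^1\times S^2)$. (It cannot vanish, since that would force $SW_X\equiv0$, contradicting the symplectic case.) Hence $[\Phi_\nN]$ is the generator of $\Hfrom_{N[\xi_\ast]}(Y)\cong\ZZ$ dual to $\1=\bigotimes_k\1$; because the pairing is nondegenerate and vanishes between non-complementary gradings while $[\Phi_\nN]\ne0$, the class $[\Phi_{SW}]\in\Hfrom^{g(\s)}(Y)$ can pair nontrivially only if $g(\s)=N[\xi_\ast]$. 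This establishes the ``in particular'' clause and retroactively makes the defining formula for $Gr_{X,\omega}(\s)$ in \S\ref{Construction of the near-symplectic Gromov invariants} well posed.

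Granting $g(\s)=N[\xi_\ast]$, pairing $[\Phi_{SW}]$ against $[\Phi_\nN]$ extracts precisely the coefficient of $[\Phi_{SW}]\in\bigotimes_k\Hfrom^{[\xi_\ast]}(S^1\times S^2)\cong\ZZ$ as a multiple of $\1$, so $SW_X(\s)\big([\eta_1]\wedge\cdots\wedge[\eta_p]\big)$ is congruent mod $2$ to that coefficient. On the other hand, Theorems~\ref{thm:generators} and~\ref{thm:moduli} with $I=d(\s)$ identify $\Phi_{Gr}$ with $\Phi_{SW}$ as chains over $\ZZ/2\ZZ$ under Taubes' correspondence, which also intertwines the absolute gradings by oriented $2$-plane fields; hence the coefficient of $[\Phi_{SW}]$ equals that of $[\Phi_{Gr}]$, which is $Gr_{X,\omega}(\s)\big([\eta_1]\wedge\cdots\wedge[\eta_p]\big)$ by definition. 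This yields the claimed congruence.

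I expect the composition-law step to be the main obstacle: one must verify that the ``$\omega$-flavored'' data — Taubes' large-$r$ perturbations on $X_0$, the contact neck $\ker\lambda_\s$, and the chamber they induce when $b^2_+(X)=1$ — are compatible with Kronheimer--Mrowka's splicing/gluing apparatus, i.e.\ that as $T,r\to\infty$ the Seiberg-Witten moduli space on $X$ converges, without bubbling or reducible degeneration, to the fiber product over the finite set of irreducible monopoles on $Y$ of the two relative moduli spaces, and that this correspondence is a mod-$2$ bijection. As a check (and as an alternative derivation of $g(\s)=N[\xi_\ast]$ independent of the nonvanishing of $\Phi_\nN$), one can instead run the ECH/cobordism relative-grading formula (\cite{Gardiner:gradings}*{Theorem 5.1}) for $X_0$, using $\chi(X)=\chi(X_0)$, $\sigma(X)=\sigma(X_0)$, and~\eqref{eqn:dim}.
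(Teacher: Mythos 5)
Your overall strategy — cut $X$ along $-\partial X_0$, stretch the neck, and use the composition law of \cite{KronheimerMrowka:SWF}*{\S26} to express $SW_X(\s)$ as a pairing of $\Phi_{SW}$ against a relative count coming from $\nN$ — is the same as the paper's. But there are two genuine gaps.

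First, the crux of the argument is to identify the relative count coming from $\nN=\bigsqcup S^1\times B^3$, and your justification for this is circular. You assert that $\Phi_{S^1\times B^3}$ lies in the bottom grading $[\xi_\ast]$ and is the generator there, with the fallback argument ``it cannot vanish, since that would force $SW_X\equiv 0$, contradicting the symplectic case.'' This does not work: when $X$ is symplectic, $N=0$ and there is no $\nN$ at all, so no contradiction is available; and for $N>0$ the equivalence $SW=Gr$ is precisely what the theorem is trying to establish, so you cannot assume it to pin down $\Phi_\nN$. The paper instead proves this directly (Lemma~\ref{lem:Nmoduli}): equipping $S^1\times B^3$ with a positive-scalar-curvature metric and a small perturbation $\p_\nN$ built from a Morse function on the circle of reducibles $\TT\subset\bB(S^1\times S^2,\s_\xi+1)$, it computes the dimensions of all $M(\varnothing,S^1\times B^3,\a_i;\s)$ and $M(\varnothing,S^1\times B^3,\b_i;\s)$ by the eigenvalue-counting argument of \cite{KronheimerMrowka:SWF}*{Lemma 27.4.2, Proposition 14.6.1} (adapted from $B^4$ to $S^1\times B^3$). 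This shows $M(\varnothing,S^1\times B^3,\a_{-1};\s)$ is a single point and all other zero-dimensional summands are empty. That dimension count is what forces $g(\s)=N[\xi_\ast]$ unconditionally; your route via ``nondegenerate pairing, hence complementary gradings'' only works if you already know the pairing is nontrivial, which you don't when $SW_X(\s)=0$.

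Second, you claim the hypothesis $b^2_+(X)\ge1$ ensures ``broken limits along $Y$ avoid the (torsion) reducible locus on $S^1\times S^2$.'' The opposite is true on the $\nN$ side: with any small perturbation, every monopole on $S^1\times S^2$ is reducible (positive scalar curvature, \cite{KronheimerMrowka:SWF}*{Proposition 22.7.1}), so the breaking on that side is entirely at reducibles and must be handled in the blown-up configuration space via the boundary-(un)stable critical points $\a_i,\b_i$. This is not an optional refinement — it is exactly why the paper cannot push Taubes' large-$r$ perturbation across $\nN$ (there is no symplectic form there) and must interpolate. Relatedly, you never specify which perturbation on $Y$ your $\Phi_\nN$ and $\Phi_{SW}$ are defined with respect to; since $\Phi_{SW}$ lives in $\Cfrom^\ast(Y,\s_\xi+1,\q_\lambda)$ while the relative count from $\nN$ naturally lives in $\Cfrom_\ast(Y,\s_\xi+1,\q_f)$, you need the interpolating cylinder $\iI=[0,1]\times S^1\times S^2$ with perturbation running from $\q_\lambda$ to $\q_f$ and its induced quasi-isomorphism $\hat m$ (the paper's Figure~\ref{fig:stretch} and the chain map computation at the end of the proof). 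Without making this interpolation explicit, the displayed pairing $\langle[\Phi_{SW}],\Phi_\nN\rangle$ is not yet well-defined.

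Your closing suggestion — to independently verify $g(\s)=N[\xi_\ast]$ using the cobordism index formula of \cite{Gardiner:gradings}*{Theorem 5.1} and~\eqref{eqn:dim} — is a reasonable cross-check, but as written it is a sketch, not a substitute for the moduli-space computation that the rest of the argument actually needs.
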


Before we prove this theorem we make the following remarks. The work of Kronheimer--Mrowka (specifically, \cite{KM:book}*{Proposition 27.4.1}) recovers the Seiberg--Witten invariants of $X$ using their monopole Floer (co)homologies, explicitly by removing two copies of the 4-ball $B^4$ from $X$ and counting certain\footnote{They build a ``mixed'' map $\overrightarrow{\mathit{HM}}:\Hfrom_*(S^3)\to\Hto^*(S^3)$ and pair the image of a generator $\1\in\Hfrom_*(S^3)$ with a generator $\check\1\in\Hto^*(S^3)$. A homology orientation of the cobordism is identified with a homology orientation of $X$.} SW instantons on the resulting cobordism $S^3\to S^3$. Although not provided in \cite{KM:book}, the same result could have been obtained by removing two copies of $S^1\times B^3$, because both spaces ($B^4$ and $S^1\times B^3$) have positive scalar curvature and a unique spin-c structure extending the fixed torsion spin-c structure on their boundary (see \cite{KM:book}*{Proposition 22.7.1}).\footnote{It is important here to note that every self-diffeomorphism of $S^1\times S^2$ (and $S^3$) extends to $S^1\times B^3$ (and $B^4$).} There would necessarily be more work to do when using $S^1\times B^3$ because there exists a circle's worth of reducible monopoles on $S^1\times S^2$ (to the unperturbed Seiberg--Witten equations), compared with a single reducible monopole on $S^3$.\footnote{We must also choose a homology orientation of $S^1\times S^2$, i.e. an orientation of the vector space $H^1(S^1\times S^2;\RR)\cong\RR$, in order to identify a homology orientation of $X$ with that on the cobordism.}

In the proof of Theorem~\ref{thm:stretchNeck} we will remove $N$ copies of $S^1\times B^3$ from $X$, namely, the tubular neighborhoods of the zero-circles. This is a ``neck stretching'' argument along the contact hypersurfaces $(S^1\times S^2,\lambda_\s)$ in $X$, and we analyze the Seiberg--Witten equations under this deformation. It is important to note that on $X_0$ we can use Taubes' large perturbations to the Seiberg--Witten equations, for which there are no reducible solutions, but on each $S^1\times B^3$ we cannot do this because there is no symplectic form (or said another way, the near-symplectic form $\omega$ degenerates somewhere inside $S^1\times B^3$). We thus interpolate, on the ``neck region'' of $X$, between Taubes' large perturbations on $X_0$ and very small perturbations on each $S^1\times B^3$, where the small perturbations are chosen in such a way that we can understand the SW instantons on each $S^1\times B^3$ completely.

A final remark is that in this setup, we do not run into the usual difficulties that Kronheimer--Mrowka have when defining monopole Floer cobordism maps for cobordisms with disconnected and empty ends. These difficulties are ultimately due to the (stratified) space of reducible monopoles on the (positive and negative) boundary components of the cobordism, and are avoided in our setup thanks to Taubes' large perturbations.

\begin{proof}[Proof of Theorem~\ref{thm:stretchNeck}]
We closely follow the arguments in \cite{KM:book}*{\S26, \S27.4, \S36.1} that recover the Seiberg--Witten invariant and establish the composition law for monopole Floer (co)homology. These arguments involve judicious choices of Riemannian metrics and abstract perturbations on $X$ to ``stretch the neck'' and compare the resulting moduli spaces of SW instantons.

Let $\TT$ denote the circle $H^1(S^1\times S^2;i\RR)/H^1(S^1\times S^2;2\pi i\ZZ)\cong S^1$ which parametrizes reducible monopoles to the unperturbed Seiberg--Witten equations over $S^1\times S^2$. In fact, all monopoles are reducible because $S^1\times S^2$ has a metric of positive scalar curvature (see \cite{KM:book}*{Proposition 22.7.1}). After fixing a reference connection $\bA_0$ on $\det\SS$ so that any other Hermitian connection can be written as $\bA=\bA_0+2a$ for some $a\in\Omega^1(Y;i\RR)$, there is a retraction map
$$p:\bB(S^1\times S^2,\s_{\xi_0}+1)\to\TT$$
sending $[\bA_0+2a,\Psi]$ to the equivalence class of the harmonic part $a_\text{harm}$ of $a$ (see \cite{KM:book}*{\S11.1}).

Let $f$ be the ``height'' Morse function on $\TT$ with two critical points, and let $f_1=f\circ p$ be the corresponding function on $\bB(S^1\times S^2,\s_{\xi_0}+1)$. The gradient of $f_1$ is an abstract perturbation $\q_f:=\grad f_1$ (assumed small by re-scaling $f$), and the reducible critical points of $\grad\lL_\text{CSD}+\q_f$ are the maximum and minimum critical points $\lbrace\alpha,\beta\rbrace$ of $f$ on $\TT$. The perturbed Dirac operators associated with $\alpha$ and $\beta$ in the blow-up $\bB^\sigma(S^1\times S^2,\s_{\xi_0}+1)$ do not have kernel, but to guarantee that their spectrums are simple we add a further small perturbation to $\q_f$ (still denoted $\q_f$) which vanishes on $\bB^\text{red}(S^1\times S^2,\s_{\xi_0}+1)$. Label the corresponding critical points in $\bB^\sigma(S^1\times S^2,\s_{\xi_0}+1)$ as $\a_i$ and $\b_i$ in increasing order of the index, where $\a_0$ and $\b_0$ correspond to the first positive eigenvalues of the perturbed Dirac operator at $\alpha$ and $\beta$ (the critical points are boundary-stable for $i\ge0$ and boundary-unstable for $i<0$).

Consider a component $\nN_k\approx S^1\times B^3$ of $\nN=\bigsqcup_{k=1}^N\nN_k$, equipped with a metric having positive scalar curvature and containing a collar region of its boundary in which the metric is cylindrical. Choose a small perturbation $\p_\nN$ on $\overline{\nN_k}$ equal to $\q_f$ on the end, so that the corresponding moduli spaces $M(\varnothing,\nN_k,\a_i;\s)$ and $M(\varnothing,\nN_k,\b_i;\s)$ are regular (this is possible by \cite{KM:book}*{Proposition 24.4.7}). Here, $\s$ on $\nN_k$ is the unique spin-c structure which extends $\s_{\xi_0}+1$ on its boundary $S^1\times S^2$.

\bigskip
Let $X(T)$ be the closed manifold (diffeomorphic to $X$) obtained by attaching, for each end of $X_0$, two copies of the cylinder $[0,T]\times S^1\times S^2$ and one copy of another cylinder $[0,1]\times S^1\times S^2$ and the $k$th component $\nN_k$ of $\nN$ (see Figure~\ref{fig:stretch}):
\begin{figure}
    \centering
    \includegraphics[width=7cm]{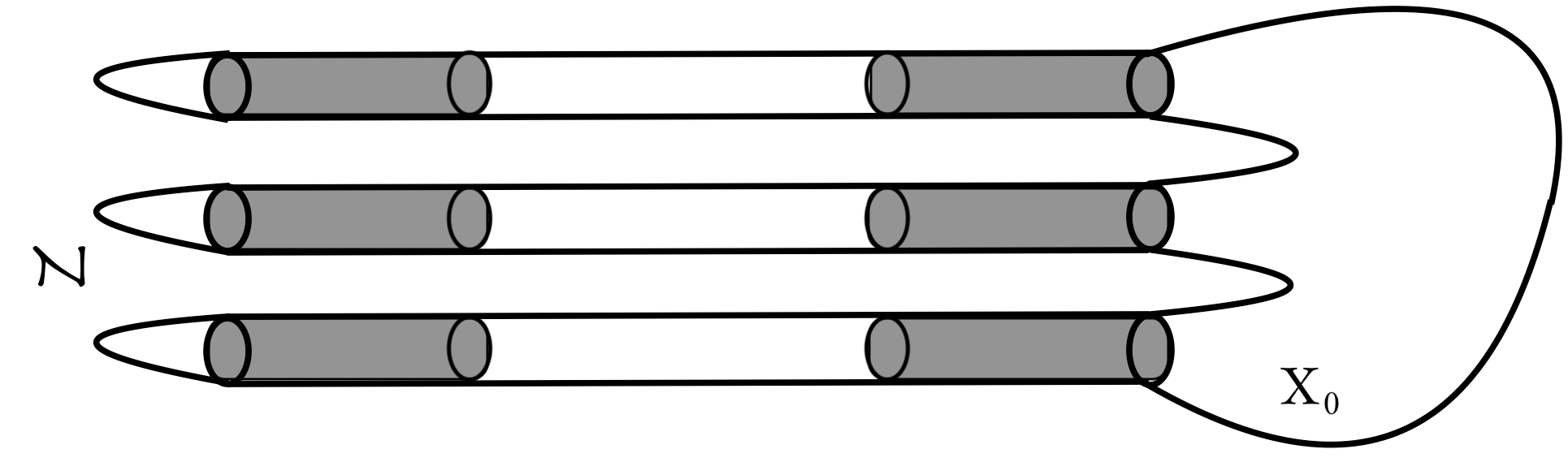}
    \caption{``Stretching the necks'' of $X$, depicted in gray}
    \label{fig:stretch}
\end{figure}
$$X(T):=\nN\cup\bigcup_{k=1}^N\Big(\left([0,T]\times S^1\times S^2\right)\cup\left([0,1]\times S^1\times S^2\right)\cup\left([0,T]\times S^1\times S^2\right)\Big)\cup X_0$$
The perturbed Seiberg--Witten equations on $X(T)$ carry the following perturbations:
\begin{itemize}
  \setlength{\itemsep}{1pt}
  \setlength{\parskip}{0pt}
  \setlength{\parsep}{0pt}
  
  \item Taubes' perturbation $\p_\omega$ on $X_0$ which extends over the adjacent copies of $[0,T]\times S^1\times S^2$ using Taubes' perturbation $\q_\lambda$ on $\partial X_0$,
 
   \item the perturbation $\p_\nN$ on each $\nN_k$ which extends over the adjacent copies of $[0,T]\times S^1\times S^2$ using the perturbation $\q_f$ on $\partial\nN_k$,
	
   \item an ``interpolating'' perturbation $\p_\text{cyl}$ on each copy of $[0,1]\times S^1\times S^2$ which agrees with $\q_\lambda$ near $\lbrace 0\rbrace\times S^1\times S^2$ and with $\q_f$ near $\lbrace 1\rbrace\times S^1\times S^2$.
\end{itemize}
To simplify notation we write $\iI_k$ for the $k$th copy of $[0,1]\times S^1\times S^2$ and $\iI:=\bigsqcup_{k=1}^N\iI_k$. Consider the moduli space $\M(X(T),\s)$ for the manifold equipped with this perturbation. As $T\in[0,\infty)$ varies, these
form a parametrized moduli space
$$\mM(X,\s):=\bigcup_{T\in[0,\infty)}\lbrace T\rbrace\times\M(X(T),\s)$$
This has a compactification
$$\mM^+(X,\s):=\bigcup_{T\in[0,\infty]}\lbrace T\rbrace\times\M(X(T),\s)$$
formed by attaching a fiber at $T=\infty$, where $\M(X(\infty),\s)$ is defined to be the set of quintuples $(\d_0,\breve\d_1,\d_2,\breve\d_3,\d_4)$ such that
\begin{align*}
\d_0 &\in M(\varnothing,\nN,\c_{i_1};\s)\\
\breve\d_1 &\in\breve M^+(\c_{i_1},\c_{i_2})\\
\d_2 &\in M(\c_{i_2},\mathcal{I},\c_{i_3};\s_{\xi_0}+1)\\
\breve\d_3 &\in\breve M^+(\c_{i_3},\c_{i_4})\\
\d_4 &\in \M(\c_{i_4},X_0,\varnothing;\s_\s)
\end{align*}
The space $\mM^+(X,\s)$ is stratified by manifolds, its codimesion-1 strata consisting of the fiber $\M(X,\s)$ over $T=0$ and those strata over $T=\infty$ with $\c_{i_1}=\c_{i_2}$ and $\c_{i_3}=\c_{i_4}$ (so that $\breve\d_1$ and $\breve\d_3$ belong to point moduli spaces). The latter strata are of the form
$$M(\varnothing,\nN,\c_1;\s)\times M(\c_1,\iI,\c_2;\s_{\xi_0}+1)\times \M(\c_2,X_0,\varnothing;\s_\s)$$
where each of $\c_1=\lbrace\c_1^k\rbrace_{1\le k\le N}$ and $\c_2=\lbrace\c_2^k\rbrace_{1\le k\le N}$ is a critical point over $S^1\times S^2$ associated with the perturbations $\q_f$ and $\q_\lambda$, respectively.

\bigskip
We now incorporate the point and loop constraints that are used to define the closed Seiberg--Witten invariant. Since the constraints $\bar z\cup\bar\eta$ sit inside $X_0\subset X$ and $X$ may be written as the composition of cobordisms\footnote{There is no ambiguity in the resulting 4-manifold because every self-diffeomorphism of $S^1\times S^2$ extends to $S^1\times B^3$.} $X_0\circ\iI\circ\nN$, we may decompose the element
$$u:=U^{\frac{1}{2}(d(\s)-p)}[\eta_1]\wedge\cdots\wedge[\eta_p]\in\AA(X)$$
as the product
$$u=R_\nN^*(1)\smile R_\iI^*(1)\smile R_{X_0}^*(u_0)$$
where each $R_{\square}:\bB^\sigma(X)\dashrightarrow\bB^\sigma(\square)$ is the partially-defined restriction map and the class $u_0\in H^{d(\s)}(\bB^\sigma(X_0);\ZZ)$ is induced by $\bar z\cup\bar\eta$. Here, the product operation is defined in \cite{KM:book}*{\S23.2, \S26.2} and given in terms of {\v C}ech cocycle representatives as follows. Fix suitable\footnote{The word ``suitable'' in this context means that the open covers are transverse (in the sense of \cite{KM:book}*{\S21.2}) to all strata in all compactified moduli spaces under consideration.} open covers $\lbrace\uU_0, \uU_\iI, \uU_\nN\rbrace$ of $\lbrace\bB^\sigma(X_0), \bB^\sigma(\iI), \bB^\sigma(\nN)\rbrace$, fix a suitable refinement $\vV$ of the open cover $[0,\infty]\times\uU_\nN\times\uU_\iI\times\uU_0$ of $[0,\infty]\times\bB^\sigma(\nN)\times\bB^\sigma(\iI)\times\bB^\sigma(X_0)$, let $\bB^\sigma(X)^o$ be the domain on which each $R_{\square}$ is defined, and let $\uU$ be the pull-back of $\vV$ under the map $\bB^\sigma(X)^o\to\lbrace0\rbrace\times\bB^\sigma(\nN)\times\bB^\sigma(\iI)\times\bB^\sigma(X_0)$. After fixing a cocycle $\fu_0(\bar z,\bar\eta)\in \check C^{d(\s)}(\uU_0;\ZZ/2\ZZ)$ which represents the class $u_0$, the pull-back of the cocycle $1\times1\times\fu_0(\bar z,\bar\eta)\in\check C^{d(\s)}(\vV;\ZZ/2\ZZ)$ is a cocycle $\fu\in\check C^{d(\s)}(\uU;\ZZ/2\ZZ)$ which represents the class $u$. We will specify the choice of $\fu_0(\bar z,\bar\eta)$ momentarily.

There is a continuous map defined in \cite{KM:book}*{\S26.1},
$$r:\mM^+(X,\s)\to[0,\infty]\times\bB^\sigma(\nN,\s)\times\bB^\sigma(\iI,\s_{\xi_0}+1)\times\bB^\sigma(X_0,\s_\s)$$
which is given by $(T,\d)\mapsto(T,\d|_\nN,\d|_\iI,\d|_{X_0})$ for $T<\infty$, in particular. The image $r\left(\mM^+(X,\s)\right)$ is also stratified by manifolds, and the only relevant strata which pair nontrivially with the cocycle $1\times1\times\fu_0(\bar z,\bar\eta)$ are determined by
\begin{align}
\label{align:N}
\dim M(\varnothing,\nN_k,\c_1^k;\s) &=0\\
\label{align:I}
\dim M(\c_1^k,\iI_k,\c_2^k;\s_{\xi_0}+1) &=0\\
\label{align:X}
\dim\M(\c_2,X_0,\varnothing;\s_\s) &=d(\s)
\end{align}
for each $k$. The dimensions of $M(\varnothing,\nN_k,\c_1^k;\s)$ are computed in Lemma~\ref{lem:Nmoduli} below, from which it follows that~\eqref{align:N} forces $\c_1^k=\a_{-1}$. Then~\eqref{align:I} forces $|\c_2^k|=|\c_1^k|$, which follows immediately from the definition of the grading (see \cite{KM:book}*{\S22.3}), or less directly from the fact that such a product cobordism induces an isomorphism on all monopole Floer (co)homologies. As explained in \cite{KM:book}*{\S36}, $\Hfrom^{[\xi_*]}(S^1\times S^2,\s_{\xi_0}+1)$ is isomorphic to $\Hto_{[\xi_*]}(S^1\times S^2,\s_{\xi_0}+1)$, with the former generated by $\a_{-1}$ and the latter generated by $\b_0$.\footnote{Both $\a_{-1}$ and $\b_0$ belong to the same absolute grading in $J(S^1\times S^2,\s_{\xi_0}+1)$ because their $\ZZ$-grading difference is $\gr[\a_{-1},\b_0]=0$ (see \cite{KM:book}*{Equation 16.9, Equation 36.1}).} Thus, each $\c_2^k$ must be one of the finitely many irreducible generators $\c\in\Cfrom^{[\xi_*]}(S^1\times S^2,\s_{\xi_0}+1,\q_\lambda)$ satisfying~\eqref{align:X}, and hence $g(\s)=N[\xi_*]$.

\begin{notation}
Here and in what follows, we abuse notation by letting $\c$ and $\a_{-1}$ denote either the respective monopoles $\c^k$ and $\a_{-1}$ on a single component of $\partial X_0$ or the respective collections $\lbrace\c^k\rbrace_{1\le k\le N}$ and $\lbrace\a_{-1}\rbrace_{1\le k\le N}$. Also, we refer to $\c$ as both a monopole and a cochain in $\Cfrom^*$ while $\hat\c$ denotes the corresponding chain in $\Cfrom_*$ which pairs nontrivially with the cochain $\c$, i.e. $\c(\hat\c)=1$.
\end{notation}

As explained in \cite{KM:book}*{\S21}, there is a $\ZZ/2\ZZ$-pairing (denoted by $\langle\cdot,\cdot\rangle$) between {\v C}ech cochains and our moduli spaces. The version of Stokes' theorem in \cite{KM:book}*{\S21}, applied to the $\ZZ/2\ZZ$-pairing of $r\left(\mM^+(X,\s)\right)$ with $\delta(1\times1\times\fu_0(\bar z,\bar\eta))=0$, implies
\begin{equation}
\label{eqn:SW}
\begin{split}
SW_X(\s)\big([\eta_1]\wedge\cdots\wedge[\eta_p]\big) &= \big\langle u,[\M(\s)]\big\rangle = \big\langle \fu,\M(\s)\big\rangle\\
&= \sum_{\c\in N[\xi_*]}\big\langle 1,M(\a_{-1},\iI,\c;\s_{\xi_0}+1)\big\rangle\cdot\big\langle \fu_0(\bar z,\bar\eta),\M(\c,X_0,\varnothing;\s_\s)\big\rangle\\
&=: \sum_{\c\in N[\xi_*]}\MM_\c\M_\c
\end{split}
\end{equation}
where we use the fact (Lemma~\ref{lem:Nmoduli} below) that the moduli space $M(\varnothing,\nN_k,\a_{-1};\s)$ is a single point. Here, $\MM_\c$ is the count of points in the 0-dimensional moduli space $M(\a_{-1},\iI,\c;\s_{\xi_0}+1)$, while $\M_\c$ is computed by specifying the choice of $\fu_0(\bar z,\bar\eta)$. As explained in \cite{HFHM5}*{\S2.5.4}, there is a choice such that $\M_\c$ is the count of points in $\M_{d(\s)}(\c,X_0,\varnothing;\s_\s,\bar z,\bar\eta)$, i.e. the number defined by~\eqref{eq:SWcoeff} (note that by Lemma~\ref{lem:SWtoGr}, $\c$ satisfies $E(\c)<2\pi\rho_\s$ and hence corresponds to an admissible orbit set).

\bigskip
We now reinterpret the sum in~\eqref{eqn:SW} to see that it equals the near-symplectic Gromov invariant. The $k$th monopole Floer chain complex in grading $[\xi_*]$ is $\Cfrom_{[\xi_*]}(S^1\times S^2,\s_{\xi_0}+1,\q_f)=\ZZ/2\ZZ\langle\hat\a_{-1}\rangle$. The cobordism $\iI_k$ induces a chain map
$$\hat m_k:\Cfrom_*(S^1\times S^2,\s_{\xi_0}+1,\q_f)\to\Cfrom_*(S^1\times S^2,\s_{\xi_0}+1,\q_\lambda)$$
which is a quasi-isomorphism \cite{KM:book}*{Corollary 23.1.6}, and in grading $[\xi_*]$ it is given by 
$$\hat\a_{-1}\mapsto \sum_{\c^k\in[\xi_*]}\MM_{\c^k}\hat\c^k$$
where $\MM_{\c^k}$ denotes the count of points in the 0-dimensional moduli space $M(\a_{-1},\iI_k,\c^k;\s_{\xi_0}+1)$. Thus, $\iI$ induces the chain map
$$\hat m=\bigotimes^N_{k=1}\hat m_k:\bigotimes^N_{k=1}\Cfrom_*(S^1\times S^2,\s_{\xi_0}+1,\q_f)\to\bigotimes^N_{k=1}\Cfrom_*(S^1\times S^2,\s_{\xi_0}+1,\q_\lambda)$$
which is also a quasi-isomorphism, and in grading $N[\xi_*]$ it sends $\hat\a_{-1}$ to $\sum_{\c\in N[\xi_*]}\MM_\c\hat\c$ because $\MM_\c=\prod_{k=1}^N\MM_{\c^k}$. Since $[\hat m(\hat\a_{-1})]$ is the generator of $\bigotimes^N_{k=1}\Hfrom_{[\xi_*]}(S^1\times S^2,\s_{\xi_0}+1)\cong\ZZ/2\ZZ$, the near-symplectic Gromov invariant is the evaluation of the Seiberg--Witten cocycle $\Phi_{SW}\in\bigotimes^N_{k=1}\Cfrom^{[\xi_*]}(S^1\times S^2,\s_{\xi_0}+1,\q_\lambda)$ at the cycle $\hat m(\hat\a_{-1})$,
$$Gr_{X,\omega}(\s)\big([\eta_1]\wedge\cdots\wedge[\eta_p]\big)\equiv_{(2)}\Phi_{SW}\left(\sum_{\c\in N[\xi_*]}\MM_\c\hat\c\right)=\sum_{\c\in N[\xi_*]}\MM_\c\M_\c$$
This number is precisely that in~\eqref{eqn:SW}, so the proof is complete.
\end{proof}

\begin{lemma}
\label{lem:Nmoduli}
For sufficiently small perturbations $\p_\nN$ and $\q_f$ specified in the proof of Theorem~\ref{thm:stretchNeck}, the moduli spaces $M(\varnothing,S^1\times B^3,\a_i;\s)$ and $M(\varnothing,S^1\times B^3,\b_i;\s)$ are empty for $i\ge0$. The moduli space $M(\varnothing,S^1\times B^3,\a_{-i};\s)$ has dimension $2i-2$ for $i\ge1$, such that $M(\varnothing,S^1\times B^3,\a_{-1};\s)$ is a point, and the moduli space $M(\varnothing,S^1\times B^3,\b_{-i};\s)$ has dimension $2i-1$ for $i\ge1$.
\end{lemma}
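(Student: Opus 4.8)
The plan is to reduce everything to \emph{reducible} solutions and then read the moduli dimensions off a standard index computation, closely following the reducible analysis of Kronheimer--Mrowka in \cite{KronheimerMrowka:SWF}*{\S 26, \S 27.4, \S 36.1}. Write $\nN_k\approx S^1\times B^3$ for the cap, equipped with its positive scalar curvature metric and collar, and let $\overline{\nN_k}$ be its completion by the cylindrical end $[0,\infty)\times S^1\times S^2$. First I would verify that for sufficiently small $\p_\nN$ and $\q_f$ every critical point on $S^1\times S^2$ and every solution on $\overline{\nN_k}$ is reducible: on $S^1\times S^2$ this is \cite{KronheimerMrowka:SWF}*{Proposition 22.7.1}, using the positive scalar curvature metric, and on $\overline{\nN_k}$ it follows from the Bochner--Weitzenb\"ock identity~\eqref{Weitzenbock} together with the maximum principle, exactly as in the proof of Lemma~\ref{lem:estimates}: since $R_g>0$ on $\nN_k$ and $F_\bA^+$ is $O(\p_\nN)$, no nonzero $\Psi$ decaying on the cylindrical end can occur.

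Next I would describe the reducible moduli space. A reducible solution on $\overline{\nN_k}$ asymptotic to a reducible critical point $\c$ of $\grad\lL_{\mathrm{CSD}}+\q_f$ is a connection $\bA$ on $\det\SS_+$, with $F^+_\bA$ equal to the perturbation two-form and asymptotic to $\bA_\c$, together with a point of $\mathbb{P}(\ker D_\bA)$, the kernel being taken with the APS-type boundary condition dictated by the sign of the Dirac eigenvalue that labels $\c$. Because $H^2_+(\nN_k)=0$ and the restriction $H^1(\nN_k;\RR)\to H^1(S^1\times S^2;\RR)$ is an isomorphism, the connection factor is cut out transversally and is identified with the stable manifold of $\c$ under the gradient-descent flow of $f$ inside the circle $\TT$ of reducibles; this manifold has dimension $0$ when $\c$ lies over the maximum $\alpha$ of $f$ and dimension $1$ when $\c$ lies over the minimum $\beta$. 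As $\bA$ ranges over this contractible factor, $\ker D_\bA$ sweeps out a complex vector bundle, so the moduli space is diffeomorphic to (that stable manifold) $\times\,\mathbb{P}^{m-1}$ with $m:=\dim_\CC\ker D_\bA$.

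It then remains to compute $m$. If $\c=\a_i$ or $\b_i$ with $i\ge0$ (boundary-stable, positive Dirac eigenvalue) the asymptotic condition forces $\Psi$ to decay, so the Weitzenb\"ock argument gives $m=0$ and the moduli space is empty. If $\c=\a_{-i}$ or $\b_{-i}$ with $i\ge1$ (boundary-unstable), positive scalar curvature kills the cokernel of $D_\bA$, whence $m=\operatorname{ind}_\CC D_\bA$, and the APS index theorem on $\overline{\nN_k}$ evaluates this index to $i$: the interior term $\tfrac18\big(c_1(\s)^2-\sigma(\nN_k)\big)$ vanishes because $\nN_k$ is parallelizable and $c_1(\s)$ is torsion, the $\eta$-invariant of the boundary Dirac operator is constant over $\TT$ since positive scalar curvature forbids zero eigenvalues, and admitting the first $i$ negative eigenmodes in the boundary condition raises the index by exactly $i$. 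Combining with the preceding paragraph, $M(\varnothing,\nN_k,\a_{-i})$ has dimension $0+(2i-2)=2i-2$ --- in particular $M(\varnothing,\nN_k,\a_{-1})=\mathbb{P}^0$ is a single point --- while $M(\varnothing,\nN_k,\b_{-i})$ has dimension $1+(2i-2)=2i-1$. This is consistent with $\gr[\a_{-1},\b_0]=0$ and with the $B^4$-on-$S^3$ reducible computation of \cite{KronheimerMrowka:SWF}*{\S 27.4}, of which the $\a$-tower above is a copy.

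The hard part will be the precise index and regularity bookkeeping for the reducibles: pinning down $\operatorname{ind}_\CC D_\bA$ on the cap --- in particular checking that the interior and $\eta$ contributions assemble to $0$, so the dimension is governed purely by the eigenmode data --- verifying that the connection factor and the projective factor $\mathbb{P}^{m-1}$ decouple as a fibre bundle over a contractible base, and arranging transversality for generic small $\q_f$. All of this is a routine adaptation of \cite{KronheimerMrowka:SWF}*{\S 27, \S 36}; the one genuinely new feature is the circle $\TT$ of reducibles, whose two Morse critical points produce the two towers $\a_\bullet$ and $\b_\bullet$, with dimensions differing by one.
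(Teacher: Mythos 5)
Your proposal matches the paper's strategy in broad strokes—reduce to reducibles via positive scalar curvature, view the two Morse critical points on $\TT\cong S^1$ as producing the $\a$- and $\b$-towers, and count Dirac eigenmodes—but it takes a more explicit (and in places less justified) route than the paper. The paper never asserts a product structure $(\text{connection factor})\times\mathbb{P}^{m-1}$: for the $\b$-tower it computes the dimension \emph{indirectly}, using the formal dimension formula of \cite{KronheimerMrowka:SWF}*{Proposition 24.4.6}, $\dim M(\varnothing,S^1\times B^3,\b_{-i};\s)=\dim M(\varnothing,S^1\times B^3,\a_{-i};\s)+\gr[\a_{-i},\b_{-i}]$ with $\gr[\a_{-i},\b_{-i}]=\ind_f(\alpha)-\ind_f(\beta)=1$. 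This sidesteps the fibre-bundle verification you flag, since $\ker D_\bA$ can a priori jump as $\bA$ sweeps over $\TT\setminus\lbrace\alpha\rbrace$. For the $\a$-tower, instead of APS the paper invokes the asymptotic-expansion argument of \cite{KronheimerMrowka:SWF}*{Proposition 14.6.1}, exhibiting solutions of the form $\sum_{k=-i}^{-1}c_ke^{-\lambda_k t}\psi_k$ with $i$ complex parameters, which after projectivizing gives $2(i-1)$ directly. And for emptiness of $M(\varnothing,S^1\times B^3,\b_i;\s)$ with $i\ge0$, the paper cites \cite{KronheimerMrowka:SWF}*{Proposition 24.4.3} (boundary-stable endpoint, no irreducibles) rather than rerunning the maximum-principle argument.

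There is one genuine gap in your APS bookkeeping. You argue that the interior term of the index formula vanishes and that the $\eta$-invariant is \emph{constant} over $\TT$ (since positive scalar curvature forbids spectral flow), but constant is not zero; those two facts alone do not give $\op{ind}_\CC D_\bA=i$. What is needed is that the $L^2$-index, with the standard APS condition admitting no negative eigenmodes, is zero—only then does ``admitting the first $i$ negative modes raises the index by $i$'' close the computation. The missing step is that positive scalar curvature together with the Weitzenb\"ock formula~\eqref{Weitzenbock} kills both the $L^2$-kernel and the $L^2$-cokernel of $D_\bA$ on $\overline{\nN_k}$, forcing the $L^2$-index to vanish; you invoke the cokernel vanishing but never invoke the kernel vanishing at the point where it is needed. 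The paper's route via the asymptotic expansion of the spinor avoids the $\eta$-invariant entirely and does not confront this issue.
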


\begin{proof}
We mimic the analogous proof for a 4-ball $B^4$ in \cite{KM:book}*{Lemma 27.4.2}. The key point is that both $S^1\times B^3$ and $B^4$ have metrics of positive scalar curvature and have trivial 2nd (co)homology. We have already discussed the $\q_f^\sigma$-perturbed SW monopoles over $(S^1\times S^2,\s_{\xi_0}+1)$ at the beginning of Theorem~\ref{thm:stretchNeck}, and we continue to use that notation.

With respect to the unperturbed Seiberg--Witten equations (without blowing up) over $\overline{S^1\times B^3}$, there are no solutions with nonzero spinor $\Phi$ that decay to zero on the cylindrical end $[0,\infty)\times S^1\times S^2$ because the scalar curvature is positive (see the integration-by-parts trick of \cite{KM:book}*{Proposition 4.6.1}). For $\p_\nN$ sufficiently small the reducible solutions persist and are asymptotic to $\alpha=[\bA_\alpha,0]$ or $\beta=[\bA_\beta,0]$.

Now, the restriction of a reducible solution $\d\in M(\varnothing,S^1\times B^3,\a_i;\s)$ to the cylindrical end is a path
$$\check\d(t)=[\bA_\a,0,\psi(t)]$$
with $\psi(t)$ approaching the $S^1$-orbit of $\psi_i$ as $t\to\infty$, where $\a_i=[\bA_\alpha,0,\psi_i]$. Following the argument of \cite{KM:book}*{Proposition 14.6.1}, we would then obtain a nonzero solution to the perturbed Dirac equation on $\overline{S^1\times B^3}$ with asymptotics $Ce^{-\lambda_it}\psi_i$ (nonzero constant $C$) as $t\to\infty$ on the cylindrical end. If $\lambda_i>0$ then we just argued that such spinors cannot exist, so $M(\varnothing,S^1\times B^3,\a_i;\s)$ is empty. If $\lambda_{-i}<0$ then, as in \cite{KM:book}*{Proposition 14.6.1}, such spinors with growth bound $Ce^{-\lambda_{-i}t}$ have the form
$$\sum_{k=-i}^{-1}c_ke^{-\lambda_kt}\psi_k$$
on the cylindrical end. Thus, $\dim M(\varnothing,S^1\times B^3,\a_{-i};\s)=2(i-1)$ and $M(\varnothing,S^1\times B^3,\a_{-1};\s)$ is a single point. 

On the other hand, the restriction of a reducible solution $\d\in M(\varnothing,S^1\times B^3,\b_i;\s)$ to the cylindrical end is a path
$$\check\d(t)=[\bA(t),0,\psi(t)]$$
with $\bA(t)$ a trajectory lying over a Morse flowline of $f$ on $\TT\subset\bB(S^1\times S^2,\s_{\xi_0}+1)$ and asymptotic to $\bA_\beta$. We compute $\dim M(\varnothing,S^1\times B^3,\b_{-i};\s)$ for $i\ge1$ indirectly, thanks to the formal dimension formula
$$\dim M(\varnothing,S^1\times B^3,\b_{-i};\s)=\dim M(\varnothing,S^1\times B^3,\a_{-i};\s)+\gr[\a_{-i},\b_{-i}]$$
given by \cite{KM:book}*{Proposition 24.4.6}. Since $\gr[\a_{-i},\b_{-i}]=\ind_f(\alpha)-\ind_f(\beta)=1$, the dimension of $M(\varnothing,S^1\times B^3,\b_{-i};\s)$ must be $2(i-1)+1$. Finally, by \cite{KM:book}*{Proposition 24.4.3} we see that $M(\varnothing,S^1\times B^3,\b_i;\s)$ must be empty for $i\ge0$ because $\b_i$ is boundary-stable and there are no irreducible $\p_\nN$-perturbed SW instantons over $S^1\times B^3$.
\end{proof}

\begin{remark}
The following heuristic was suggested by Mrowka and can be made precise. The holonomy map
$$\operatorname{hol}:\M(X,\s)\to \prod_{k=1}^NU(1)$$
along the $N$ zero-circles of $\omega$ is cobordant (in some sense) to the restriction map
$$\operatorname{res}:M(X_0,\s_\s)\to\prod_{k=1}^N \bB^\text{red}(S^1\times S^2,\s_{\xi_0}+1)$$
where $M(X_0,\s_\s)$ is the moduli space of Seiberg--Witten solutions to the unperturbed equations on $X_0$ without blowing up. Here, $\bB^\text{red}(S^1\times S^2,\s_{\xi_0}+1)$ is identified with $U(1)$ by taking the holonomy of a flat connection along the $S^1$-factor of $S^1\times S^2$.
\end{remark}

\subsection*{Acknowledgements}

\indent\indent
The author is indebted to his advisor Michael Hutchings as well as Clifford Taubes -- their roles in this work are evident, and their guidance pivotal. The author also thanks Tomasz Mrowka for helping to understand certain aspects of monopole Floer homology, and the anonymous referee for suggesting many clarifications/corrections to the original draft of this paper. This paper forms part of the author's Ph.D. thesis. The author was partially supported by NSF grants DMS-1406312, DMS-1344991, DMS-0943745, and DMS-1708899.

\appendix
\section{Appendix: some explanation to Taubes' analysis}
\label{Appendix}

\indent\indent
This paper can be viewed as a sort of amalgam of \cite{Taubes:ECH=SWF1} and \cite{Taubes:SWGrBook}. There was a complicated feature of \cite{Taubes:SWGrBook} that did not arise in \cite{Taubes:ECH=SWF1}, and a complicated feature of \cite{Taubes:ECH=SWF1} that did not arise in \cite{Taubes:SWGrBook}, and both features appeared in this paper. Namely, the multiply covered tori in \cite{Taubes:SWGrBook} had to be delicately counted and the map to the Seiberg--Witten moduli space required the use of Kuranishi structures, and $\RR$-invariance in \cite{Taubes:ECH=SWF1} played a complicating role for the analysis associated with non-$\RR$-invariant holomorphic curves. This latter complication came from the existence of multiple ends of a curve hitting the same orbit, or a single end hitting an orbit with multiplicity. An elaboration is given in \cite{Taubes:ECH=SWF1}*{\S 5.c.1}, and a slightly different elaboration is given below.

In the compact scenario, in order to build a Seiberg--Witten solution from a pseudoholomorphic curve $C$ with multiplicity $n$, an $n$-vortex solution is ``grafted'' into the normal bundle $N_C$, and a disk-subbundle of $N_C$ is embedded into the ambient 4-manifold. In the noncompact scenario, if we were to mimic the previous sentence for a pseudoholomorphic curve $C$ with $n>1$ ends approaching a single orbit $\gamma$, then a 1-vortex solution would be ``grafted'' into $N_C$ for each end of $C$. But to ensure embeddedness of a disk-subbundle the radii of its disk-fibers would need to shrink as $\gamma$ is approached. Subsequently, the Dirac operator evaluated at the corresponding spinor would involve derivatives of the radial coordinate of the fibers, and this would ultimate prevent us from getting the appropriate bounds on the spinor (as needed to obtain a nearby Seiberg--Witten solution).

To resolve this issue, a 1-vortex solution is not ``grafted'' into $N_C$ for each of its $n$ ends. Instead, consider the normal bundle $N_{\RR\times\gamma}$ of the cylinder $\RR\times\gamma$. Then $N_C$ and $N_{\RR\times\gamma}$ are ``nearby'' to each other along the ends of $C$ and $\RR\times\gamma$, and objects defined on them can be compared using cutoff-functions and a change of variables. The (ends of the) curve $C$ intersects any given fiber of $N_{\RR\times\gamma}$ in $n$ points, and an $n$-vortex solution is ``grafted'' into each fiber of $N_{\RR\times\gamma}$ whose zeros are those $n$ points. This solution is then compared to the would-be solution from the original approach, and is seen to be approximately the same except for the worry of varying radial coordinates.

The methodology of the previous paragraph is inspired by the following fact in vortex theory: Given two 1-vortices spaced far apart in $\RR^2$ and one 2-vortex in $\RR^2$ whose zeros are located at the two 1-vortices, the difference between the pair of 1-vortices and the single 2-vortex is exponentially small with respect to the distance between the two 1-vortices. See Figure~\ref{fig:vortex} for a qualitative visual.

\begin{figure}
    \centering
    \includegraphics[width=3cm]{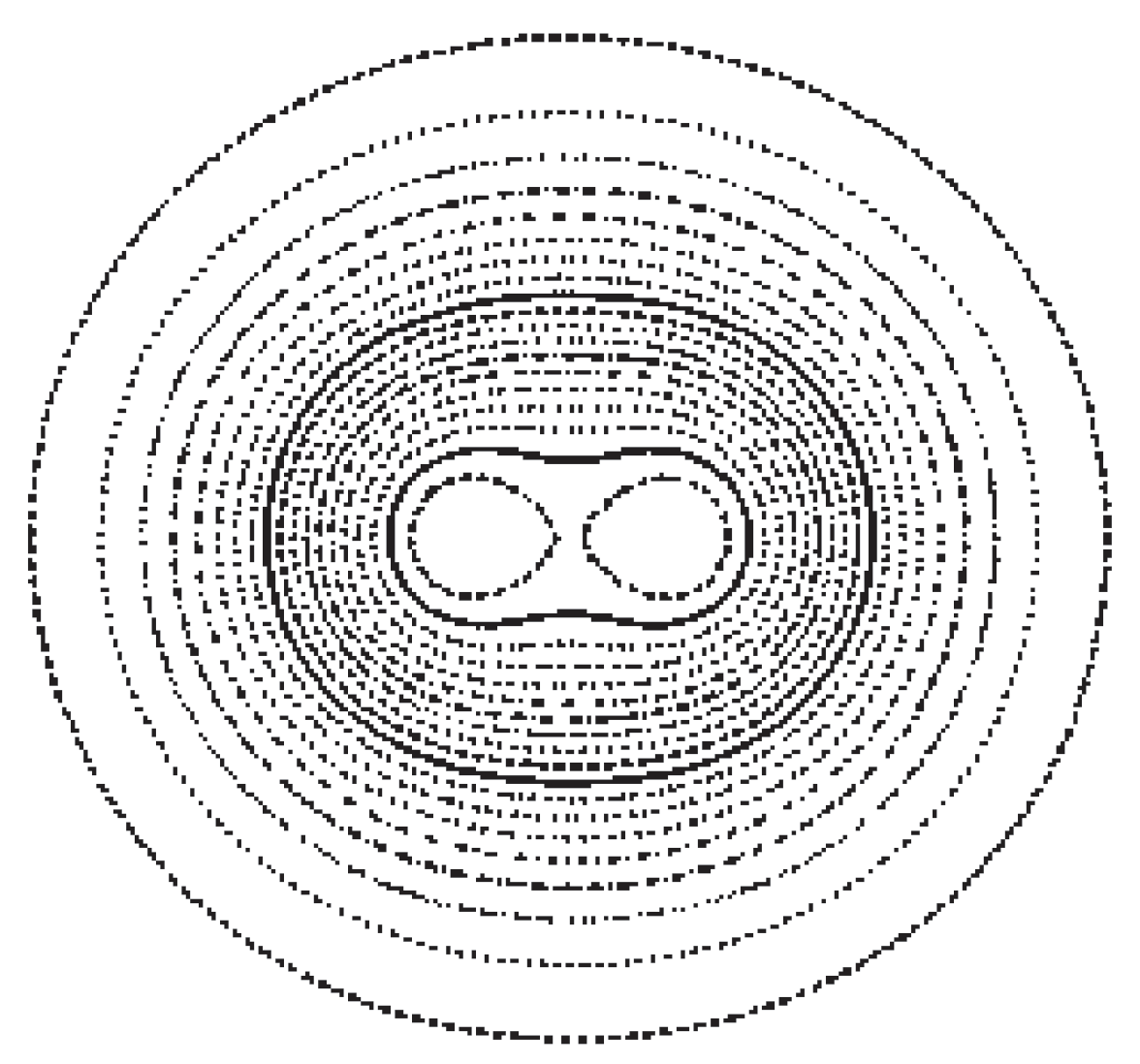}\includegraphics[width=3.5cm]{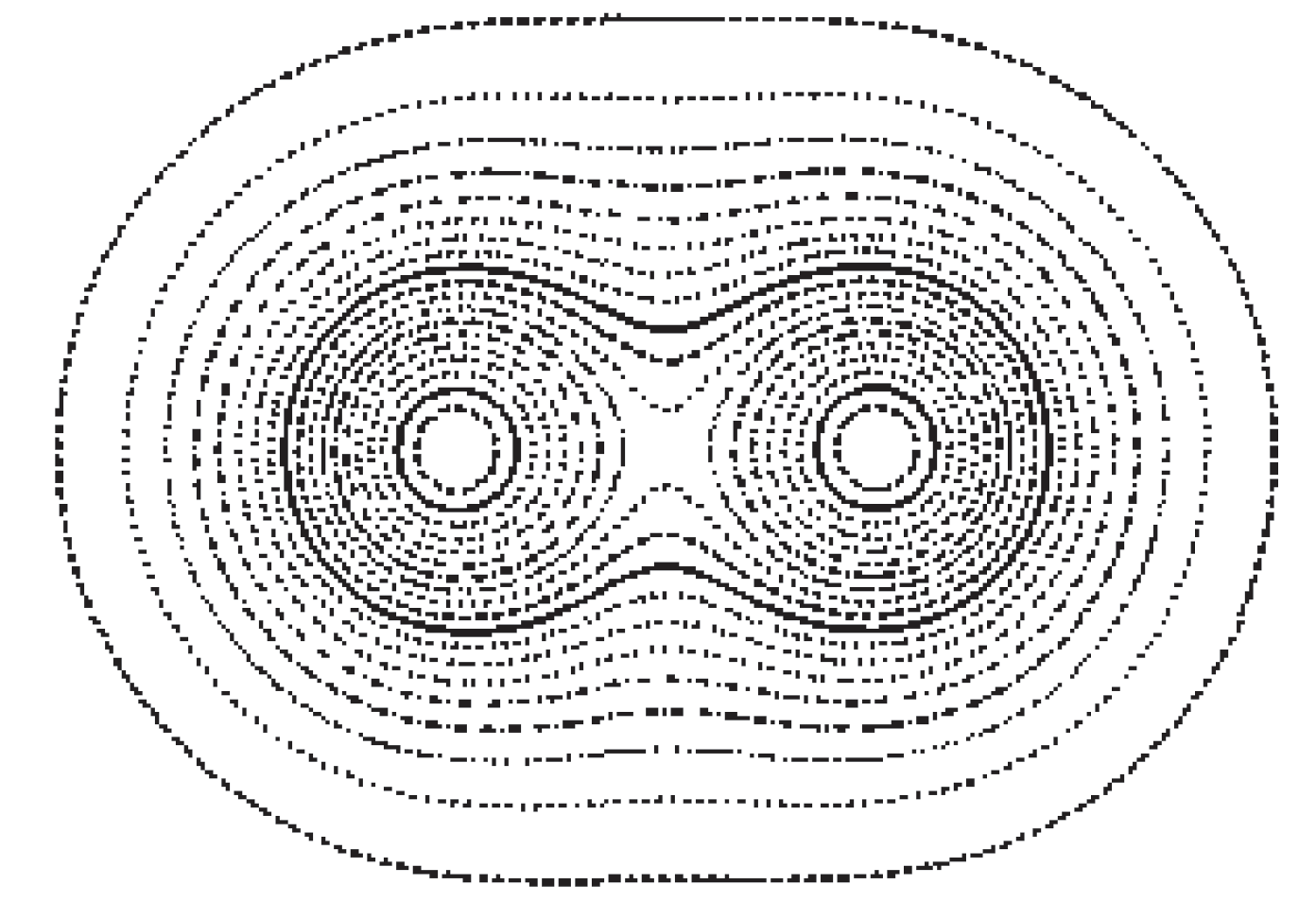}
    \caption{Zeros of a single 2-vortex with different separations (Adapted from \cite{MantonSutcliffe})}
    \label{fig:vortex}
\end{figure}

\begin{bibdiv}
\begin{biblist}
\bibselect{PhDsw}
\end{biblist}
\end{bibdiv}

\end{document}